\newcommand{\N}{\mathbb{N}}
\newcommand{\Z}{\mathbb{Z}}
\newcommand{\R}{\mathbb{R}}
\newcommand{\C}{\mathbb{C}}
\newcommand{\subsetc}{\subset_{\operatorname{comp}}}
\newcommand{\dx}{{\rm d}x }
\newcommand{\dy}{{\rm d}y }
\newcommand{\dt}{{\rm d}t }
\newcommand{\deta}{{\rm d}\eta }
\newcommand{\dxi}{{\rm d}\xi }
\newcommand{\dlambda}{{\rm d}\lambda }
\newcommand{\supp}{\operatorname{supp}}
\newtheorem{theorem}{Theorem}[section]
\newtheorem{proposition}[theorem]{Proposition}
\newtheorem{lemma}[theorem]{Lemma}
\newtheorem{corollary}[theorem]{Corollary}
\theoremstyle{definition}
\theoremstyle{remark}
\newtheorem{remark}[theorem]{Remark}
\numberwithin{equation}{section}
\DeclareRobustCommand\widecheck[1]{{\mathpalette\@widecheck{#1}}}
\def\@widecheck#1#2{%
    \setbox\z@\hbox{\m@th$#1#2$}%
    \setbox\tw@\hbox{\m@th$#1%
       \widehat{%
          \vrule\@width\z@\@height\ht\z@
          \vrule\@height\z@\@width\wd\z@}$}%
    \dp\tw@-\ht\z@
    \@tempdima\ht\z@ \advance\@tempdima2\ht\tw@ \divide\@tempdima\thr@@
    \setbox\tw@\hbox{%
       \raise\@tempdima\hbox{\scalebox{1}[-1]{\lower\@tempdima\box
\tw@}}}%
    {\ooalign{\box\tw@ \cr \box\z@}}}
\tikzset{join/.code=\tikzset{after node path={%
\ifx\tikzchainprevious\pgfutil@empty\else(\tikzchainprevious)%
edge[every join]#1(\tikzchaincurrent)\fi}}}
\tikzset{>=stealth',every on chain/.append style={join},
         every join/.style={->}}
\tikzstyle{labeled}=[execute at begin node=$\scriptstyle,
\begin{document}

\title[Boundary values of harmonic functions]{Quasianalytic functionals and ultradistributions as boundary values of harmonic functions}

\author[A. Debrouwere]{Andreas Debrouwere}
\thanks{A. Debrouwere was supported by  FWO-Vlaanderen through the postdoctoral grant 12T0519N}
\address{A. Debrouwere, Department of Mathematics and Data Science \\ Vrije Universiteit Brussel, Belgium\\ Pleinlaan 2 \\ 1050 Brussels \\ Belgium}
\email{Andreas.Debrouwere@vub.be}

\author[J. Vindas]{Jasson Vindas}
\thanks {J. Vindas was supported by Ghent University through the BOF-grants 01J11615 and 01J04017.}
\address{J. Vindas, Department of Mathematics: Analysis, Logic and Discrete Mathematics\\ Ghent University\\ Krijgslaan 281\\ 9000 Ghent\\ Belgium}
\email{jasson.vindas@UGent.be}

\subjclass[2010]{\emph{Primary.} 31B25, 46F15, 46F05.  \emph{Secondary.} 46F20.}
\keywords{Quasianalytic functionals; Ultradistributions; Boundary values of harmonic functions; Ultradifferentiable classes; Almost harmonic functions; H\"{o}rmander's support theorem.}
\begin{abstract}
We study boundary values of harmonic functions in spaces of quasianalytic functionals and spaces of  ultradistributions of  non-quasianalytic type. As an application, we provide a new approach to H\"ormander's support theorem for quasianalytic functionals.
Our main technical tool is a description of ultradifferentiable functions by almost harmonic functions, a concept that we introduce in this article. We work in the setting of ultradifferentiable classes defined via weight matrices. In particular, our results simultaneously apply to the two standard classes defined via weight sequences 
and via  weight functions.
\end{abstract}
\maketitle
\section{Introduction}

The representation of  functions and linear functionals as boundary values of harmonic functions is an important and useful idea in functional analysis.
For analytic functionals such a representation follows e.g.\ from Bengel's work \cite{Bengel1967} (see also \cite{Grothendieck}) on formal boundary values of zero solutions of elliptic operators and leads to an elementary proof of the support theorem for analytic functionals. Moreover, it may be used to  develop a harmonic function approach to the theory of hyperfunctions in several variables, which is reminiscent of the simple one variable theory. We refer to \cite{Hormander, Komatsu1991, Komatsu2, Schapira} for more information on this subject. In \cite{Komatsu1991}  Komatsu studied boundary values of harmonic functions in ultradistribution spaces of non-quasianalytic type (see \cite{Langenburch} for the distribution case).
  
The main goal of this article is to enhance these  results by developing a theory of boundary values of harmonic functions in spaces of \emph{quasianalytic functionals} (= compactly supported quasianalytic ultradistributions) \cite{Hormander85}. Our ideas give rise to a new  approach to the support theorem for quasianalytic functionals, originally shown by H\"ormander in \cite{Hormander85} (see also \cite{Heinrich}). H\"{or}mander's proof of this result
is quite involved. We believe that the approach given here is conceptually simpler, supplying a description of the support (= minimal carrier) of a quasianalytic functional in terms of the harmonic continuation properties of its Poisson transform. Furthermore, in the non-quasianalytic case, we obtain alternative proofs of Komatsu's results \cite{Komatsu1991}. Our method allows us to work under much weaker assumptions on the defining weight sequence (see Remark \ref{comparison} for details).

A simple but powerful method to ensure the existence of (ultra)distributional boundary values of holomorphic functions consists in combining Stokes' theorem (more precisely, the formula \cite[Equation (3.1.9), p.\ 62]{Hormander}) with the notion of almost analytic extensions. This technique was used for the first time by H\"ormander \cite[p.\ 64]{Hormander} for distributions and was later extended to the ultradistributional setting by Petzsche and Vogt \cite{P-V} (see also \cite{Petzsche1984}). We mention that the characterization of Denjoy-Carleman classes by almost analytic extensions goes back to Dyn'kin \cite{Dynkin1980, Dynkin1993}. We refer to the recent article \cite{R-S19} for the newest generalizations of such results and an overview of the topic of almost analytic extensions. Here we develop a similar method to establish the existence of ultradistributional boundary values of harmonic functions. Our method combines  Green's theorem with a novel description of ultradifferentiable functions by so-called \emph{almost harmonic functions}. Therefore, the first part of this article is devoted to an almost harmonic function characterization of  ultradifferentiable classes. 

We work with the notion of ultradifferentiability
defined via weight matrices, as introduced in \cite{R-S15}. This leads to a unified treatment of ultradifferentiable classes defined via weight sequences \cite{Komatsu} (Denjoy-Carleman approach) and via weight functions  \cite{B-M-T} (Braun-Meise-Taylor approach), but also comprises other spaces, e.g., the union and intersection of all Gevrey spaces. We point out that we shall infer the weight function case from the weight matrix case by employing the method from \cite{R-S19}, which is based upon results from \cite{R-S15,R-S16,R-S18}.

Finally, we fix some notation. Let $\Omega \subseteq \R^d$ be open. We write $K \subsetc \Omega$ to indicate that $K$ is a compact subset of $\Omega$. The notation $\Theta \Subset \Omega$ means that $\Theta$ is a relatively compact open subset of $\Omega$.  We write $\mathcal{H}(\Omega)$ for the space of harmonic functions on $\Omega$ and endow it with the compact-open topology. Points of $\R^{d+1} = \R^d \times \R$ will be denoted by $(x,y) = (x_1, \ldots, x_d,y)$. We will often identify $\R^d$ with the subspace $\R^d \times \{0\}$ of $\R^{d+1}$. If $V \subseteq \R^{d+1}$ is open and symmetric with respect to $y$, we write $\mathcal{H}_-(V)$ for the space of harmonic functions in $V$ that are odd with respect to $y$. 

\section{Ultradifferentiable classes}\label{sect-uc}
\subsection{Denjoy-Carleman classes}  Let $M = (M_p)_{p \in \N}$ be a sequence of positive numbers. We set $m_p = M_p / M_{p -1}$, $p \in \Z_+$.
Furthermore, we define $M^* = (M_p/p!)_{p \in \N}$ and $m^*_p =  M^*_p / M^*_{p -1} = m_p/p$, $p \in \Z_+$.  We will make use of the following conditions on a positive sequence $M$:
\begin{itemize}
\item [$(M.1)\ \: $]   $(m_p)_{p \in \Z_+}$ is increasing;
\item [$(M.1)^{*}\: $]  $(m^*_p)_{p \in \Z_+}$ is increasing;
\item [$(M.1)^{*}_{\mathrm{w}}$] $(m^*_p)_{p \in \Z_+}$ is almost increasing, i.e., $m^*_q \leq C m^*_p$, 
$q \leq p$, for some $C > 0$;
\item [$(M.2)'\ $] $M_{p+1}\leq C H^{p} M_p$, $p\in\mathbb{N}$, for some $C,H > 0$.
\end{itemize}
We refer to  \cite{Komatsu} for the meaning of the standard conditions $(M.1)$, $(M.2)'$, and $(M.1)^{*}$. Condition $(M.1)^{*}_{\mathrm{w}}$ is inspired by \cite[Lemma 8]{R-S18}. A sequence $M$  of positive numbers is called a \emph{weight sequence} if $M_0  = 1$, $\lim_{p \to \infty} m_p = \infty$, and $M$ satisfies $(M.1)$. 
A weight sequence $M$ is called $\emph{non-quasianalytic}$ if
$$\displaystyle \sum_{p=1}^{\infty}\frac{1}{m_{p}}<\infty$$
and $\emph{quasianalytic}$ otherwise. 
 
The relation $N\subset M$ between two weight sequences $M$ and $N$ means that there are $C,H>0$ such that 
$N_{p}\leq CH^{p}M_p,$ $p\in\mathbb{N}$. The stronger relation $N\prec M$ means that the latter inequality remains valid for every $H>0$ and a suitable $C=C_H>0$.  We write $N \approx M$ if both $N \subset M$ and $M \subset N$ hold. 

We shall also use the following condition on a weight sequence $M$:
\begin{itemize}
 \item[($NA)$] $p! \prec M$. 
\end{itemize}
Each non-quasianalytic weight sequence satisfies $(NA)$.

The \emph{associated function} of a positive sequence $M$ is defined as
$$
\omega_M(t):=\sup_{p\in\mathbb{N}}\log\frac{t^pM_0}{M_p},\qquad t \geq 0.
$$

Let $M$ be a weight sequence  and  let $\Omega \subseteq \R^d$ be open. For $h > 0$ we write $\mathcal{B}^{M,h}(\Omega)$ for the Banach space consisting of all $\varphi \in C^\infty(\Omega)$ such that
$$
\| \varphi \|_{\mathcal{B}^{M,h}(\Omega)} := \sup_{\alpha \in \N^d}\sup_{x \in \Omega} \frac{|\varphi^{(\alpha)}(x)|}{h^{|\alpha|}M_{|\alpha|}} < \infty. 
$$
We define
$$
\mathcal{B}^{(M)}(\Omega) := \varprojlim_{h \rightarrow 0^+}\mathcal{B}^{M,h}(\Omega), \qquad  \mathcal{B}^{\{M\}}(\Omega) :=  \varinjlim_{h \rightarrow \infty}\mathcal{B}^{M,h}(\Omega),
$$
and
$$
\mathcal{E}^{(M)}(\Omega) := \varprojlim_{\Theta \Subset \Omega} \mathcal{B}^{(M)}(\Theta), \qquad \mathcal{E}^{\{M\}}(\Omega) := \varprojlim_{\Theta \Subset \Omega} \mathcal{B}^{\{M\}}(\Theta).
$$
The space $\mathcal{E}^{\{p!\}}(\Omega)$ coincides with the space $\mathcal{A}(\Omega)$ of real analytic functions in $\Omega$.

From now on we shall write $[M]$ instead of $(M)$ or $\{M\}$ if we want to treat both cases simultaneously. In addition, we shall often first state assertions for the Beurling case (= $(M)$-case) followed in parentheses by the corresponding ones for the Roumieu case (= $\{M\}$-case).

Given two weight sequences $M$ and $N$, we have that $\mathcal{E}^{[N]}(\Omega) \subseteq \mathcal{E}^{[M]}(\Omega)$ continuously if $N \subset M$ and $\mathcal{E}^{\{N\}}(\Omega) \subset \mathcal{E}^{(M)}(\Omega)$ continuously if $N \prec M$. Hence, $\mathcal{E}^{[N]}(\Omega) = \mathcal{E}^{[M]}(\Omega)$ as locally convex spaces if $N \approx M$ and  $\mathcal{A}(\Omega) \subset \mathcal{E}^{[M]}(\Omega)$ continuously if $M$ satisfies $(NA)$.

Let $M$ be a weight sequence. For $K \subsetc \R^d$ and $h >0$ we write $\mathcal{D}^{M,h}_K$ for the Banach space consisting of all $\varphi \in C^\infty(\R^d)$ with $\operatorname{supp} \varphi \subseteq K$ such that $\| \varphi \|_{\mathcal{B}^{M,h}(\R^d)} < \infty$. We set
$$
\mathcal{D}^{(M)}_K := \varprojlim_{h \rightarrow 0^+}\mathcal{D}^{M,h}_K, \qquad  \mathcal{D}^{\{M\}}_K :=  \varinjlim_{h \rightarrow \infty}\mathcal{D}^{M,h}_K.
$$
For $\Omega \subseteq \R^d$ open we define 
$$
\mathcal{D}^{[M]}(\Omega) := \varinjlim_{K \subsetc \Omega} \mathcal{D}^{[M]}_K.
$$
The space $\mathcal{D}^{[M]}(\Omega)$ is non-trivial if and only if $M$ is non-quasianalytic. 

\subsection{Classes defined by weight matrices} \label{wm-sect}
Following \cite{R-S19} (see also \cite{R-S15}), we define a \emph{weight matrix} as a non-empty family $\mathfrak{M}$ of weight sequences that is totally ordered with respect to the pointwise order relation $\leq$ on sequences.  We will make use of the following conditions on a weight matrix $\mathfrak{M}$:
\begin{itemize}

\item [$(\mathfrak{M}.1)^{*}_{\mathrm{w}}$] $\forall M \in \mathfrak{M} \, \exists N \in \mathfrak{M} \, \exists C > 0 \, \forall p \in \Z_+ \, \forall 1 \leq q \leq p \, : \, n^*_q \leq C m^*_p$;
\item [$\{\mathfrak{M}.1\}^{*}_{\mathrm{w}}$] $\forall M \in \mathfrak{M} \, \exists N \in \mathfrak{M} \, \exists C > 0 \, \forall p \in \Z_+ \, \forall 1 \leq q \leq p \, : \, m^*_q \leq C n^*_p$;
\item [$(\mathfrak{M}.2)' \ $] $\forall M \in \mathfrak{M} \, \exists N \in \mathfrak{M} \, \exists C,H > 0 \, \forall p \in \N \, : \, N_{p+1} \leq CH^pM_p$; 
\item [$\{\mathfrak{M}.2\}' \ $] $\forall M \in \mathfrak{M} \, \exists N \in \mathfrak{M} \, \exists C,H > 0 \, \forall p \in \N \, : \, M_{p+1} \leq CH^pN_p$; 
\item [$(NA)\ \ $]  Each $M \in \mathfrak{M}$ satisfies $(NA)$.
\end{itemize}
The conditions $(\mathfrak{M}.2)'$ and $\{\mathfrak{M}.2\}'$  are denoted by $(\mathfrak{M}_{(\operatorname{dc})})$ and $(\mathfrak{M}_{\{\operatorname{dc}\}})$ in \cite{R-S15}, respectively. The conditions $(\mathfrak{M}.1)^{*}_{\mathrm{w}}$ and $\{\mathfrak{M}.1\}^{*}_{\mathrm{w}}$ were introduced in \cite{R-S18} but no name was given to them there. A weight matrix $\mathfrak{M}$ is said to be non-quasianalytic if each $M \in \mathfrak{M}$ is non-quasianalytic. 
 
The relation $\mathfrak{N} (\subset) \mathfrak{M}$ ($\mathfrak{N} \{\subset\}\mathfrak{M}$) between two weight matrices $\mathfrak{M}$ and $\mathfrak{N}$ means that
$$
\forall M \in \mathfrak{M} \, \exists N \in \mathfrak{N} \, : \, N \subset M  \, (\forall M \in \mathfrak{N} \, \exists N \in \mathfrak{M} \, : \, M \subset N).
$$
 We write $\mathfrak{N} [\approx] \mathfrak{M}$ if both $\mathfrak{N} [\subset] \mathfrak{M}$ and $\mathfrak{M} [\subset] \mathfrak{N}$ hold. Furthermore, we define the relation $\mathfrak{N} \prec \mathfrak{M}$ as 
 $$
\forall M \in \mathfrak{M} \, \forall N \in \mathfrak{N} \, : \, N \prec M.  
$$

\begin{lemma} \label{equivalent} Let  $\mathfrak{M}$ be a weight matrix satisfying $[\mathfrak{M}.1]^{*}_{\mathrm{w}}$ and $[\mathfrak{M}.2]'$. Then, there is a weight matrix $\mathfrak{N}$ with $\mathfrak{M} [\approx] \mathfrak{N}$ such that $\mathfrak{N}$ satisfies $[\mathfrak{M}.2]'$ and each  $N \in \mathfrak{N}$ satisfies $(M.1)^{*}$. If  $\mathfrak{M}$ satisfies $(NA)$ (is non-quasianalytic, respectively), then $\mathfrak{N}$ can be chosen in such a way that $\mathfrak{N}$ satisfies $(NA)$ (is non-quasianalytic, respectively) as well. 
\end{lemma}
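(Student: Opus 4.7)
The plan is to define, for each $M \in \mathfrak{M}$, a ``smoothed'' sequence $\widetilde{M}$ by
\[
\widetilde{M}_p := p! \cdot (M^*)^{\mathrm{lc}}_p,
\]
where $(M^*)^{\mathrm{lc}}$ denotes the log-convex regularization of $M^* = (M_p/p!)_p$ (the largest log-convex sequence $\leq M^*$ with first term $1$; equivalently, the logarithm of $(M^*)^{\mathrm{lc}}$ is the lower convex envelope of $(\log M^*_p)_p$). By construction $\widetilde{M}^* = (M^*)^{\mathrm{lc}}$ has non-decreasing ratios, so $\widetilde{M}$ satisfies $(M.1)^*$; moreover $\widetilde{M} \leq M$ pointwise, and the monotonicity of the lower convex envelope under pointwise order ensures that the family $\mathfrak{N} := \{\widetilde{M} : M \in \mathfrak{M}\}$ inherits the total ordering from $\mathfrak{M}$.

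The main step is to prove $\mathfrak{M}\,[\approx]\,\mathfrak{N}$. One half comes for free from $\widetilde{M} \leq M$. For the other half in the Beurling case, pick $M \in \mathfrak{M}$ and use $(\mathfrak{M}.1)^*_{\mathrm{w}}$ to obtain $M' \in \mathfrak{M}$ with $(m')^*_q \leq C m^*_p$ for all $1 \leq q \leq p$. Setting $\sigma_p := \sup_{q \leq p} (m')^*_q$, one gets a non-decreasing sequence with $\sigma_p \leq C m^*_p$, so $S_p := \prod_{j=1}^p \sigma_j$ is log-convex with $S_0 = 1$ and $S_p \leq C^p M^*_p$. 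The universal property of $(M^*)^{\mathrm{lc}}$ applied to the log-convex sequence $S_p/C^p$ then forces $S_p \leq C^p (M^*)^{\mathrm{lc}}_p$; combined with $(M')^*_p \leq S_p$ this yields $M'_p \leq C^p \widetilde{M}_p$, i.e., $M' \subset \widetilde{M}$, as required. The Roumieu case is entirely dual.

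With $\mathfrak{M}\,[\approx]\,\mathfrak{N}$ in hand, the condition $[\mathfrak{M}.2]'$ for $\mathfrak{N}$ follows by a three-step chain: given $N \in \mathfrak{N}$, transfer to $\mathfrak{M}$ via the equivalence, apply $[\mathfrak{M}.2]'$ inside $\mathfrak{M}$, and transfer back using $\widetilde{M} \leq M$ in the appropriate direction. Preservation of $(NA)$ is automatic from the universal property: a bound $M^*_p \geq 1/(C_H H^p)$ is itself a log-affine, hence log-convex, sequence below the data, so the same bound is inherited by $(M^*)^{\mathrm{lc}}$, giving $p! \prec \widetilde{M}$.

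The delicate item is preservation of non-quasianalyticity. I would identify the ratios of $(M^*)^{\mathrm{lc}}$ as geometric means of consecutive ratios of $M^*$ on the linear pieces of the log-convex envelope, and then apply AM-GM to estimate $\sum 1/\widetilde{m}_p$ in terms of $\sum 1/m_p$ (alternatively one may work with the associated function $\omega_{M^*}$, which is invariant under log-convex regularization). This is essentially the classical fact that the Denjoy-Carleman convergence condition survives log-convex regularization, and it is the main obstacle to a fully routine verification; everything else reduces to chasing the universal property of the lower convex envelope.
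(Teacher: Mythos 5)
Most of your argument is sound and is essentially an unpacking of the result the paper simply cites for this step, namely \cite[Lemma 8]{R-S18}: the construction $\widetilde{M}_p=p!\,(M^*)^{\mathrm{lc}}_p$, the verification of $\mathfrak{M}\,[\approx]\,\mathfrak{N}$ via the sequence $S_p=\prod_{j\le p}\sigma_j$ and the universal property of the log-convex minorant, the transfer of $[\mathfrak{M}.2]'$ along $[\approx]$, and the preservation of $(NA)$ (your universal-property argument for $(NA)$ is correct and even cleaner than a transfer argument). Two caveats, one minor and one fatal. The minor one: in the Roumieu case $\{\mathfrak{M}.1\}^*_{\mathrm{w}}$ does not prevent an individual $M\in\mathfrak{M}$ from having $M^*_p$ decaying superexponentially (e.g.\ $M_p=\sqrt{p!}$ paired with $N_p=(p!)^2$), in which case $(M^*)^{\mathrm{lc}}$ is identically zero and $\widetilde{M}$ is not a weight sequence; you must discard such members, and your cofinality argument shows this is harmless. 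The fatal one is the last step: non-quasianalyticity is \emph{not} preserved member-by-member under $M\mapsto p!\,(M^*)^{\mathrm{lc}}$, and neither AM--GM nor the ``classical fact'' rescues it. The classical invariance concerns the regularization of $M$ itself, where the Denjoy--Carleman condition can be rewritten as $\sum_p M_p^{-1/p}<\infty$ and hence manifestly depends only on the log-convex minorant of $M$; here you regularize $M^*$ while the condition $\sum_p 1/m_p=\sum_p 1/(p\,m^*_p)<\infty$ carries the extra weight $1/p$, and AM--GM only controls the unweighted sum $\sum_p 1/\widetilde{m}^{\,*}_p$ by $\sum_p 1/m^*_p$. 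On a linear piece of the envelope stretching from $a$ to $b$ this loses a factor of order $\log(b/a)$, which is unbounded.

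Concretely, let $a_1=2$, $b_k=\lceil e^k a_k\rceil$, $a_{k+1}=b_k$, and set $m_p=k^2(b_k-a_k)$ for $a_k<p\le b_k$. Then $m_p$ is nondecreasing and $\sum_p 1/m_p=\sum_k k^{-2}<\infty$, so $M$ is a non-quasianalytic weight sequence; but $m^*_p=m_p/p$ is decreasing on each block, so $\log M^*$ is concave there and $(M^*)^{\mathrm{lc}}$ is the chordal interpolation through the contact points $a_k$. Hence $\widetilde{m}^{\,*}_p=\bigl(M^*_{b_k}/M^*_{a_k}\bigr)^{1/(b_k-a_k)}\le C k^2$ on the $k$-th block, and
\[
\sum_{a_k<p\le b_k}\frac{1}{\widetilde{m}_p}=\sum_{a_k<p\le b_k}\frac{1}{p\,\widetilde{m}^{\,*}_p}\ \ge\ \frac{1}{Ck^2}\,\log\frac{b_k}{a_k}\ \gtrsim\ \frac{1}{Ck},
\]
so $\widetilde{M}$ is quasianalytic. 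This $M$ sits inside a non-quasianalytic matrix satisfying $\{\mathfrak{M}.1\}^*_{\mathrm{w}}$ and $\{\mathfrak{M}.2\}'$ (pair it with $N_p=(p!)^2$), so the hypotheses of the lemma do not exclude it. The correct repair is the one the paper uses: among log-convex sequences with first term $1$, non-quasianalyticity propagates \emph{upward} along $\subset$ (via $\sum_p M_p^{-1/p}$). In the Beurling case every $\widetilde{M}$ has some non-quasianalytic $M'\in\mathfrak{M}$ with $M'\subset\widetilde{M}$ (your own $[\approx]$ argument supplies it), so all of $\mathfrak{N}$ is non-quasianalytic; in the Roumieu case the members of $\mathfrak{M}$ sit \emph{above} the $\widetilde{M}$'s, so one must pick a single non-quasianalytic $N_0\in\mathfrak{N}$ and replace $\mathfrak{N}$ by the cofinal subfamily $\{N\in\mathfrak{N}\mid N_0\le N\}$, which still satisfies $\{\approx\}$ and $\{\mathfrak{M}.2\}'$. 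Your proposal is missing this restriction, and without it the Roumieu statement is false for your $\mathfrak{N}$.
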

\begin{proof}
By \cite[Lemma 8]{R-S18}, there is a weight matrix $\widetilde{\mathfrak{N}}$  with $\mathfrak{M} [\approx] \widetilde{\mathfrak{N}}$ such that each  $N \in \widetilde{\mathfrak{N}}$ satisfies $(M.1)^{*}$. Since the condition $[\mathfrak{M}.2]'$ is stable under the relation $[\approx]$, $\widetilde{\mathfrak{N}}$ also satisfies $[\mathfrak{M}.2]'$. Suppose that $\mathfrak{M}$ satisfies $(NA)$ (is non-quasianalytic, respectively). In the Beurling case, it is clear that $\mathfrak{N} = \widetilde{\mathfrak{N}}$ also satisfies $(NA)$ (is non-quasianalytic, respectively). In the Roumieu case, there exists $N_0 \in \widetilde{\mathfrak{N}}$ that satisfies $(NA)$ (is non-quasianalytic, respectively). The result then holds for $\mathfrak{N} = \{ N \in \widetilde{\mathfrak{N}} \, | \, N_0 \leq N \}$.
\end{proof}
Let $\mathfrak{M}$ be a weight matrix and let $\Omega \subseteq \R^d$ be open. We define
$$
\mathcal{B}^{(\mathfrak{M})}(\Omega) := \varprojlim_{M \in \mathfrak{M}}\mathcal{B}^{(M)}(\Omega), \qquad  \mathcal{B}^{\{\mathfrak{M}\}}(\Omega) :=  \varinjlim_{M \in \mathfrak{M}}\mathcal{B}^{\{M\}}(\Omega),
$$
 and
$$
\mathcal{E}^{[\mathfrak{M}]}(\Omega) := \varprojlim_{\Theta \Subset \Omega}\mathcal{B}^{[\mathfrak{M}]}(\Theta).
$$

Given two weight matrices $\mathfrak{M}$ and $\mathfrak{N}$, we have that $\mathcal{E}^{[\mathfrak{N}]}(\Omega) \subseteq \mathcal{E}^{[\mathfrak{M}]}(\Omega)$ continuously if $\mathfrak{N} [\subset] \mathfrak{M}$ and $\mathcal{E}^{\{\mathfrak{N}\}}(\Omega) \subset \mathcal{E}^{(\mathfrak{M})}(\Omega)$ continuously if $\mathfrak{N} \prec \mathfrak{M}$. Hence, $\mathcal{E}^{[\mathfrak{N}]}(\Omega) = \mathcal{E}^{[\mathfrak{M}]}(\Omega)$ as locally convex spaces if $N [\approx] M$ and $\mathcal{A}(\Omega) \subset \mathcal{E}^{[\mathfrak{M}]}(\Omega)$ continuously if $\mathfrak{M}$ satisfies $(NA)$. 

Let $\mathfrak{M}$ be a weight matrix and let $K \subsetc \R^d$.  We set
$$
\mathcal{D}^{(\mathfrak{M})}_K := \varprojlim_{M \in \mathfrak{M}}\mathcal{D}^{(M)}_K, \qquad  \mathcal{D}^{\{\mathfrak{M}\}}_K :=  \varinjlim_{M \in \mathfrak{M}}\mathcal{D}^{\{M\}}_K.
$$
For $\Omega \subseteq \R^d$ open we define 
$$
\mathcal{D}^{[\mathfrak{M}]}(\Omega) := \varinjlim_{K \subsetc \Omega} \mathcal{D}^{[\mathfrak{M}]}_K.
$$
The space $\mathcal{D}^{(\mathfrak{M})}(\Omega)$ is non-trivial if and only if  $\mathfrak{M}$ is non-quasianalytic \cite[Theorem 4.1]{Schindl}. In fact, if $\mathfrak{M}$ is non-quasianalytic, there is a non-quasianalytic weight sequence $N$ such that $N \prec M$ for all $M \in \mathfrak{M}$ and, thus, $\{0 \} \subsetneq \mathcal{D}^{\{N\}}(\Omega) \subset \mathcal{D}^{(\mathfrak{M})}(\Omega)$ \cite[Proposition 4.7]{Schindl}. It is clear that $\mathcal{D}^{\{\mathfrak{M}\}}(\Omega)$ is non-trivial if and only if  there exists a non-quasianalytic $M \in \mathfrak{M}$. In such a case, we can find a non-quasianalytic weight matrix $\mathfrak{N} \subseteq \mathfrak{M}$ such that $\mathfrak{M} \{\approx\} \mathfrak{N}$ and, thus, $\mathcal{D}^{\{\mathfrak{M}\}}(\Omega) = \mathcal{D}^{\{\mathfrak{N}\}}(\Omega)$.

\begin{remark}\label{countable}
The name weight matrix is justified by the fact that for every weight matrix $\mathfrak{M}$ there is a countable weight matrix $\mathfrak{N} \subseteq \mathfrak{M}$ such that $\mathfrak{M} [\approx] \mathfrak{N}$ (cf.\ the proof of \cite[Lemma 2.5]{R-S19}).
\end{remark}

\subsection{Braun-Meise-Taylor classes}   By a \emph{weight function}  we mean a continuous increasing function $\omega: [0,\infty) \rightarrow [0,\infty)$ with $\omega_{|[0,1]} \equiv 0$ satisfying the following properties:	\begin{itemize}
		\item[$(\alpha)$] $\omega(2t) = O(\omega(t))$ as $t \rightarrow \infty$;
		\item[$(\beta)$] $\omega(t) = O(t)$ as $t \rightarrow \infty$;
		\item[$(\gamma)$] $\log t = o(\omega(t))$ as $t \rightarrow \infty$;
		\item[$(\delta) $] $\phi = \phi_\omega: [0, \infty) \rightarrow [0, \infty)$, $\phi(t) = \omega(e^{t})$, is convex.
	\end{itemize}
We refer to \cite{B-M-T} for the meaning of these conditions. A weight function $\omega$ is called non-quasianalytic if
$$\int_{0}^{\infty}\frac{\omega(t)}{1+t^2} \dt < \infty$$
and quasianalytic otherwise.  Each non-quasianalytic weight function $\omega$ satisfies $\omega(t) = o(t)$.
We also consider the following condition on a weight function $\omega$ :
\begin{itemize}
\item[$(\alpha_0)$] $\exists C > 0 \, \exists t_0 > 0 \, \forall \lambda \geq 1 \, \forall t \geq t_0 \, : \, \omega(\lambda t) \leq C\lambda \omega(t).$
\end{itemize}
By \cite[Theorem 6.3]{R-S15} (see also the proof of \cite[Proposition 1.1]{P-V}), a weight function $\omega$ satisfies $(\alpha_0)$ if and only if there is a concave weight function $\sigma$ such that $\omega \asymp \sigma$ (meaning that $\omega(t) = O(\sigma(t))$ and $\sigma(t) = O(\omega(t))$).
 
Let $\omega$ be a weight function. We define 
$$
\phi^{*} : [0, \infty) \rightarrow [0, \infty), \, \phi^{*}(t) = \sup_{r \geq 0} \{tr- \phi(r)\}.
$$
The function $\phi^{*}$ is increasing and convex, $\phi^*(0) = 0$, $(\phi^*)^* = \phi$, and $\phi^*(t)/t \nearrow \infty$ on  $[0,\infty)$. 

Let $\omega$ be a weight function and let $\Omega \subseteq \R^d$ be open. For $h > 0$ we write $\mathcal{B}^{\omega,h}(\Omega)$ for the Banach space consisting of all $\varphi \in C^\infty(\Omega)$ such that
$$
\| \varphi \|_{\mathcal{B}^{\omega,h}(\Omega)} := \sup_{\alpha \in \N^d}\sup_{x \in \Omega} |\varphi^{(\alpha)}(x)| \exp\left(-\frac{1}{h}\phi^*(h|\alpha|) \right)<\infty. 
$$
We define
$$
\mathcal{B}^{(\omega)}(\Omega) := \varprojlim_{h \rightarrow 0^+}\mathcal{B}^{\omega,h}(\Omega), \qquad  \mathcal{B}^{\{\omega\}}(\Omega) :=  \varinjlim_{h \rightarrow \infty}\mathcal{B}^{\omega,h}(\Omega),
$$
and 
$$
\mathcal{E}^{[\omega]}(\Omega) := \varprojlim_{\Theta \Subset \Omega} \mathcal{B}^{[\omega]}(\Theta).
$$
For $\omega(t) = \max \{ t-1, 0\}$ the space $\mathcal{E}^{\{\omega\}}(\Omega)$ coincides with $\mathcal{A}(\Omega)$.

Given two weight functions $\omega$ and $\sigma$, we have that $\mathcal{E}^{[\sigma]}(\Omega) \subseteq \mathcal{E}^{[\omega]}(\Omega)$ continuously if $\omega(t) = O(\sigma(t))$ and $\mathcal{E}^{\{\sigma\}}(\Omega) \subset \mathcal{E}^{(\omega)}(\Omega)$ continuously if $\omega(t) = o(\sigma(t))$. Hence, $\mathcal{E}^{[\omega]}(\Omega) = \mathcal{E}^{[\sigma]}(\Omega)$ as locally convex spaces if $\omega \asymp \sigma$ and $\mathcal{A}(\Omega) \subset \mathcal{E}^{[\omega]}(\Omega)$ continuously if $\omega(t) = o(t)$.

Let $\omega$ be a weight function. For $K \subsetc \R^d$ and $h >0$ we write $\mathcal{D}^{\omega,h}_K$ for the Banach space consisting of all $\varphi \in C^\infty(\R^d)$ with $\operatorname{supp} \varphi \subseteq K$ such that $\| \varphi \|_{\mathcal{B}^{\omega,h}(\R^d)} < \infty$. We set
$$
\mathcal{D}^{(\omega)}_K := \varprojlim_{h \rightarrow 0^+}\mathcal{D}^{\omega,h}_K, \qquad  \mathcal{D}^{\{\omega\}}_K :=  \varinjlim_{h \rightarrow \infty}\mathcal{D}^{\omega,h}_K.
$$
For $\Omega \subseteq \R^d$ open we define 
$$
\mathcal{D}^{[\omega]}(\Omega) := \varinjlim_{K \subsetc \Omega} \mathcal{D}^{[\omega]}_K.
$$
The space $\mathcal{D}^{[\omega]}(\Omega)$ is non-trivial if and only if $\omega$ is non-quasianalytic.

Given a weight function $\omega$, we associate to it the weight matrix $\mathfrak{M}_{\omega} = (M_{\omega}^{h})_{h > 0}$, where the weight sequence $M_\omega^{h}= (M^h_{\omega,p})_{p \in \N}$ is defined by
$$
M^h_{\omega,p} := \exp\left(\frac{1}{h}\phi^*(hp) \right), \qquad p \in \N.
$$
We have that $ \omega \asymp \omega_M$ for each $M \in \mathfrak{M}_\omega$ \cite[Lemma 5.7]{R-S15}. Hence, by \cite[Lemma 3.10]{Komatsu}, $\omega(t) = o(t)$ if and only if  $\mathfrak{M}_{\omega}$ satisfies $(NA)$. Similarly, \cite[Lemma 4.1]{Komatsu} yields that $\omega$ is non-quasianalytic if and only if $\mathfrak{M}_{\omega}$ is non-quasianalytic. 
The next two lemmas will enable us to infer our results for the weight function case from those  for the weight matrix case.

\begin{lemma} \label{reduction} Let $\omega$ be a weight function.
\begin{itemize}
\item[$(i)$] \cite[Corollary 5.15]{R-S15} $\mathcal{E}^{[\omega]}(\Omega) = \mathcal{E}^{[\mathfrak{M}_\omega]}(\Omega)$ as locally convex spaces for all $\Omega \subseteq \R^d$ open.
\item[$(ii)$] \cite[Proposition 4.5]{R-S19} If $\omega$ satisfies $(\alpha_0)$ and $\omega(t) = o(t)$, there is a weight matrix $\mathfrak{M}$ with $\mathfrak{M}_\omega [\approx] \mathfrak{M}$ such that  $\mathfrak{M}$ satisfies $[\mathfrak{M}.1]^{*}_{\mathrm{w}}$, $[\mathfrak{M}.2]'$, and $(NA)$.
\end{itemize}
\end{lemma}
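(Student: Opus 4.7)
The plan is to reduce both parts to manipulations of the Young conjugate $\phi^*$ via the identity $M^h_{\omega,p} = \exp(\phi^*(hp)/h)$, under which the Braun--Meise--Taylor weighted seminorm $\|\cdot\|_{\mathcal{B}^{\omega,h}(\Omega)}$ coincides, term by term, with the Denjoy--Carleman seminorm $\|\cdot\|_{\mathcal{B}^{M^h_\omega, 1}(\Omega)}$ associated to the sequence $M^h_\omega \in \mathfrak{M}_\omega$.

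For $(i)$, I would fix $\Theta \Subset \Omega$ and show that the two systems of seminorms, $\{\|\cdot\|_{\mathcal{B}^{\omega, h}(\Theta)}\}_h$ and $\{\|\cdot\|_{\mathcal{B}^{M^h_\omega,k}(\Theta)}\}_{h,k}$, are equivalent, so that both topologies on $\mathcal{E}^{[\omega]}(\Omega)$ agree. One inclusion is immediate by taking $k=1$ in the matrix seminorm. For the other, one must absorb the factor $k^{|\alpha|}$ into the exponential by choosing an appropriate parameter $h'$, i.e.\ establish an inequality of the form $\phi^*(h'p)/h' \leq \phi^*(hp)/h + p\log k + C$ with $h'$ smaller than $h$ in the Beurling case and larger than $h$ in the Roumieu case. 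This is a routine consequence of the convexity of $\phi^*$ together with $\phi^*(t)/t \nearrow \infty$.

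For $(ii)$, the hypothesis $(\alpha_0)$ supplies a concave weight function $\sigma$ with $\omega \asymp \sigma$. The identifications $\mathcal{E}^{[\omega]}(\Omega) = \mathcal{E}^{[\sigma]}(\Omega)$ (from $\omega \asymp \sigma$) and $\mathcal{E}^{[\sigma]}(\Omega) = \mathcal{E}^{[\mathfrak{M}_\sigma]}(\Omega)$ (from part $(i)$), together with the analogous statement for $\omega$ itself, show that $\mathfrak{M}_\omega [\approx] \mathfrak{M}_\sigma$, so I would take $\mathfrak{M} := \mathfrak{M}_\sigma$ and verify the three conditions there. Concavity of $\sigma$ produces a quantitative subadditivity of $\phi_\sigma^*$; translating this back through $M^h_{\sigma,p} = \exp(\phi_\sigma^*(hp)/h)$, one obtains that the sequences $M^h_\sigma/p!$ have ratios that are almost increasing across different members of $\mathfrak{M}_\sigma$, which is precisely $[\mathfrak{M}.1]^{*}_{\mathrm{w}}$, while the moderate-growth behaviour of $\phi_\sigma^*$ yields $[\mathfrak{M}.2]'$. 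Finally $(NA)$, i.e.\ $p! \prec M^h_\sigma$ for each $h$, is equivalent to $\omega_{M^h_\sigma}(t) = o(t)$, which follows from $\omega_{M^h_\sigma} \asymp \sigma \asymp \omega$ together with $\omega(t) = o(t)$.

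The main technical obstacle is the concave-regularization step for $(ii)$ and the clean translation of the regularity of $\phi_\sigma^*$ into matrix conditions on $\mathfrak{M}_\sigma$; a self-contained proof would essentially reproduce the arguments of \cite[Corollary 5.15]{R-S15} and \cite[Proposition 4.5]{R-S19} that are cited in the statement. In the exposition I would therefore simply invoke these references rather than redo the bookkeeping on $\phi^*$.
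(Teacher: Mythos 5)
Your plan coincides with the paper's treatment: Lemma \ref{reduction} is stated purely as a citation of \cite[Corollary 5.15]{R-S15} and \cite[Proposition 4.5]{R-S19}, with no proof given, and your sketch is a faithful account of what those references do (identifying $\|\cdot\|_{\mathcal{B}^{\omega,h}}$ with $\|\cdot\|_{\mathcal{B}^{M^h_\omega,1}}$ for $(i)$, and passing to the concave regularization $\sigma$ with $\mathfrak{M}:=\mathfrak{M}_\sigma$ for $(ii)$). One small caveat: the absorption of the factor $k^{|\alpha|}$ in $(i)$ is driven by condition $(\alpha)$ of the weight function (via the resulting growth property of $\phi^*$), not by convexity of $\phi^*$ and $\phi^*(t)/t\nearrow\infty$ alone, and in $(ii)$ the relation $\mathfrak{M}_\omega[\approx]\mathfrak{M}_\sigma$ should be derived directly from $\omega\asymp\sigma$ rather than inferred from the equality of the function spaces.
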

For  a weight function $\omega$ satisfying $\omega(t) = o(t)$ we define 
$$
\omega^{\star} : (0, \infty) \rightarrow [0, \infty), \,\omega^{\star}(s) = \sup_{t \geq 0} \{\omega(t) - ts\}.
$$
The function $\omega^{\star}$ is decreasing and convex. Given $h > 0$, we set $\mbox{}_h\omega(t) = h\omega(t)$ and $\omega_h(t) = \omega(ht)$. Then,
\begin{equation}
(\mbox{}_h\omega)^\star(s) = h\omega^\star\left ( \frac{s}{h}\right), \qquad (\omega_h)^\star(s) = \omega^\star\left ( \frac{s}{h}\right), \qquad s > 0.
\label{star-transf}
\end{equation}
For a weight sequence $M$ satisfying $(NA)$ it holds that  \cite[Proof of Lemma 5.6]{P-V} (cf. \cite[Lemma 3.10]{R-S16})
\begin{equation}
\omega^\star_M(s) \leq \omega_{M^*}  \left( \frac{1}{s}\right) \leq \omega^\star_M  \left( \frac{s}{e}\right), \qquad s > 0.
\label{star-ws}
\end{equation}
\begin{lemma} \label{reduction-1} Let $\omega$ be a weight function satisfying $\omega(t) = o(t)$. 
\begin{itemize}
\item[$(i)$] For all $M \in \mathfrak{M}_\omega$ and $h > 0$ there are $C, k > 0$ such that
\begin{equation}
\omega_{M^*} \left(\frac{1}{hs}\right) \leq \frac{1}{k} \omega^\star(ks) + \log C, \qquad s > 0.
\label{red-1}
\end{equation}
\item[$(ii)$] For all $k >0$ there are  $M \in \mathfrak{M}_\omega$ and $C,h > 0$   such that \eqref{red-1} holds.
\item[$(iii)$] For all $M \in \mathfrak{M}_\omega$ and $h > 0$ there are $C, k > 0$ such that
\begin{equation}
\frac{1}{k} \omega^\star(ks)  \leq \omega_{M^*} \left(\frac{1}{hs}\right) + \log C, \qquad s > 0.
\label{red-2}
\end{equation}
\item[$(iv)$] For all $k >0$ there are  $M \in \mathfrak{M}_\omega$ and $C,h > 0$  such that \eqref{red-2} holds.
\end{itemize}
\end{lemma}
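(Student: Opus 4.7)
The plan is to reduce each of (i)--(iv) to a comparison between $\omega_M^\star$ and $\omega^\star$ via the sandwich $\omega_M^\star(r) \leq \omega_{M^*}(1/r) \leq \omega_M^\star(r/e)$ provided by \eqref{star-ws}. The core step is a quantitative version of the equivalence $\omega_M \asymp \omega$ for $M \in \mathfrak{M}_\omega$ (\cite[Lemma 5.7]{R-S15}): for $M = M^{h_0}_\omega$ I will establish
$$
h_0\, \omega_M(t) \leq \omega(t) \leq 2h_0\, \omega_M(t) + D_{h_0}, \qquad t \geq 0,
$$
for some constant $D_{h_0} > 0$ depending only on $h_0$ and $\omega$.

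The lower bound is immediate from Young duality: setting $u = \log t$ and using $\phi^{**} = \phi$,
$$
h_0\, \omega_M(e^u) = \sup_{s \in h_0 \N}\{su - \phi^*(s)\} \leq \sup_{s \geq 0}\{su - \phi^*(s)\} = \phi(u) = \omega(e^u).
$$
For the upper bound, given $u \geq 0$ pick $s^*$ attaining the continuous supremum and set $s_0 := h_0\lfloor s^*/h_0 \rfloor \in h_0\N$. From $u \in \partial\phi^*(s^*)$ and convexity of $\phi^*$ one gets $\phi^*(s^*) - \phi^*(s_0) \leq u(s^* - s_0) \leq h_0 u$, whence
$$
h_0\, \omega_M(e^u) \geq s_0 u - \phi^*(s_0) \geq \phi(u) - h_0 u,
$$
that is, $\omega(t) \leq h_0\, \omega_M(t) + h_0 \log_+ t$. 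Property $(\gamma)$ yields $\log_+ t \leq \epsilon\, \omega(t) + C_\epsilon$ for every $\epsilon > 0$; choosing $\epsilon = 1/(2h_0)$ absorbs the logarithm and produces the claimed $D_{h_0}$.

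Taking Young conjugates of the sandwich and combining with \eqref{star-ws}, I obtain
$$
\frac{1}{2h_0}\omega^\star(2h_0 r) - \frac{D_{h_0}}{2h_0} \leq \omega_{M^*}\!\left(\frac{1}{r}\right) \leq \frac{1}{h_0}\omega^\star\!\left(\frac{h_0 r}{e}\right), \qquad r > 0.
$$
Setting $r = hs$, each part now follows by elementary algebra using that $\omega^\star$ is decreasing. For (i), the cases $h \leq e$ and $h > e$ give $k = h_0 h/e$ and $k = h_0$ respectively, both with $C = 1$. For (iii), the cases $h \geq 1$ and $h < 1$ give $k = 2h_0 h$ and $k = 2h_0$ respectively, both with $C = \exp(D_{h_0}/(2h_0))$. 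For (ii), given $k > 0$, I take $h = 1$ and $M = M^{ek}_\omega$; for (iv), $h = 1$ and $M = M^{k/2}_\omega$.

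I expect the main obstacle to be the upper inequality in the sandwich: naive discretization of the continuous sup only produces $\omega(t) \leq h_0\, \omega_M(t) + h_0 \log t$, with an unbounded error term, and converting this to a bounded constant $D_{h_0}$ requires invoking $\log t = o(\omega(t))$ with the specific quantitative choice $\epsilon = 1/(2h_0)$, which is precisely why the factor $2$ in front of $\omega_M$ and the $h_0$-dependence of $D_{h_0}$ appear. Once the sandwich is in place, the rest is Young--Fenchel arithmetic combined with the monotonicity of $\omega^\star$.
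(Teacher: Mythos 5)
Your argument is correct, but it follows a genuinely different route from the paper's. The paper first uses \eqref{star-transf} and \eqref{star-ws} to reduce \eqref{red-1} and \eqref{red-2} to inequalities between $\omega_M$ and $\omega$ on the direct (non-conjugate) side, namely $\omega_M(et/h)\leq\tfrac1k\omega(t)+\log C$ and $\tfrac1k\omega(t)\leq\omega_M(t/h)+\log C$; it then disposes of $(i)$ and $(iii)$ softly by citing $\omega\asymp\omega_M$ together with condition $(\alpha)$, proves $(ii)$ from the elementary bound $\omega_{M^k_\omega}\leq\tfrac1k\omega$ (the discrete supremum is dominated by the continuous one, which is also the easy half of your sandwich), and proves $(iv)$ via the submultiplicativity $M^{h}_{\omega,p+q}\leq M^{2h}_{\omega,p}M^{2h}_{\omega,q}$ and an iteration of the inequality $2\omega_{M^{2h}_\omega}\leq\omega_{M^h_\omega}$ from \cite[Lemma 3.12]{R-S16}. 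You instead prove a single quantitative two-sided estimate $h_0\,\omega_{M^{h_0}_\omega}\leq\omega\leq 2h_0\,\omega_{M^{h_0}_\omega}+D_{h_0}$ by discretizing the Young--Fenchel supremum (losing only $h_0\log t$, which $(\gamma)$ absorbs) and then pass to conjugates; all four parts follow uniformly from this sandwich, the monotonicity of $\omega^\star$, and \eqref{star-ws}. What your approach buys is self-containedness and explicit constants: the upper half of your sandwich is in effect a quantitative re-proof of \cite[Lemma 5.7]{R-S15}, and your treatment of $(iv)$ avoids the submultiplicativity/iteration argument and the appeal to \cite[Lemma 3.12]{R-S16} entirely; you also never need condition $(\alpha)$, since the dilation by $h$ is handled on the conjugate side by monotonicity of $\omega^\star$. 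One cosmetic remark: the subgradient step in your discretization is unnecessary, since $s_0\geq s^*-h_0$ together with the monotonicity of $\phi^*$ already gives $s_0u-\phi^*(s_0)\geq\phi(u)-h_0u$; and the case $t<1$ (where the maximizer may sit at $s^*=0$) is trivial because $\omega\equiv0$ there.
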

\begin{proof}
By \eqref{star-transf} and \eqref{star-ws}, \eqref{red-1} holds if 
\begin{equation}
\omega_M \left( \frac{et}{h} \right) \leq \frac{1}{k} \omega(t) + \log C, \qquad t \geq 0,
\label{red-1-2}
\end{equation}
while 
\eqref{red-2} holds if 
\begin{equation}
\frac{1}{k} \omega(t) \leq \omega_M \left( \frac{t}{h} \right) + \log C, \qquad t \geq 0.
\label{red-2-2}
\end{equation}
Since $ \omega \asymp \omega_M$ for each $M \in \mathfrak{M}_\omega$, condition $(\alpha)$ implies that for all  $M \in \mathfrak{M}_\omega$ and $h > 0$ there are $C, k > 0$ such that \eqref{red-1-2} and \eqref{red-2-2} hold. This shows $(i)$ and $(iii)$. Next, note that
$$
\omega_{M^{k}_\omega}(t) = \sup_{p \in \N} \{ p \log t - \frac{1}{k} \phi^\ast(kp) \} \leq \sup_{r \geq 0} \{ r \log t - \frac{1}{k} \phi^\ast(kr) \} = \frac{1}{k}\omega(t)
$$
for all $k >0$. Hence, for each $k >0$, we have that  \eqref{red-1-2} actually holds with $M = M^k_\omega \in \mathfrak{M}_\omega$, $h = e$, and $C = 1$. This shows $(ii)$. Finally, note that 
$$
M^{h}_{\omega, p+q} \leq M^{2h}_{\omega,p}M^{2h}_{\omega,q}, \qquad p,q \in \N,
$$
for all $h > 0$. By \cite[Lemma 3.12]{R-S16}, we have
$$
2\omega_{M^{2h}_\omega}(t) \leq \omega_{M^{h}_\omega}(t), \qquad t \geq 0,
$$
for all $h > 0$. Hence,
$$
2^n\omega_{M^{1}_\omega}(t) \leq \omega_{M^{1/2^n}_\omega}(t), \qquad t \geq 0,
$$
for all $n \in \Z_+$. Since  $\omega \asymp \omega_{M^1_\omega}$, the latter inequality implies that for all $k >0$ there are  $M \in \mathfrak{M}_\omega$ and $C > 0$  such that \eqref{red-2-2} holds with $h = 1$. This shows $(iv)$.
\end{proof}
\section{Ultradifferentiable classes via almost harmonic functions}\label{sect-ah}
Let $\Omega \subseteq \R^d$ be open and let $\varphi_0,\varphi_1: \Omega \rightarrow \C$. The Cauchy-Kovalevski theorem implies that $\varphi_0, \varphi_1 \in \mathcal{A}(\Omega)$ if and only if for all $\Theta \Subset \Omega$ there exist $V \subseteq \R^{d+1}$ open with $V \cap \R^d = \Theta$ and $\Phi \in \mathcal{H}(V)$ such that $\Phi_{|\Theta} = \varphi_{0|\Theta}$ and $\partial_y \Phi_{|\Theta} = \varphi_{1|\Theta}$. The goal of this section is to characterize the classes $\mathcal{E}^{[\mathfrak{M}]}(\Omega)$ and $\mathcal{D}^{[\mathfrak{M}]}(\Omega)$ in  a similar way by \emph{almost harmonic functions}.  Namely, we shall show the following two results.

\begin{theorem}\label{main-part-1}
Let $\mathfrak{M}$ be a weight matrix satisfying $[\mathfrak{M}.1]^{*}_{\mathrm{w}}$, $[\mathfrak{M}.2]'$, and $(NA)$. Let $\Omega \subseteq \R^d$ be open and let $\varphi_0,\varphi_1: \Omega \rightarrow \C$. Then, $\varphi_0,\varphi_1 \in \mathcal{E}^{[\mathfrak{M}]}(\Omega)$ if and only if for all $\Theta \Subset \Omega$ and for all $M \in \mathfrak{M}$, $h > 0$ (for some $M \in \mathfrak{M}$, $h > 0$) the following holds: For some/all $V \subseteq \R^{d+1}$ open with $V \cap \R^d = \Theta$ there exists $\Phi \in C^2(V)$ such that $\Phi_{|\Theta} = \varphi_{0|\Theta}$, $\partial_y \Phi_{|\Theta} = \varphi_{1|\Theta}$, and
$$
 \sup_{(x,y) \in V} |\Delta\Phi(x,y)| e^{\omega_{M^*}\left(\frac{1}{h|y|}\right)}< \infty.
 $$
\end{theorem}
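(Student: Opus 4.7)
My first step is to apply Lemma~\ref{equivalent} to replace $\mathfrak{M}$ by an equivalent weight matrix each of whose members additionally satisfies $(M.1)^*$; this ensures each $N^*$ is an honest weight sequence with the standard Legendre duality between $N$ and $\omega_{N^*}$, and does not alter the space $\mathcal{E}^{[\mathfrak{M}]}$. I then treat the two implications separately.

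\emph{Sufficiency ($\Leftarrow$).} Fix $\Theta_0 \Subset \Theta$, $x_0 \in \Theta_0$, and $r_0>0$ with $\overline{B((x_0,0),r_0)}\subset V$. For $0<r\leq r_0$, Green's representation on $B_r = B((x_0,0),r)$ decomposes $\Phi = \Phi_h + \Phi_{\mathrm{rem}}$, where $\Phi_h$ is the harmonic extension of $\Phi_{|\partial B_r}$ into $B_r$ and $\Phi_{\mathrm{rem}}(z) = -\int_{B_r} G_r(z,w)\,\Delta\Phi(w)\,dw$. Evaluating $\partial_x^\alpha$ at $(x_0,0)$, the classical Cauchy estimate for harmonic functions gives $|\partial_x^\alpha \Phi_h(x_0,0)| \leq (c_d/r)^{|\alpha|}|\alpha|!\,\sup_{\partial B_r}|\Phi|$. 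For the remainder I would use $|\partial_z^\alpha G_r(z,w)| \lesssim C^{|\alpha|+1}|\alpha|!\,|z-w|^{1-d-|\alpha|}$ on the singular part of $G_r$ (with a regular piece controlled by $|\alpha|!/r^{d+|\alpha|-1}$) together with the hypothesis rewritten as $|\Delta\Phi(w)| \leq A M^*_p(h|w_{d+1}|)^p$ for every $p\in\N$ (by the definition of $\omega_{M^*}$). The key observation is that near $z=(x_0,0)$ one has $|w_{d+1}| \leq |w-z|$, so taking $p=|\alpha|$ the factor $|w_{d+1}|^{|\alpha|}$ in the bound on $\Delta\Phi$ cancels the otherwise non-integrable singularity $|w-z|^{1-d-|\alpha|}$; integration in polar coordinates around $z$ then gives a bound $C^{|\alpha|+1}M_{|\alpha|}h^{|\alpha|}r^2$. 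Optimizing $r = c(M^*_{|\alpha|})^{-1/|\alpha|}/h$, which tends to $0$ as $|\alpha|\to\infty$ by $(NA)$ (so eventually fits in $B_{r_0}$; small $|\alpha|$ are handled with fixed $r$), balances the two pieces and yields $|\partial_x^\alpha \varphi_0(x_0)| \leq C(Ch)^{|\alpha|} M_{|\alpha|}$ uniformly on $\Theta_0$. The same argument after one $y$-differentiation handles $\varphi_1$; tracking the weight-matrix quantifiers gives $\varphi_0,\varphi_1 \in \mathcal{E}^{[\mathfrak{M}]}(\Omega)$.

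\emph{Necessity ($\Rightarrow$).} Given $\varphi_0,\varphi_1 \in \mathcal{E}^{[\mathfrak{M}]}(\Omega)$, $\Theta \Subset \Omega$, and the relevant $(M,h)$, I would construct $\Phi$ on $V = \Theta\times(-r_0,r_0)$ via the truncated Cauchy--Kovalevskaya ansatz
\[
\Phi(x,y) := \sum_{k=0}^\infty \left[\frac{y^{2k}}{(2k)!}(-\Delta_x)^k\varphi_0(x) + \frac{y^{2k+1}}{(2k+1)!}(-\Delta_x)^k\varphi_1(x)\right]\chi(y/r_k),
\]
where $\chi\in C_c^\infty(\R)$ is even, equals $1$ on $[-\tfrac12,\tfrac12]$, has $\supp\chi\subset[-1,1]$, and $r_k \searrow 0$ is essentially of the form $r_k \sim 1/(h_1 m^*_k)$ for an appropriate $h_1$. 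Since $(m^*_p)$ is (almost) increasing to $\infty$, at every $(x,y)$ only finitely many indices $k$ satisfy $r_k \geq |y|$, so the sum is locally finite and $\Phi \in C^\infty$; by inspection $\Phi|_\Theta = \varphi_0$ and $\partial_y\Phi|_\Theta = \varphi_1$. Without the cutoffs, the formal series would satisfy $\Delta\Phi \equiv 0$ (it is the Cauchy--Kovalevskaya power series for $\Delta\Phi=0$ with Cauchy data $\varphi_0,\varphi_1$ on $y=0$); with the cutoffs in place, a telescoping computation gives
\[
\Delta\Phi = \sum_{k\geq 0}\frac{y^{2k}}{(2k)!}(-\Delta_x)^{k+1}\varphi_0\,[\chi_{k+1}-\chi_k] + (\varphi_1\text{-analogue}) + \sum_{k}(\text{terms in }\chi_k',\chi_k''),
\]
with $\chi_k(y):=\chi(y/r_k)$; each summand is supported in an annulus $\simeq r_{k+1}\leq|y|\leq r_k$, so only $O(1)$ indices contribute at any given $(x,y)$. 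Applying the ultradifferentiability bound $|\Delta^{k+1}\varphi_j|\lesssim c^{k+1}h_1^{2k+2}M_{2k+2}$ on $\Theta$ (from $\varphi_j \in \mathcal{B}^{[M]}$ together with $[\mathfrak{M}.2]'$ to pass between $M_{2k+2}$ and $M_{2k}$), the $k$-th contribution at $|y|\sim r_k$ becomes $\lesssim (h_1 r_k)^{2k} M^*_{2k}$ up to polynomial factors; with the chosen $r_k$ this matches the $p=2k$ term in $\inf_p M^*_p(h|y|)^p = e^{-\omega_{M^*}(1/(h|y|))}$, for a suitable $h>0$ derived from $h_1$ via $[\mathfrak{M}.1]^*_{\mathrm{w}}$ and $[\mathfrak{M}.2]'$.

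\emph{Main obstacle.} I expect the technically hardest step to be the necessity direction: identifying cutoff radii $r_k$ for which all three types of contribution to $\Delta\Phi$ (from $\chi_{k+1}-\chi_k$, from $\chi_k'$, and from $\chi_k''$) decay simultaneously at the required rate $e^{-\omega_{M^*}(1/(h|y|))}$, while synchronizing the weight-matrix quantifiers via $[\mathfrak{M}.1]^*_{\mathrm{w}}$ and $[\mathfrak{M}.2]'$. The sufficiency direction is comparatively routine once the Green's-function estimates are in place; the one subtlety there is that the cancellation between the singularity of $\partial_z^\alpha G_r$ and the decay of $\Delta\Phi$ depends crucially on $z$ lying on the hyperplane $y=0$, which is precisely the point at which one wants to evaluate.
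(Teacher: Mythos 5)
Your plan follows the paper's proof essentially verbatim: the reduction via Lemma~\ref{equivalent}, the necessity direction via the cutoff-modified Cauchy--Kovalevskaya series with shrinking radii (the paper's Proposition~\ref{almost-harmonic}, with cutoffs at scale $1/(\mu h q^*_{2p+j})$ for an intermediate sequence $Q$ obtained by applying $[\mathfrak{M}.2]'$ twice), and the sufficiency direction via the Newtonian-potential decomposition, Cauchy-type derivative bounds for the kernel (Lemma~\ref{bounds}), and the cancellation $|\eta|\leq|(x-\xi,\eta)|$ against $e^{-\omega_{M^*}(1/(h|\eta|))}\leq (h|\eta|)^{|\alpha|}M^*_{|\alpha|}$ (Proposition~\ref{sufficient-ah}). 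The one inaccuracy is your claim that only $O(1)$ indices contribute to $\Delta\Phi$ at a given $(x,y)$ --- unboundedly many cutoff radii $r_k$ may fall between $|y|$ and $2|y|$ --- but this is repaired exactly as in the paper by building a geometric factor $2^{-p}$ into the cutoff scale (the extra $\sqrt{2}$ in $\mu=2\sqrt{2d}H_0$), so that the sum over the contributing range is dominated by its largest term.
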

\begin{theorem}\label{main-part-2}
Let $\mathfrak{M}$ be a non-quasianalytic weight matrix satisfying $[\mathfrak{M}.1]^{*}_{\mathrm{w}}$ and $[\mathfrak{M}.2]'$. Let $\Omega \subseteq \R^d$ be open and let $V \subseteq \R^{d+1}$ be open such that $V \cap \R^d = \Omega$. Let $\varphi_0,\varphi_1: \Omega \rightarrow \C$. Then, $\varphi_0,\varphi_1 \in \mathcal{D}^{[\mathfrak{M}]}(\Omega)$ if and only if for all $M \in \mathfrak{M}$, $h > 0$ (for some $M \in \mathfrak{M}$, $h > 0$)  there exists $\Phi \in C^2_c(V)$ such that $\Phi_{|\Omega} = \varphi_{0}$, $\partial_y \Phi_{|\Omega} = \varphi_{1}$, and
$$
 \sup_{(x,y) \in V} |\Delta\Phi(x,y)| e^{\omega_{M^*}\left(\frac{1}{h|y|}\right)}< \infty.
 $$
\end{theorem}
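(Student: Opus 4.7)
The plan is to treat sufficiency as a direct reduction to Theorem \ref{main-part-1} and to concentrate the real work on necessity, where the task is to promote an almost harmonic extension to a \emph{compactly supported} one.

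For sufficiency, suppose $\Phi \in C^2_c(V)$ satisfies the stated hypothesis, and set $\varphi_0 := \Phi_{|\Omega}$ and $\varphi_1 := (\partial_y \Phi)_{|\Omega}$. These are automatically compactly supported in $V \cap \R^d = \Omega$. For every $\Theta \Subset \Omega$ the open set $V_\Theta := V \cap (\Theta \times \R)$ satisfies $V_\Theta \cap \R^d = \Theta$, and $\Phi_{|V_\Theta}$ inherits the almost-harmonic estimate. Applying the corresponding direction of Theorem \ref{main-part-1} yields $\varphi_0, \varphi_1 \in \mathcal{E}^{[\mathfrak{M}]}(\Omega)$, and compact support then promotes this to $\varphi_0, \varphi_1 \in \mathcal{D}^{[\mathfrak{M}]}(\Omega)$.

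For necessity, let $K := \supp \varphi_0 \cup \supp \varphi_1$ and fix $\Theta \Subset \Omega$ with $K$ in its interior. Choose an open $V_0 \subseteq V$ with $V_0 \cap \R^d = \Theta$ and $\overline{V_0} \subset V$; for instance take $V_0 = V \cap (\Theta \times (-\epsilon,\epsilon))$ with $\epsilon$ small. The appropriate direction of Theorem \ref{main-part-1} produces $\Psi \in C^2(V_0)$ with $\Psi_{|\Theta} = \varphi_{0|\Theta}$, $(\partial_y \Psi)_{|\Theta} = \varphi_{1|\Theta}$, and $|\Delta \Psi(x,y)| \leq C e^{-\omega_{M^*}(1/(h|y|))}$ on $V_0$. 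My proposal is to cut off in the $y$-variable alone: pick $\chi \in C^\infty_c(\R)$ with $\chi \equiv 1$ on a neighborhood of $0$ and $\supp \chi$ small enough that the relevant products stay inside $V_0$, and set
$$
\Phi(x,y) := \chi(y)\, \Psi(x,y),
$$
extended by zero outside $V_0$. Because $\chi(0) = 1$ and $\chi'(0) = 0$, both interface conditions $\Phi_{|\Omega} = \varphi_0$ and $(\partial_y \Phi)_{|\Omega} = \varphi_1$ persist. The Leibniz expansion
$$
\Delta \Phi \;=\; \chi\, \Delta \Psi \;+\; 2\chi'\, \partial_y \Psi \;+\; \chi''\, \Psi
$$
has first summand controlled directly by the hypothesis on $\Psi$, while the other two summands are supported on a set $\{|y| \geq y_0\}$ with $y_0 > 0$, on which $e^{-\omega_{M^*}(1/(h|y|))}$ is bounded below by a positive constant; boundedness of $\Psi$ and $\partial_y\Psi$ on compact subsets of $V_0$ then absorbs these terms into the exponential estimate at the cost of enlarging the constant.

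The main technical obstacle is ensuring that $\Phi$ actually lies in $C^2_c(V)$, which requires $\Psi$ to have compact support in the $x$-variable (so that $\chi(y)\Psi(x,y)$ extends by zero outside $V_0$ to a function with compact support in $V$). The cleanest route is to arrange this in the construction underlying Theorem \ref{main-part-1}: a Dyn'kin-type formula expresses $\Psi$ via a finite truncation in $y$ whose terms are built from $\Delta_x^{j} \varphi_0$ and $\Delta_x^{j} \varphi_1$ (both compactly supported in $K$), so that $\supp_x \Psi \subseteq K$ automatically. If the construction only ensures $x$-support inside $\Theta$, one further multiplies by $\chi_1 \in C^\infty_c(\Theta)$ equal to $1$ near $K$; the resulting correction terms $2(\nabla \chi_1)\cdot \nabla_x \Psi$ and $(\Delta_x \chi_1)\Psi$ are supported where $\varphi_0,\varphi_1$ and all their derivatives vanish identically. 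On this region one verifies, using the almost-harmonic identity $\partial_y^2 \Psi = \Delta \Psi - \Delta_x \Psi$ recursively together with Taylor expansion of $\Psi$ about $y = 0$ and the given bound on $\Delta\Psi$, that $|\Psi|$, $|\nabla_x \Psi|$ and $|\partial_y \Psi|$ decay in $|y|$ at precisely the rate needed to match the desired estimate for $\Delta\Phi$. Throughout, the Beurling and Roumieu cases are treated in parallel, since each step preserves the quantifier structure on $(M,h)$.
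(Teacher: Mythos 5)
Your proposal takes essentially the same route as the paper: the direct implication is exactly your ``cleanest route,'' namely Proposition \ref{almost-harmonic-compact-1}, where the explicit modified-series construction of Proposition \ref{almost-harmonic} automatically has $x$-support contained in $K$ (its terms are built from $\Delta^p\varphi_j$), and one multiplies by a cutoff $\psi(y)$ in the $y$-variable alone, the Leibniz correction terms being absorbed because they live away from $y=0$ where the weight is bounded and $\Phi,\partial_y\Phi$ are bounded by \eqref{sobolev-bounds}; the converse is Proposition \ref{sufficient-ah} together with the compact support of $\Phi$, as you say. One caution: your fallback route via a cutoff $\chi_1(x)$ would not go through as sketched, since for a merely $C^2$ almost harmonic extension the Taylor/recursion argument only yields $\Psi=o(y^2)$ and $\nabla_x\Psi=o(y)$ on the region where the data vanish, which falls far short of the required $e^{-\omega_{M^*}\left(\frac{1}{h|y|}\right)}$ decay of $\Delta\Phi$ --- but since you designate the construction-based route as primary, this does not affect the correctness of the argument.
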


The proofs of Theorem \ref{main-part-1} and Theorem \ref{main-part-2}  are divided into several intermediate results.
\begin{proposition}\label{almost-harmonic} 
Let $M$, $N$, and $Q$ be three weight sequences satisfying $(NA)$. Suppose that $Q$ satisfies $(M.1)^{*}$ and
\begin{equation}
M_{p+2} \leq C_0H^p_0Q_p,  \qquad Q_{p+2} \leq C_1H^p_1N_p, \qquad p \in \N,
\label{M2'double}
\end{equation}
for some $C_0,H_0,C_1, H_1 > 0$.  Then, there is $A > 0$ such that for all $\Theta \subseteq \R^d$  open and $h >0$ the following holds: For all $\varphi_0, \varphi_1 \in \mathcal{B}^{M,h}(\Theta)$ there exists $\Phi = \Phi(\varphi_0, \varphi_1) \in C^2(\Theta \times \R)$ such that 
\begin{itemize}
\item[$(i)$] $\Phi_{| \Theta} = \varphi_0$ and $\partial_y \Phi_{| \Theta} = \varphi_1$;
\item[$(ii)$] $\displaystyle ||| \Phi ||| := \sup_{(x,y) \in \Theta \times \R}  |\Delta\Phi(x,y)| e^{\omega_{N^*}\left(\frac{1}{Ah|y|}\right)} < \infty$.
\end{itemize}
Moreover, there is $C > 0$ such that for all $\varphi_0,\varphi_1 \in \mathcal{B}^{M,h}(\Theta)$
\begin{equation}
\max\{ ||| \Phi(\varphi_0, \varphi_1) |||, \max_{\substack{ \alpha \in \N^{d+1}; \\ |\alpha| \leq 1}} \| \partial^{\alpha}\Phi(\varphi_0, \varphi_1)\|_{L^\infty(\Theta\times \R)} \} \leq C\max_{j = 0,1} \{ \| \varphi_j\|_{\mathcal{B}^{M,h}(\Theta)} \}.\label{sobolev-bounds}
\end{equation}
\end{proposition}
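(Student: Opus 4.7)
The construction is a harmonic analogue of the Dyn'kin--Petzsche--Vogt almost-analytic extension. I first define the Cauchy-data sequence
\begin{equation*}
\varphi_{2\ell}:=(-\Delta_x)^\ell\varphi_0,\qquad\varphi_{2\ell+1}:=(-\Delta_x)^\ell\varphi_1\qquad(\ell\in\N),
\end{equation*}
so that $\Delta_x\varphi_k+\varphi_{k+2}=0$ and the formal series $\sum_k(y^k/k!)\varphi_k(x)$ is annihilated by $\Delta=\Delta_x+\partial_y^2$. Expanding $\Delta^\ell$ coordinatewise together with $\varphi_0,\varphi_1\in\mathcal{B}^{M,h}(\Theta)$ yields
\begin{equation*}
\|\partial^\alpha\varphi_k\|_{L^\infty(\Theta)}\leq L^{k+|\alpha|}M_{k+|\alpha|}\max_{j=0,1}\|\varphi_j\|_{\mathcal{B}^{M,h}(\Theta)},
\end{equation*}
with $L$ proportional to $h$ and the proportionality constant depending only on $d$. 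Then, fixing an even cutoff $\chi\in C_c^\infty(\R)$ with $\chi\equiv 1$ on $[-1,1]$ and $\supp\chi\subseteq[-2,2]$, setting $\tau_k:=q_k^*=Q_k^*/Q_{k-1}^*$---which is increasing by $(M.1)^*$ for $Q$ and tends to $\infty$ by $(NA)$---and putting $\chi_k(y):=\chi(Ah\tau_k y)$ for a parameter $A>0$ to be determined, I define
\begin{equation*}
\Phi(x,y):=\sum_{k=0}^\infty\frac{y^k}{k!}\varphi_k(x)\chi_k(y).
\end{equation*}
The sum is locally finite in $y$, and since $\chi_k\equiv 1$, $\chi_k'\equiv 0$ near $y=0$, condition $(i)$ holds. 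The estimate \eqref{sobolev-bounds} follows from combining $|y|\leq 2/(Ah\tau_k)$ on $\supp\chi_k$ with the geometric-mean inequality $\tau_k^k\geq\prod_{j=1}^k\tau_j=Q_k^*$ and the first half of \eqref{M2'double} (in its shifted form $M_k\leq C_0H_0^{k-2}Q_{k-2}$), which together yield $(|y|^k/k!)|\varphi_k|\leq C(2LH_0/(Ah))^k(q_{k-1}q_k)^{-1}$, geometrically convergent as soon as $A\geq 4LH_0/h$ (an $h$-independent choice since $L\propto h$); the one-derivative bound uses that on $\supp\chi_k'$ the factor $Ah\tau_k$ produced by differentiating $\chi_k$ is balanced by $|y|^{-1}\leq Ah\tau_k$.

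The core of the proof is the computation and estimation of $\Delta\Phi$. A Leibniz expansion of $\partial_y^2[y^k\chi_k/k!]$ together with $\Delta_x\varphi_k=-\varphi_{k+2}$ and a reindexing so that the pure-$\chi_k$ contribution telescopes gives
\begin{equation*}
\Delta\Phi(x,y)=\sum_{k\geq 0}\frac{y^k}{k!}\varphi_{k+2}(x)[\chi_{k+2}(y)-\chi_k(y)]+2\sum_{k\geq 1}\frac{y^{k-1}}{(k-1)!}\varphi_k(x)\chi_k'(y)+\sum_{k\geq 0}\frac{y^k}{k!}\varphi_k(x)\chi_k''(y).
\end{equation*}
The $k$-th summand in each of the three sums is concentrated in a shell where $|y|$ is of order $1/(Ah\tau_k)$. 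Applying both parts of \eqref{M2'double} in tandem gives $M_{k+2}\leq C_0C_1H_1^{-2}(H_0H_1)^kN_{k-2}$, so that the $k$-th summand is dominated by $C(Bh|y|)^{k}N_{k-2}/k!=C(Bh|y|)^{k}N_{k-2}^\ast/(k(k-1))$ with $B=LH_0H_1$. For each fixed $y$ the only summands still present are those with $\tau_{k+2}\geq 1/(Ah|y|)$; selecting the integer $p=p(y)$ that realises the infimum defining $e^{-\omega_{N^*}(1/(A'h|y|))}=\inf_p N^*_p(A'h|y|)^p$ (with $A'h=B$), and checking that the total contribution of the shells is dominated by this single optimal term, produces
\begin{equation*}
|\Delta\Phi(x,y)|\leq Ce^{-\omega_{N^*}(1/(A'h|y|))},
\end{equation*}
which is property $(ii)$ after renaming $A'$ as $A$.

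The principal technical obstacle is precisely this last step: converting the shell-by-shell estimates, naturally indexed by the cutoff parameter $k$, into the single bound involving $\omega_{N^*}$. It demands using both inequalities in \eqref{M2'double} in complementary roles---the first contracting the coefficient $\varphi_{k+2}$ from $M$ to $Q$, the second translating $Q_k$ to $N_{k-2}$---together with the monotonicity of $\tau_k$ supplied by $(M.1)^*$ for $Q$, which underpins both the geometric-mean bound $\tau_k^k\geq Q_k^*$ and the locally finite nature of the series defining $\Phi$. The double index shift built into \eqref{M2'double} is exactly what allows the coefficient data to travel from $M_{k+2}$ through $Q_k$ to $N_{k-2}$; once this discrete optimization has been carried out, the appearance of $\omega_{N^*}$ is a formal consequence of its definition $\omega_{N^*}(t)=\sup_p\log(t^p/N^*_p)$.
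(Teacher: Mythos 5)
Your construction coincides with the paper's: the modified series $\sum_k (y^k/k!)\varphi_k(x)\,\chi(\mathrm{const}\cdot hq^*_k y)$ with cutoffs scaled by the quotients $q^*_k$ of $Q^*$, the same telescoped three-term decomposition of $\Delta\Phi$, and the same localization to the shell where the cutoffs are active. The gap is in the step you yourself single out as the principal obstacle. You convert the coefficients all the way from $M_{k+2}$ to $N_{k-2}$ first, using both halves of \eqref{M2'double} in tandem, and then propose to dominate the shell contribution by ``the single optimal term'' realising $\inf_p N^*_p(A'h|y|)^p=e^{-\omega_{N^*}(1/(A'h|y|))}$. But that infimum is a \emph{lower} bound for each quantity of the form $t^pN^*_p$; to bound the shell terms \emph{above} by it you need the minimizer of $p\mapsto t^pN^*_p$ to lie in (or adjacent to) the shell and the terms to be monotone on either side of it. Neither is available for $N^*$: only $Q$ is assumed to satisfy $(M.1)^{*}$, so only $p\mapsto t^pQ^*_p$ is known to be unimodal, and the shell --- being defined by the cutoff scales $q^*_k$ --- is pinned to the minimizer of $t^pQ^*_p$, not of $t^pN^*_p$. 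This is exactly what the paper's auxiliary function $\Gamma(t)=\min\{p \in \N \, : \, q^*_{p+1}\geq 1/t\}$ and the identity $t^{\Gamma(t)}Q^*_{\Gamma(t)}=e^{-\omega_{Q^*}(1/t)}$ encode. The repair is to reorder your two uses of \eqref{M2'double}: apply only the first half to pass from $M_{k+2}$ to $Q^*_{k}$, carry out the discrete optimization against $Q^*$ on the shell (monotonicity there dominates everything by the endpoint value, which is $e^{-\omega_{Q^*}(1/(\mu h|y|))}$ up to a factor $(\mu h|y|)^{-2}$), and only afterwards convert $e^{-\omega_{Q^*}}$ into $e^{-\omega_{N^*}}$ via the second half of \eqref{M2'double}; the index shift by $2$ in that inequality is precisely what absorbs the leftover factor $(\mu h|y|)^{-2}$ and produces the constant $A=\mu H_1$.

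A secondary point: concluding $(i)$ and $\Phi\in C^2(\Theta\times\R)$ from ``$\chi_k\equiv 1$ near $y=0$'' is not sufficient, because the neighbourhoods on which $\chi_k\equiv 1$ shrink to $\{0\}$ as $k\to\infty$, so the series never agrees with the unmodified Cauchy--Kovalevski series on a fixed strip. One must verify that $\partial^\alpha\Phi$, $|\alpha|\leq 2$, extends continuously across $y=0$ with the correct limits; this is the content of the paper's estimates \eqref{tech-1}--\eqref{tech-2} and is a computation of the same kind as your sup-norm bounds, but it does need to be written out.
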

For $\varphi_0, \varphi_1 \in \mathcal{B}^{\{p!\}}(\Theta)$ the series (cf.\ \cite[p.\ 330]{Hormander})
\begin{equation}
\Phi_0(x,y) = \sum_{p=0}^\infty \frac{y^{2p}}{(2p)!} (-\Delta)^p \varphi_0(x), \qquad \Phi_1(x,y) = \sum_{p=0}^\infty \frac{y^{2p+1}}{(2p+1)!} (-\Delta)^p \varphi_1(x),
\label{harmonic-series}
\end{equation}
are absolutely convergent in some open subset $V$ of $\R^{d+1}$ with $V \cap \R^d = \Theta$ and $\Phi = \Phi_0 + \Phi_1$ is a harmonic function on $V$ such that $\Phi_{| \Theta} = \varphi_0$ and $\partial_y \Phi_{| \Theta} = \varphi_1$. The idea of the proof of Proposition \ref{almost-harmonic} is to suitably modify the series in \eqref{harmonic-series}. This approach is inspired by Petzsche's construction of almost analytic extensions by means of modified Taylor series \cite[Proposition 2.2]{Petzsche1984}.
Furthermore, in our estimates we follow the same technique as in \cite[Proposition 3.12]{R-S19}, which is essentially due to Dyn'kin \cite{Dynkin1980, Dynkin1993}.

\begin{proof}[Proof of Proposition \ref{almost-harmonic}]  
Pick  $\chi \in \mathcal{D}(\R)$  such that $\operatorname{supp} \chi \subseteq [-2,2]$ and $\chi \equiv 1$ on $[-1,1]$. Set $\mu = 2 \sqrt{2d}H_0$. 
Let $j =0,1$.
For $\varphi \in \mathcal{B}^{M,h}(\Theta)$ we  define
\begin{equation}
\Phi_j(x,y) =  \Phi_j(\varphi)(x,y) =	\sum_{p=0}^\infty \frac{y^{2p +j}}{(2p +j)!} (-\Delta)^p \varphi(x) \chi(\mu h q^*_{2p+j} y), \quad (x,y) \in \Theta \times \R.
\label{formula-P}
\end{equation}
Since $q^*_p \nearrow \infty$, the above series is finite on $\Theta \times\{ y \in \R \, | \, |y| \geq \varepsilon \}$ for each $\varepsilon > 0$. Hence, $\Phi_j \in C^\infty(\Theta \times (\R \backslash \{0\}))$. 
Set $A = 2 \sqrt{2d} H_0H_1= \mu H_{1}$.
We claim that:
\begin{equation}
\limsup_{y \to 0} \sup_{x \in \Theta} |\Delta \Phi_j(x,y)| e^{\omega_{N^*}\left(\frac{1}{Ah|y|}\right)} < \infty,
\label{tech-0}
\end{equation}
\begin{equation}
\lim_{y \to 0} \partial^\alpha_x \Phi_j(x,y) = \delta_{j,0} \partial^\alpha \varphi(x) \mbox{ uniformly for $x \in \Theta$, $\alpha \in \N^d, |\alpha| \leq 2$},
\label{tech-1}
\end{equation}
\begin{equation}
\lim_{y \to 0} \partial_y  \partial^\alpha_x  \Phi_j(x,y) = \delta_{j,1} \partial^\alpha \varphi(x) \mbox{ uniformly for $x \in \Theta$, $\alpha \in \N^d, |\alpha| \leq 1$},
\label{tech-2}
\end{equation}
where $\delta_{j,k}$ denotes the Kronecker delta. These properties imply that 
\begin{equation}
\Phi(\varphi_0, \varphi_1) = \Phi_0(\varphi_0) + \Phi_1(\varphi_1), \qquad \varphi_0, \varphi_1 \in \mathcal{B}^{M,h}(\Theta),
\label{formula-P-1}
\end{equation}
belongs to $C^2(\Theta \times \R)$ and satisfies $(i)$ and $(ii)$. 
We now prove the above claims. In the rest of the proof $C$ will denote a  positive constant that is independent of $\varphi$  but may vary from place to place. We introduce the following auxiliary function
$$
 \Gamma(t) = \min \left\{ p \in \N \, | \, q^*_{p+1} \geq \frac{1}{t} \right\}, \qquad  0 < t \leq \frac{1}{q^*_1}.
 $$
Fix $0 < t \leq 1/q^*_1$. Then, $p \leq \Gamma(t)$ if and only if $tq^*_p < 1$ for all $p \in \Z_+$. 
Hence, the function $p \mapsto t^pQ^{*}_p$ is decreasing for $p \leq \Gamma(t)$ and increasing for $p \geq \Gamma(t)$. Consequently,  $t^{\Gamma(t)} Q^{*}_{\Gamma(t)} = e^{-\omega_{Q^{*}}\left (\frac{1}{t} \right)}$. 
We start by showing \eqref{tech-0}. Note that $\Delta \Phi_j = S_1 + S_2  + S_3$, where
\begin{gather*}
S_1(x,y) = \sum_{p=0}^\infty \frac{y^{2p+j}}{(2p+j)!} (-\Delta)^{p+1} \varphi(x) (\chi(\mu h q^*_{2p+ 2 +j} y) - \chi(\mu h q^*_{2p+j} y)),\\
S_2 (x,y)= 2\sum_{p=1-j}^\infty \frac{y^{2p+j-1}}{(2p+j-1)!} (-\Delta)^{p} \varphi(x) \mu h q^*_{2p+j} \chi'(\mu h q^*_{2p+ j} y),\\
S_3(x,y) = \sum_{p=0}^\infty \frac{y^{2p+j}}{(2p+j)!} (-\Delta)^{p} \varphi(x) ( \mu h q^*_{2p+j})^2\chi''( \mu h q^*_{2p+j} y).
\end{gather*}
 For all $(x,y) \in \Theta \times (\R \backslash \{0\}))$ with $|y|$ small enough, we have that
\begin{align*}
 | S_1(x,y)| &\leq  \sum_{\Gamma(\mu h|y|) - 2 < 2p + j \leq \Gamma(\mu h|y|/2)}  \frac{|y|^{2p+j}}{(2p+j)!} |\Delta^{p+1} \varphi(x)| |\chi(\mu h q^*_{2p+ 2 +j} y) - \chi(\mu h q^*_{2p+j} y)| \\
& \leq C \|\varphi\|_{\mathcal{B}^{M,h}(\Theta)}\sum_{\Gamma(\mu h|y|) - 2 < 2p + j \leq \Gamma(\mu h|y|/2)}\frac{|y|^{2p+j}}{(2p+j)!} (\sqrt{d}h)^{2p+2} M_{2p+2}  \\
& \leq C \|\varphi\|_{\mathcal{B}^{M,h}(\Theta)}\sum_{\Gamma(\mu h|y|) - 2 < 2p + j \leq \Gamma(\mu h|y|/2)} \frac{1}{2^p}(\mu h|y|/2)^{2p+j} Q^*_{2p+j}\\
& \leq C \|\varphi\|_{\mathcal{B}^{M,h}(\Theta)} (\mu h|y|/2)^{\Gamma(\mu h|y|) - 2} Q^*_{\Gamma(\mu h|y|) }\\
& \leq C \|\varphi\|_{\mathcal{B}^{M,h}(\Theta)} (\mu h|y|)^{-2} e^{-\omega_{Q^*}\left (\frac{1}{\mu h|y|} \right)}\\
& \leq C \|\varphi\|_{\mathcal{B}^{M,h}(\Theta)}  e^{-\omega_{N^*}\left (\frac{1}{A h |y|} \right)}.
\end{align*}
Likewise, for all $(x,y) \in \Theta \times (\R \backslash \{0\}))$ with $|y|$ small enough, one gets
\begin{align*}
| S_2(x,y)| &\leq C\sum_{\Gamma(\mu h|y|) < 2p +j \leq \Gamma(\mu h|y|/2)} \frac{|y|^{2p+j -1}}{(2p+j-1)!} |\Delta^{p} \varphi(x)|  q^*_{2p+j} |\chi'(\mu h q^*_{2p+j} y)| \\
& \leq C \|\varphi\|_{\mathcal{B}^{M,h}(\Theta)} \sum_{\Gamma(\mu h|y|) < 2p + j \leq \Gamma(\mu h|y|/2)} \frac{|y|^{2p+j -2}}{(2p+j-2)!}  (\sqrt{d}h)^{2p}M_{2p} \\
& \leq C \|\varphi\|_{\mathcal{B}^{M,h}(\Theta)}\sum_{\Gamma(\mu h|y|)  < 2p + j \leq \Gamma(\mu h|y|/2)} \frac{1}{2^p}(\mu h|y|/2)^{2p+j-2} Q^*_{2p+j-2}\\
& \leq C \|\varphi\|_{\mathcal{B}^{M,h}(\Theta)} (\mu h|y|/2)^{\Gamma(\mu h|y|) - 2} Q^*_{\Gamma(\mu h|y|) }\\
& \leq C \|\varphi\|_{\mathcal{B}^{M,h}(\Theta)}  e^{-\omega_{N^*}\left (\frac{1}{A h |y|} \right)}\end{align*}
and
\begin{align*}
|S_3(x,y)| &\leq C \sum_{\Gamma(\mu h|y|) < 2p +j \leq \Gamma(\mu h|y|/2)} \frac{|y|^{2p+j}}{(2p+j)!} |\Delta^{p} \varphi(x)|  (q^*_{2p+j})^2 |\chi''( \mu h q^*_{2p+j} y)| \\
& \leq C \|\varphi\|_{\mathcal{B}^{M,h}(\Theta)} \sum_{\Gamma(\mu h|y|) < 2p+j \leq \Gamma(\mu h|y|/2)} \frac{|y|^{2p+j-2}}{(2p+j-2)!} (\sqrt{d}h)^{2p}M_{2p} \\
& \leq C \|\varphi\|_{\mathcal{B}^{M,h}(\Theta)}  e^{-\omega_{N^*}\left (\frac{1}{A h |y|} \right)}.
\end{align*}
Next, we show \eqref{tech-1} and \eqref{tech-2}. We only treat the case $j = 0$ as the case $j =1$ is similar.  Let $\alpha \in \N^d$, $|\alpha| \leq 2$, be arbitrary. For all $(x,y) \in \Theta \times \R\backslash\{0\}$ with $|y|$ small enough it holds that
\begin{align*}
 |\partial^\alpha_x \Phi_0(x,y) - \partial^\alpha \varphi(x) | & \leq  \sum_{1 \leq p \leq \Gamma(\mu h|y|/2)/2}  \frac{|y|^{2p}}{(2p)!} |\Delta^p \partial^\alpha \varphi(x)| |\chi(\mu h q^*_{2p} y)| \\
& \leq C \|\varphi\|_{\mathcal{B}^{M,h}(\Theta)}  \sum_{1 \leq p \leq \Gamma(\mu h|y|/2)/2}  \frac{|y|^{2p}}{(2p)!} (\sqrt{d}h)^{2p}M_{2p+2} \\
& \leq C \|\varphi\|_{\mathcal{B}^{M,h}(\Theta)}   \sum_{1 \leq p \leq \Gamma(\mu h|y|/2)/2} \frac{1}{2^p} (\mu h|y|/2)^{2p} Q^*_{2p} \\
& \leq  |y|^2 C \|\varphi\|_{\mathcal{B}^{M,h}(\Theta)}.  
\end{align*}
Similarly, for all $\alpha \in \N^d$, $|\alpha| \leq 1$, and $(x,y) \in \Theta \times \R$ with $|y|$ small enough, we have 
\begin{align*}
| \partial_y\partial^\alpha_x \Phi_0(x,y)| &\leq \sum_{1 \leq p \leq \Gamma(\mu h|y|/2)/2}  \Big( \frac{|y|^{2p-1}}{(2p-1)!} | \Delta^p\partial^\alpha\varphi(x) | |\chi(\mu h q^*_{2p} y)| \\
&+ \frac{|y|^{2p}}{(2p)!} | \Delta^p\partial^\alpha\varphi(x)|\mu hq^*_{2p} |\chi'(\mu h q^*_{2p} y)| \Big)\\
&\leq  C \|\varphi\|_{\mathcal{B}^{M,h}(\Theta)}  \sum_{1 \leq p \leq \Gamma(\mu h|y|/2)/2}  \frac{|y|^{2p-1}}{(2p-1)!}  (\sqrt{d}h)^{2p}M_{2p+1} \\
& \leq C \|\varphi\|_{\mathcal{B}^{M,h}(\Theta)}   \sum_{1 \leq p \leq \Gamma(\mu h|y|/2)/2} \frac{1}{2^p} (\mu h|y|/2)^{2p-1} Q^*_{2p-1} \\
& \leq  |y| C \|\varphi\|_{\mathcal{B}^{M,h}(\Theta)}.  
\end{align*}
Finally, \eqref{sobolev-bounds} follows from an inspection of the estimates in the proofs of \eqref{tech-0}-\eqref{tech-2}.
\end{proof}
\begin{proposition}\label{almost-harmonic-compact-1} 
Let $M$, $N$, and $Q$ be three non-quasianalytic weight sequences satisfying \eqref{M2'double}. Suppose that $Q$ satisfies $(M.1)^{*}$. There is $A>0$ such that for all  $K \subsetc \R^d$ and $\varepsilon, h > 0$ the following holds: For all $\varphi_0, \varphi_1 \in \mathcal{D}^{M,h}_K$ there exists $\Phi = \Phi(\varphi_0, \varphi_1) \in C^2(\R^{d+1})$ with $\operatorname{supp} \Phi \subseteq K \times [-\varepsilon, \varepsilon]$ such that 
\begin{itemize}
\item[$(i)$] $\Phi_{| \Theta} = \varphi_0$ and $\partial_y \Phi_{| \Theta} = \varphi_1$;
\item[$(ii)$] $\displaystyle ||| \Phi ||| = \sup_{(x,y) \in \R^{d+1}}  |\Delta\Phi(x,y)| e^{\omega_{N^*}\left(\frac{1}{Ah|y|}\right)} < \infty$.
\end{itemize}
Moreover, there is $C > 0$ such that for all $\varphi_0,\varphi_1 \in \mathcal{D}^{M,h}_K$
$$
\max\{ ||| \Phi(\varphi_0, \varphi_1) |||, \max_{\substack{ \alpha \in \N^{d+1}; \\ |\alpha| \leq 1}} \| \partial^{\alpha}\Phi(\varphi_0, \varphi_1)\|_{L^\infty(\R^{d+1})} \} \leq C\max_{j = 0,1} \{ \| \varphi_j\|_{\mathcal{B}^{M,h}(\R^d)} \}.
$$
\end{proposition}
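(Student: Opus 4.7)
The plan is to reduce to the non-compact version, namely Proposition \ref{almost-harmonic}, applied with $\Theta = \R^d$, and then to truncate in the $y$-variable by a scalar cutoff. Since $\varphi_0, \varphi_1 \in \mathcal{D}^{M,h}_K \subseteq \mathcal{B}^{M,h}(\R^d)$, Proposition \ref{almost-harmonic} (whose hypotheses are implied by our stronger ones, because non-quasianalytic weight sequences satisfy $(NA)$) produces $\Phi_0 = \Phi_0(\varphi_0, \varphi_1) \in C^2(\R^{d+1})$ with $\Phi_{0|\R^d} = \varphi_0$, $\partial_y\Phi_{0|\R^d} = \varphi_1$, and $|||\Phi_0|||$ finite. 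Crucially, from the explicit formula \eqref{formula-P} in the proof of Proposition \ref{almost-harmonic}, each summand of $\Phi_0$ carries the factor $(-\Delta)^p\varphi_j(x)$ supported in $K$, so automatically $\operatorname{supp}\Phi_0 \subseteq K\times \R$. Thus the $x$-support is already under control; only the $y$-extent needs to be shrunk to $[-\varepsilon,\varepsilon]$.

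To achieve this, I would fix a cutoff $\psi \in \mathcal{D}(\R)$ with $\psi \equiv 1$ on $[-1/2,1/2]$ and $\operatorname{supp}\psi\subseteq [-1,1]$, and set $\Phi(x,y) := \psi(y/\varepsilon)\Phi_0(x,y)$. Then $\operatorname{supp}\Phi \subseteq K\times[-\varepsilon,\varepsilon]$ and $\Phi \in C^2(\R^{d+1})$. Since $\psi(0)=1$ and $\psi'(0)=0$, a direct computation at $y=0$ yields $\Phi_{|\R^d} = \varphi_0$ and $\partial_y\Phi_{|\R^d} = \varphi_1$, establishing $(i)$. The $L^\infty$-bounds in the moreover statement follow immediately from \eqref{sobolev-bounds} for $\Phi_0$ together with $\|\psi\|_{C^1}<\infty$, at the cost of a factor depending on $\varepsilon$ absorbed in $C$.

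The only real work is the estimate for $|||\Phi|||$. Expanding by Leibniz,
\[
\Delta\Phi = \psi(y/\varepsilon)\Delta\Phi_0 + \tfrac{2}{\varepsilon}\psi'(y/\varepsilon)\partial_y\Phi_0 + \tfrac{1}{\varepsilon^2}\psi''(y/\varepsilon)\Phi_0.
\]
The first term is controlled pointwise by $|\Delta\Phi_0|$, so multiplying by $e^{\omega_{N^*}(1/(Ah|y|))}$ and taking the sup gives at most $|||\Phi_0|||$. The remaining two terms are supported in the region $|y|\in[\varepsilon/2,\varepsilon]$, where $1/(Ah|y|) \leq 2/(Ah\varepsilon)$ and therefore $e^{\omega_{N^*}(1/(Ah|y|))} \leq e^{\omega_{N^*}(2/(Ah\varepsilon))}$ is a finite constant depending only on $\varepsilon$, $h$, $A$, $N$. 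Combining this with the $L^\infty$-bounds on $\Phi_0$ and $\partial_y\Phi_0$ from \eqref{sobolev-bounds} yields the claimed finiteness of $|||\Phi|||$ together with the required linear bound in $\max_j\|\varphi_j\|_{\mathcal{B}^{M,h}(\R^d)}$. No step looks genuinely difficult once one observes the built-in $x$-compactness of the series \eqref{formula-P}; the only subtle point is remembering that $\psi'(0)=0$ is needed so that the cutoff does not spoil the second initial condition $\partial_y\Phi_{|\R^d}=\varphi_1$.
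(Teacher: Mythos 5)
Your proposal is correct and follows essentially the same route as the paper: apply Proposition \ref{almost-harmonic} with $\Theta=\R^d$, observe that the series \eqref{formula-P} gives $x$-support in $K$ for free, and multiply by a $y$-cutoff equal to $1$ near $0$ and supported in $[-\varepsilon,\varepsilon]$. The paper leaves the Leibniz estimate for $|||\Phi|||$ implicit, whereas you spell it out; the content is the same.
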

\begin{proof}
Choose $\psi \in \mathcal{D}(\R)$ such that $\operatorname{supp} \psi \subseteq [-\varepsilon,\varepsilon]$ and  $\psi \equiv 1$ on a neighborhood of $0$. Let $\Phi = \Phi(\varphi_0, \varphi_1)$ be the function from Proposition \ref{almost-harmonic} but call it $\widetilde{\Phi}$ instead of $\Phi$. Set $\Phi(x,y) = \psi(y) \widetilde{\Phi}(x,y)$. The definition of $\widetilde{\Phi}$ (see \eqref{formula-P} and \eqref{formula-P-1}) implies that $\operatorname{supp} \Phi \subseteq K \times [-\varepsilon, \varepsilon]$.  Since  $\widetilde{\Phi}$ satisfies the conditions of Proposition  for $\Omega = \R^d$, $\Phi$ satisfies all requirements.
\end{proof}
\begin{proposition}\label{sufficient-ah}
Let $M$ be a weight sequence satisfying $(NA)$. There is $A>0$ such that for all $V \subseteq \R^{d+1}$ open and $h > 0$ the following holds: Let $\Phi \in C^2(V)$ be such that
\begin{equation}
 \sup_{(x,y) \in V} |\Delta\Phi(x,y)| e^{\omega_{M^*}\left(\frac{1}{h|y|}\right)} < \infty.
\label{bound-almost-harmonic}
\end{equation}
Then, both $\Phi_{| \Theta}$ and $\partial_y \Phi_{|\Theta}$ belong to $\mathcal{B}^{M,Ah}(\Theta)$ for all $\Theta \Subset V \cap \R^d$.
\end{proposition}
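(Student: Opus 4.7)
The plan is to mimic, at the level of the Laplacian, the Cauchy--Pompeiu trick from almost analytic extension theory: cut off $\Phi$ to have compact support in $V$, express it via convolution with the fundamental solution $E$ of $-\Delta$ on $\R^{d+1}$, and split the resulting formula into a piece that is real analytic in $y_0$ (giving analytic-type bounds on derivatives at $(x_0, 0)$) and a Newtonian-potential piece whose derivatives inherit the almost-harmonicity decay of $\Delta\Phi$.

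First, fix $\Theta \Subset V \cap \R^d$ and pick $\chi \in C^\infty_c(V)$ with $\chi \equiv 1$ on an open neighborhood $W$ of $\Theta \times \{0\}$. Since $\chi\Phi \in C^2_c(\R^{d+1})$, the identity $\chi\Phi = -E * \Delta(\chi\Phi)$ and the expansion $\Delta(\chi\Phi) = \chi\Delta\Phi + 2\nabla\chi\cdot\nabla\Phi + \Phi\Delta\chi$ yield, for $y_0 \in W$, $\Phi(y_0) = I_1(y_0) + I_2(y_0)$ where
$$
I_1(y_0) = -\int E(y_0 - z)\chi(z)\Delta\Phi(z) \, dz, \qquad I_2(y_0) = -\int E(y_0 - z)\bigl[2\nabla\chi\cdot\nabla\Phi + \Phi\Delta\chi\bigr](z) \, dz.
$$
The integrand of $I_2$ is supported in $\supp(\nabla\chi) \subseteq V \setminus W$, which lies at some positive distance $\delta$ from $\Theta \times \{0\}$; hence $I_2$ is real analytic in $y_0$ on a neighborhood of $\Theta \times \{0\}$ with uniform bounds $|\partial^\beta I_2(x_0, 0)| \leq C |\beta|!/\delta^{|\beta|}$. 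Applying $(NA)$ (i.e.\ $p! \leq C_L L^p M_p$ for every $L > 0$) with $L$ chosen suitably small converts this into $|\partial^\beta I_2(x_0, 0)| \leq C(Ah)^{|\beta|} M_{|\beta|}$, for any fixed $A > 0$.

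The main estimate is for $I_1$. Using $|\partial^\beta_{y_0} E(y_0 - z)| \leq C C_d^{|\beta|} |\beta|! \, |y_0 - z|^{1-d-|\beta|}$ (in dimensions $d+1 \geq 3$; the logarithmic case $d+1=2$ yields the same integrability exponent, while $d=0$ is trivial) together with the hypothesis $|\Delta\Phi(z)| \leq C' e^{-\omega_{M^*}(1/(h|z_{d+1}|))}$, I pass to polar coordinates $z = (x_0, 0) + s\theta$, $s > 0$, $\theta \in S^d$. Since $|\theta_{d+1}| \leq 1$ and $\omega_{M^*}$ is increasing, $e^{-\omega_{M^*}(1/(hs|\theta_{d+1}|))} \leq e^{-\omega_{M^*}(1/(hs))}$, so the bound for $|\partial^\beta I_1(x_0, 0)|$ reduces, after integrating out the angular part, to
$$
C'' C_d^{|\beta|} |\beta|! \int_0^R s^{1-|\beta|} e^{-\omega_{M^*}(1/(hs))} \, ds,
$$
where $R$ is the diameter of $\supp\chi$. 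The crucial step is to use the definition of the associated function with $p = |\beta|$, $e^{-\omega_{M^*}(1/(hs))} \leq M^*_{|\beta|}(hs)^{|\beta|}$; together with $|\beta|! M^*_{|\beta|} = M_{|\beta|}$ and $\int_0^R s\, ds = R^2/2$, this yields $|\partial^\beta I_1(x_0, 0)| \leq C''' (C_d h)^{|\beta|} M_{|\beta|}$, uniformly in $x_0 \in \Theta$.

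Combining the two estimates proves $\Phi|_\Theta \in \mathcal{B}^{M, Ah}(\Theta)$ for a constant $A$ depending only on $M$ and $d$; the analogous bound for $\partial_y \Phi|_\Theta$ follows by applying one additional $\partial_{y_0, d+1}$ to the representation before evaluating at $y_0 = (x_0, 0)$, which only shifts $|\beta|$ by one in the same computation. The only technical nuisance I expect is the routine verification of the derivative bound $|\partial^\beta E(w)| \leq C C_d^{|\beta|} |\beta|! \, |w|^{1-d-|\beta|}$, which is elementary by induction on $|\beta|$ from the explicit form of the fundamental solution.
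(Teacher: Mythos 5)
Your approach is essentially the paper's. The paper likewise represents $\Phi$ near a point of $\Theta$ as a Newtonian potential of $\Delta\Phi$ plus a harmonic remainder (it writes $\Phi=\int_{B}E(\cdot-\zeta)\Delta\Phi(\zeta)\,{\rm d}\zeta+U$ on a small ball around each $x_0$ instead of cutting off with $\chi$, the harmonic $U$ playing the role of your $I_2$), uses the same Cauchy-type bounds $|\partial^\alpha E(w)|\le CH^{|\alpha|}|\alpha|!\,|w|^{1-d-|\alpha|}$, and cancels the factorial against the decay of $\Delta\Phi$ via the same inequality $e^{-\omega_{M^*}(1/(hs))}\le M^*_p(hs)^p$. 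So the decomposition, the key lemma, and the main estimate all coincide; the cut-off versus local-ball presentation is immaterial.

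One step, as you wrote it, would fail: the passage to $\partial_y\Phi$. If you literally ``shift $|\beta|$ by one in the same computation'', i.e.\ keep $p=|\beta|=|\alpha|+1$ in the associated-function inequality, the bound you obtain is $|\beta|!\,M^*_{|\beta|}\,h^{|\beta|}=h\,M_{|\alpha|+1}\,h^{|\alpha|}$, and $M_{|\alpha|+1}$ is not $O(H^{|\alpha|}M_{|\alpha|})$ for a weight sequence that is only assumed to satisfy $(NA)$ --- that would require $(M.2)'$, which this proposition does not assume. The repair is to decouple $p$ from $|\beta|$: since $e^{-\omega_{M^*}(1/(hs))}\le M^*_p(hs)^p$ holds for \emph{every} $p$, take $p=|\alpha|$; the radial integrand then becomes $s^{-d-|\alpha|}\cdot(hs)^{|\alpha|}\cdot s^{d}=h^{|\alpha|}s^{0}$, still integrable on $(0,R)$, and the prefactor is $(|\alpha|+1)!\,M^*_{|\alpha|}=(|\alpha|+1)M_{|\alpha|}$, with $|\alpha|+1\le 2^{|\alpha|+1}$ absorbed into $A^{|\alpha|}$. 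This is in effect exactly what the paper does: it works with the Poisson kernel $P=\partial_yE$ and differentiates it only in $x$, so the factorial stays at $|\alpha|!$ while the singularity has order $|\alpha|+d$, and the same cancellation leaves the locally integrable kernel $|w|^{-d}$. Apart from this, your argument is sound; in particular the decay of $\Delta\Phi$ near $y=0$ is precisely what legitimizes differentiating under the integral sign for $|\beta|\ge 2$, where $\partial^\beta E$ alone is no longer locally integrable, and your use of $(NA)$ to absorb the analytic bounds on $I_2$ into $C(Ah)^{|\beta|}M_{|\beta|}$ matches the paper's appeal to $\mathcal{A}\subset\mathcal{E}^{(M)}$.
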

We need some preparation for the proof of Proposition \ref{sufficient-ah}. Consider the following fundamental solution of the Laplacian
$$
E(x,y) =  \frac{1}{2\pi} \log |(x,y)|, \qquad (x,y) \in \R^2 \backslash \{0\},
$$
and for $d > 1$
$$
E(x,y) = \frac{-1}{(d-1)c_{d+1}|(x,y)|^{d-1}},  \qquad (x,y) \in \R^{d+1} \backslash \{0\},
$$
where $c_{d+1}$ denotes the area of the unit sphere in $\R^{d+1}$. The Poisson kernel is given by
$$
P(x,y) = \partial_yE(x,y) =  \frac{y}{c_{d+1}|(x,y)|^{d+1}}, \qquad (x,y) \in \R^{d+1} \backslash \{0\}.
$$
We need the following bounds for the derivatives of $E$ and $P$.
\begin{lemma}\label{bounds} \mbox{}
\begin{itemize} 
\item[$(i)$] There are $C, H > 0$ such that
$$
 | \partial^n_x E(x,y)| \leq   \frac{CH^{n}n!\max\{1,|\log |(x,y)||\}}{ |(x,y)|^n}, \qquad n \in \N, (x,y) \in \R^{2} \backslash \{0\},
$$
and for $d > 1$
$$
 | \partial^\alpha_x E(x,y)| \leq  \frac{CH^{|\alpha|}|\alpha|!}{|(x,y)|^{|\alpha|+d-1}}, \qquad \alpha \in \N^d, (x,y) \in \R^{d+1} \backslash \{0\}.
$$
\item[$(ii)$] There are $C,H > 0$ such that
$$
 | \partial^\alpha_x P(x,y)| \leq  \frac{CH^{|\alpha|}|\alpha|!}{|(x,y)|^{|\alpha|+d}}, \qquad \alpha \in \N^d, (x,y) \in \R^{d+1} \backslash \{0\}.
$$
\end{itemize}
\end{lemma}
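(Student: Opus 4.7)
The plan is to reduce both bounds to Cauchy's integral formula, applied to the natural holomorphic extension of $E(\cdot,y)$ and $P(\cdot,y)$ in the $x$-variable (with $y$ held as a real parameter), on a polydisc about $x$ whose polyradius is a fixed fraction of $|(x,y)|$. This is the standard mechanism producing factorial-type derivative bounds for the real-analytic functions $x \mapsto (|x|^2+y^2)^{-k/2}$ away from the origin.

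For part $(i)$ with $d > 1$ and for part $(ii)$, I would consider the holomorphic extension
$$
F_k(z) = (z \cdot z + y^2)^{-k/2}, \qquad z \cdot z := z_1^2 + \cdots + z_d^2,
$$
with $k = d-1$ for $E$ and $k = d+1$ for $P/y$. Fixing $(x,y) \neq 0$ and writing $z = x + \zeta$, the identity
$$
z \cdot z + y^2 = |(x,y)|^2 + 2 x \cdot \zeta + \zeta \cdot \zeta
$$
together with the Cauchy-Schwarz bound $|x \cdot \zeta| \leq |x|\, |\zeta|$ shows that, for $|\zeta_i| < r$ with $r = |(x,y)|/(4\sqrt{d})$, the two perturbation terms are bounded by $|(x,y)|^2/2$ and $|(x,y)|^2/16$ respectively, giving $|z \cdot z + y^2| \geq |(x,y)|^2/3$. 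Consequently $|F_k(z)| \leq 3^{k/2}|(x,y)|^{-k}$ on the polydisc $\{z : |z_i - x_i| < r\}$, and Cauchy's inequalities yield
$$
|\partial^\alpha F_k(x)| \leq \frac{\alpha!}{r^{|\alpha|}} \sup |F_k| \leq C H^{|\alpha|} |\alpha|!\, |(x,y)|^{-|\alpha|-k},
$$
upon using $\alpha! \leq |\alpha|!$. Part $(i)$ for $d > 1$ then follows directly, and for $(ii)$ the extra factor $|y|/c_{d+1} \leq |(x,y)|/c_{d+1}$ absorbs exactly one power of $|(x,y)|$.

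For part $(i)$ with $d = 1$ and $n = 0$, the bound is immediate from $|E(x,y)| = |\log|(x,y)||/(2\pi) \leq C \max\{1,|\log|(x,y)||\}$. For $n \geq 1$, differentiating once gives $\partial_x E(x,y) = x/(2\pi(x^2+y^2))$, and I would apply Cauchy's estimate on the disc $|z - x| < |(x,y)|/2$ to the holomorphic extension $z \mapsto z/(z^2+y^2)$; its poles $\pm i y$ lie at distance $|(x,y)|$ from $x$, so $|z^2+y^2| \geq |(x,y)|^2/4$ and $|z| \leq 3|(x,y)|/2$ on this disc, producing a bound of the form $|\partial_x^{n-1}[x/(x^2+y^2)]| \leq C H^n (n-1)!\, |(x,y)|^{-n}$. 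Since $\max\{1,|\log|(x,y)||\} \geq 1$, the single stated bound then covers $n = 0$ and $n \geq 1$ uniformly. I do not foresee any serious obstacle: the only substantive step is the lower bound for $|z \cdot z + y^2|$ on the polydisc, which is a direct perturbation estimate.
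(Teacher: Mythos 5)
Your proof is correct, but it follows a different route from the paper. You holomorphically extend $E(\cdot,y)$ and $P(\cdot,y)$ in the $x$-variable to a polydisc of polyradius comparable to $|(x,y)|$ and invoke the several-variables Cauchy inequalities; the key step, the lower bound $|z\cdot z+y^2|\geq \tfrac{7}{16}|(x,y)|^2$ on the polydisc of polyradius $|(x,y)|/(4\sqrt d)$, is correct (and the image of the polydisc lies in the right half-plane, so the branch of $w^{-k/2}$ is unambiguous and matches the real values). The paper instead applies the standard interior derivative estimates for harmonic functions, $|\partial^\beta U(w)|\leq CH^{|\beta|}|\beta|!\,r^{-|\beta|}\|U\|_{L^\infty(B(w,r))}$ (Evans), directly to $P$ on the ball $B((x,y),|(x,y)|/2)$, together with the elementary sup bound $\|P\|_{L^\infty(B((x,y),|(x,y)|/2))}\leq 2^d/(c_{d+1}|(x,y)|^d)$. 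The paper's argument is shorter because it exploits harmonicity of $P$ in all $d+1$ variables and needs no explicit complexification or branch discussion; yours is more self-contained in that it only uses Cauchy's inequalities and an explicit perturbation estimate, and it makes the analyticity in $x$ (with a quantitative radius) visible, which is exactly what the factorial bound encodes. Both yield the same constants up to the choice of $H$, and your treatment of the logarithmic case $d=1$, $n=0$ versus $n\geq 1$ is also sound.
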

\begin{proof}
We only show $(ii)$ as $(i)$ can be treated similarly. We will use the following property of harmonic functions (cf.\ \cite[p.\ 29, Thm.\ 7]{Evans}): There are $C, H > 0$  such that for all $w \in \R^{d+1}$ and $r > 0$
$$
|\partial^\beta U({w})| \leq \frac{CH^{|\beta|}|\beta|!}{r^{|\beta|}} \| U \|_{L^\infty(B(w,r))}, \qquad   \beta \in \N^{d+1},
$$
for all functions $U$ that are harmonic in a neighborhood of $\overline{B}(w,r)$. Fix $(x,y) \in \R^{d+1} \backslash \{0\}$ and $\alpha \in \N^d$. By applying the above inequality to $w = (x,y)$, $r = |(x,y)|/2$, $\beta = (\alpha,0)$ and $U = P$, we obtain
$$
 | \partial^\alpha_x P(x,y)| \leq \frac{C(2H)^{|\alpha|} |\alpha|!}{|(x,y)|^{|\alpha|}} \| P \|_{L^\infty(B((x,y),|(x,y)|/2))} .
$$
The result now follows from the inequality
$$
\| P \|_{L^\infty(B((x,y),|(x,y)|/2))} \leq \frac{2^d}{c_{d+1}|(x,y)|^{d}}.
$$
\end{proof}
\begin{proof}[Proof of Proposition \ref{sufficient-ah}] We only show the statement for $\partial_y \Phi$ as the one for $\Phi$ can be shown similarly. Set $V \cap \R^d = \Omega$ and $\varphi = \partial_y \Phi_{|\Omega}$. Fix an arbitrary $x_0 \in \Omega$ and choose $r > 0$ such that $B^{d+1}(x_0,r) \Subset V$. Note that
$$
\Phi(x,y) = \int_{B^{d+1}(x_0,r)} E(x-\xi,y-\eta) \Delta \Phi(\xi,\eta) \dxi\deta + U(x,y), \qquad (x,y) \in B^{d+1}(x_0,r),
$$ 
for some $U \in \mathcal{H}(B^{d+1}(x_0,r))$. Hence,
$$
\varphi(x) = - \int_{B^{d+1}(x_0,r)} P(x-\xi,\eta) \Delta \Phi(\xi,\eta) \dxi\deta + \partial_yU(x,0), \qquad x \in  B^{d}(x_0,r).
$$
Since $U \in \mathcal{H}(B^{d+1}(x_0,r))$, we have that  $\partial_y U_{|  B^{d}(x_0,r)} \in \mathcal{A}(B^{d}(x_0,r)) \subset \mathcal{E}^{(M)}(B^d(x_0,r))$. Set
$$
\psi(x) =  - \int_{B^{d+1}(x_0,r)} P(x-\xi,\eta) \Delta \Phi(\xi,\eta) \dxi\deta, \qquad x \in B^d(x_0,r).
$$
Lemma \ref{bounds}$(ii)$ and \eqref{bound-almost-harmonic} yield that $\psi \in C^\infty(B^d(x_0,r))$ with
$$
\partial^\alpha\psi(x) =   - \int_{B^{d+1}(x_0,r)} \partial^\alpha_xP(x-\xi,\eta) \Delta \Phi(\xi,\eta) \dxi\deta, \qquad  \alpha \in \N^d,
$$
and that there are $C,H > 0$ such that 
\begin{align*}
|\partial^\alpha\psi(x)| &\leq CH^{|\alpha|} |\alpha|! \int_{B^{d+1}(x_0,r)} \frac{e^{-\omega_{M^*}\left(\frac{1}{h|\eta|}\right)}}{(|x-\xi|^2 + \eta^2)^{(|\alpha|+d)/2}}  \dxi\deta  \\
&\leq C(Hh)^{|\alpha|} M_{|\alpha|} \int_{B^{d+1}(x_0,r)} \frac{1}{(|x-\xi|^2 + \eta^2)^{d/2}}  \dxi\deta \\
&\leq C(Hh)^{|\alpha|} M_{|\alpha|} \int_{B^{d+1}(0,2r)} \frac{1}{|(\xi,\eta)|^d}  \dxi\deta 
 \end{align*}
for all $\alpha \in \N^d$ and  $x \in B^d(x_0,r)$. Since $x_0$ was arbitrary, this proves the result.
\end{proof}
\begin{proof}[Proofs of Theorem \ref{main-part-1} and Theorem \ref{main-part-2}] By Lemma \ref{equivalent}, we may assume that each $M \in \mathfrak{M}$ satisfies $(M.1)^{*}$. Hence, the direct implication in Theorem \ref{main-part-1} (Theorem \ref{main-part-2}, repectively) follows from Proposition \ref{almost-harmonic} (Proposition \ref{almost-harmonic-compact-1}, respectively), while the reverse ones follow from Proposition \ref{sufficient-ah}.
\end{proof}

Theorem \ref{main-part-1} particularly applies to $\mathcal{E}^{[M]}(\Omega)$, where $M$ is a weight sequence satisfying $(M.2)'$, $(M.1)^{*}_{\mathrm{w}}$, and $(NA)$. Similarly,  Theorem \ref{main-part-2} applies to $\mathcal{D}^{[M]}(\Omega)$, where $M$ is a non-quasianalytic weight sequence satisfying $(M.2)'$ and $(M.1)^{*}_{\mathrm{w}}$. Furthermore, our results yield characterizations of  $\mathcal{E}^{[\omega]}(\Omega)$ and $\mathcal{D}^{[\omega]}(\Omega)$ by almost harmonic functions as well:

\begin{corollary}
Let $\omega$ be a weight function satisfying $(\alpha_0)$ and $\omega(t) = o(t)$. Let $\Omega \subseteq \R^d$ be open and let $\varphi_0, \varphi _1 : \Omega \rightarrow \C$. Then, $\varphi_0, \varphi_1 \in \mathcal{E}^{[\omega]}(\Omega)$ if and only if for all $\Theta \Subset \Omega$ and for all $h > 0$ (for some $h > 0$) the following holds: For some/all $V \subset \R^{d+1}$ open with $V \cap \R^d = \Theta$ there exists $\Phi \in C^2(V)$ such that $\Phi_{|\Theta} = \varphi_{0|\Theta}$, $\partial_y \Phi_{|\Theta} = \varphi_{1|\Theta}$, and
$$
 \sup_{(x,y) \in V} |\Delta\Phi(x,y)| e^{\frac{1}{h} \omega^\star(h|y|)}< \infty.
 $$
\end{corollary}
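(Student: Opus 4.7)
The plan is to bootstrap the almost-harmonic characterization from the weight-matrix case (Theorem \ref{main-part-1}) to the weight-function case, using the reduction machinery of Section \ref{sect-uc}. First, invoke Lemma \ref{reduction}(ii) to obtain a weight matrix $\mathfrak{M}$ with $\mathfrak{M}_\omega [\approx] \mathfrak{M}$ that satisfies $[\mathfrak{M}.1]^{*}_{\mathrm{w}}$, $[\mathfrak{M}.2]'$, and $(NA)$; combining this with Lemma \ref{reduction}(i) yields the equality of locally convex spaces
$$
\mathcal{E}^{[\omega]}(\Omega) = \mathcal{E}^{[\mathfrak{M}_\omega]}(\Omega) = \mathcal{E}^{[\mathfrak{M}]}(\Omega).
$$
Therefore Theorem \ref{main-part-1} applies to $\mathfrak{M}$: membership of $(\varphi_0,\varphi_1)$ in $\mathcal{E}^{[\omega]}(\Omega)$ is equivalent to the matrix-type almost-harmonic condition with weights $\omega_{M^*}(1/(h|y|))$ ranging over $M\in\mathfrak{M}$ and $h>0$.

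The remaining task is to show that this matrix condition is equivalent to the condition of the corollary stated in terms of $\omega^{\star}$. Because the weights $\omega_{M^*}$ for $M\in\mathfrak{M}$ and $M\in\mathfrak{M}_\omega$ only differ by shifts in $h$ (these are immediate consequences of the definitions of $\subset$ and of $\mathfrak{M}_\omega[\approx]\mathfrak{M}$: an inequality $N_p\le CH^pM_p$ translates into $\omega_{M^*}(t)\le\omega_{N^*}(Ht)+\log C$), we may freely pass between the matrix condition for $\mathfrak{M}$ and the matrix condition for $\mathfrak{M}_\omega$ at the cost of rescaling $h$. So it suffices to compare the matrix condition w.r.t.\ $\mathfrak{M}_\omega$ with the $\omega^{\star}$-condition, and this is precisely what Lemma \ref{reduction-1} was built for.

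I would then treat the four quantifier combinations separately: for the Beurling direction $\Rightarrow$ (every matrix bound implies every $\omega^{\star}$-bound), given the target $h$ of the $\omega^{\star}$-condition apply Lemma \ref{reduction-1}(iv) with $k=h$ to produce $M\in\mathfrak{M}_\omega$ and $h'>0$ such that the inequality \eqref{red-2} converts a $\Phi$ furnished by the matrix condition at $(M,h')$ into one satisfying the desired bound $|\Delta\Phi|\le C e^{-\omega^{\star}(h|y|)/h}$; for the Beurling direction $\Leftarrow$, the dual inequality \eqref{red-1} from Lemma \ref{reduction-1}(i) is used to convert the $\omega^{\star}$-bound back to any prescribed matrix bound at $(M,h)\in\mathfrak{M}_\omega\times(0,\infty)$. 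The Roumieu case is symmetric: part (iii) handles the direction matrix $\Rightarrow\omega^{\star}$ (where the given $M,h$ determines the allowable $k$), while part (ii) handles $\omega^{\star}\Rightarrow$ matrix (where a given $h$ in the $\omega^{\star}$-condition determines suitable $M$ and $h$).

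No new ideas are required; the main care goes into keeping track of the quantifier order (``for all/some $M$, $h$'' versus ``for all/some $h$'') and of the different, numerically unrelated parameters $h$ in the target condition, in the matrix condition, and in the comparison inequalities of Lemma \ref{reduction-1}. The fact that Lemma \ref{reduction-1} is stated in all four forms (i)--(iv) is exactly what makes both Beurling and Roumieu versions come out uniformly, so the only potential obstacle is bookkeeping rather than any genuine new estimate. The ``some/all $V$'' clause transfers verbatim from Theorem \ref{main-part-1}, since the construction and the intrinsic bounds on $\Phi$ do not depend on the shape of $V$ beyond $V\cap\R^d=\Theta$.
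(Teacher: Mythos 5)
Your proposal is correct and follows exactly the paper's own route: the paper's proof is the one-line citation ``This follows from Lemma \ref{reduction}, Lemma \ref{reduction-1}, and Theorem \ref{main-part-1}'', and your write-up simply makes explicit the quantifier bookkeeping (parts (i)--(iv) of Lemma \ref{reduction-1} matched to the four Beurling/Roumieu directions) that the authors leave implicit. No discrepancies to report.
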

\begin{proof}
This follows from Lemma \ref{reduction}, Lemma \ref{reduction-1}, and Theorem \ref{main-part-1}.
\end{proof}
\begin{corollary}
Let $\omega$ be a non-quasianalytic weight function satisfying $(\alpha_0)$. Let $\Omega \subseteq \R^d$ be open and let $V \subseteq \R^{d+1}$ be open such that $V \cap \R^d = \Omega$. Let $\varphi_0,\varphi_1: \Omega \rightarrow \C$. Then, $\varphi_0,\varphi_1 \in \mathcal{D}^{[\omega]}(\Omega)$ if and only if for all $h > 0$ (for some $h > 0$) there exists $\Phi \in C^2_c(V)$ such that $\Phi_{|\Omega} = \varphi_{0}$, $\partial_y \Phi_{|\Omega} = \varphi_{1}$, and
$$
 \sup_{(x,y) \in V} |\Delta\Phi(x,y)|  e^{\frac{1}{h} \omega^\star(h|y|)}< \infty.
 $$
\end{corollary}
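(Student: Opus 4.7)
The plan is to reduce the weight function statement to Theorem \ref{main-part-2} by passing to the associated weight matrix $\mathfrak{M}_\omega$, exactly as in the proof of the preceding corollary but with $\mathcal{D}^{[\omega]}$ in place of $\mathcal{E}^{[\omega]}$ and Theorem \ref{main-part-2} in place of Theorem \ref{main-part-1}. Since $\omega$ is non-quasianalytic, $\omega(t) = o(t)$ holds automatically, so all the reduction machinery of Lemma \ref{reduction} and Lemma \ref{reduction-1} is available.

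First, I would note the identification $\mathcal{D}^{[\omega]}(\Omega) = \mathcal{D}^{[\mathfrak{M}_\omega]}(\Omega)$, which follows from Lemma \ref{reduction}$(i)$ applied at the level of the Banach spaces $\mathcal{D}^{\omega,h}_K$ and $\mathcal{D}^{M,h}_K$ (their norms are equivalent by the same estimates as for $\mathcal{B}^{\omega,h}$ and $\mathcal{B}^{M,h}$), and then by taking the injective limit over $K \subsetc \Omega$. Second, I would invoke Lemma \ref{reduction}$(ii)$ to replace $\mathfrak{M}_\omega$ by an equivalent matrix $\mathfrak{M}$ satisfying $[\mathfrak{M}.1]^{*}_{\mathrm{w}}$, $[\mathfrak{M}.2]'$, and $(NA)$; since $\omega$ is non-quasianalytic, $\mathfrak{M}_\omega$ is non-quasianalytic, and applying Lemma \ref{equivalent} (which preserves non-quasianalyticity under $[\approx]$) I may in addition assume $\mathfrak{M}$ is non-quasianalytic. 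Hence Theorem \ref{main-part-2} applies to $\mathfrak{M}$, giving the characterization of $\mathcal{D}^{[\mathfrak{M}]}(\Omega) = \mathcal{D}^{[\omega]}(\Omega)$ in terms of the bound $\sup_{(x,y) \in V} |\Delta\Phi(x,y)| e^{\omega_{M^*}(1/(h|y|))} < \infty$.

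Finally, I would use Lemma \ref{reduction-1} to translate this weight matrix bound into the $\omega^\star$-bound appearing in the statement, noting that if $X_1 \leq X_2 + \log C$ then finiteness of $\sup |\Delta \Phi| e^{X_2}$ implies finiteness of $\sup |\Delta \Phi| e^{X_1}$. In the Beurling case, parts $(i)$ and $(iv)$ of Lemma \ref{reduction-1} show that the family of bounds indexed by $(M,h) \in \mathfrak{M} \times (0,\infty)$ is cofinal and coinitial with the family indexed by $k > 0$, so the "for all $M, h$" quantifier on $\omega_{M^*}(1/(h|y|))$ becomes "for all $k$" on $\frac{1}{k}\omega^\star(k|y|)$. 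In the Roumieu case, parts $(ii)$ and $(iii)$ give the same dictionary with existential quantifiers.

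The main technical point is not really an obstacle but a bookkeeping check: verifying that non-quasianalyticity of $\mathfrak{M}_\omega$ (needed to apply Theorem \ref{main-part-2}, which assumes a non-quasianalytic matrix) survives the replacement by an equivalent matrix $\mathfrak{M}$ with the additional regularity properties of Lemma \ref{reduction}$(ii)$. In the Roumieu case one only needs some $M \in \mathfrak{M}$ to be non-quasianalytic and can pass to the subfamily above it as in the proof of Lemma \ref{equivalent}; in the Beurling case each $M^h_\omega$ is non-quasianalytic (since $\omega$ is), and this property transfers through $[\approx]$ to each member of the new matrix, again as in Lemma \ref{equivalent}. With this check in place the proof reduces to one sentence, matching the brevity of the preceding corollary.
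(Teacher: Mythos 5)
Your proposal is correct and follows essentially the same route as the paper, whose proof is the one-line citation of Lemma \ref{reduction}, Lemma \ref{reduction-1}, and Theorem \ref{main-part-2}; you have merely filled in the details that the paper leaves implicit (the identification $\mathcal{D}^{[\omega]}=\mathcal{D}^{[\mathfrak{M}_\omega]}$ at the level of the compactly supported spaces, the preservation of non-quasianalyticity under $[\approx]$, and the quantifier dictionary supplied by Lemma \ref{reduction-1}). These checks are accurate, including the observation that non-quasianalyticity of $\omega$ already yields $\omega(t)=o(t)$ so that Lemma \ref{reduction}$(ii)$ applies.
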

\begin{proof}
This follows from Lemma \ref{reduction}, Lemma \ref{reduction-1},  and Theorem \ref{main-part-2}.
\end{proof}
\section{Boundary values of harmonic functions}\label{sect-bv}

\subsection{Analytic functionals via harmonic functions} Given  $\Omega \subseteq \R^d$ open, we denote by  $\mathcal{A}^{'}(\Omega)$ the dual of $\mathcal{A}(\Omega)$. Since the space of entire functions is dense in $\mathcal{A}(\Omega)$, we may view  $\mathcal{A}^{'}(\Omega)$
as a subspace of $\mathcal{A}^{'}(\R^d)$. A compact set $K$ in $\R^d$ is said to be a \emph{carrier} of $f \in \mathcal{A}^{'}(\R^d)$ if $f \in \mathcal{A}^{'}(\Omega)$ for all $\Omega \subseteq \R^d$ open with $K \subsetc \Omega$.  We denote by $\mathcal{A}^{'}(K)$ the space consisting of all $f \in  \mathcal{A}^{'}(\R^d)$ such that $K$ is a carrier of $f$. The space  $\mathcal{A}^{'}(K)$ may be characterized in terms of harmonic functions, as we now proceed to explain. We follow H\"ormander's exposition \cite[Section 9.1]{Hormander}  (see also \cite{Komatsu1991, Komatsu2, Schapira}).

Let $K \subsetc \R^d$. The Poisson transform of $f \in \mathcal{A}^{'}(K)$ is defined as 
$$
P[f](x,y) := \langle f(\xi), P(x-\xi,y) \rangle, \qquad (x,y) \in \R^{d+1}\backslash K. 
$$
Recall that $\mathcal{H}_-(\R^{d+1} \backslash K)$ stands for the space of harmonic functions in $\R^{d+1} \backslash K$ that are odd with respect to $y$. We denote by $\mathcal{H}_{0,-}(\R^{d+1} \backslash K)$ the space consisting of all $F  \in \mathcal{H}_{-}(\R^{d+1} \backslash K)$ such that
$F(x,y) \rightarrow 0$ as $(x,y) \to \infty$.
\begin{theorem} \label{Hormander-forever}  Let $K \subsetc \R^d$.
\begin{itemize}
\item[$(i)$] \cite[Proposition 9.1.3]{Hormander} Let $f \in \mathcal{A}^{'}(K)$. Then, $P[f] \in \mathcal{H}_{0,-}(\R^{d+1} \backslash K)$ and
$$
\langle f, \partial_{y} \Phi_{| \R^d} \rangle =  - \int_{\R^{d+1}} P[f](x,y) \Delta(\rho\Phi)(x,y) \dx\dy,
$$
for all $\Phi \in \mathcal{H}(\R^{d+1})$ and $\rho \in \mathcal{D}(\R^{d+1})$ such that $\rho \equiv 1$ on an $\R^{d+1}$-neighborhood of $K$.
\item[$(ii)$] \cite[Proposition 9.1.5]{Hormander} Let $F  \in \mathcal{H}_-(\R^{d+1} \backslash K)$. Then, there exists a unique $f \in \mathcal{A}^{'}(K)$ such that
$$
\langle f, \partial_{y} \Phi_{| \R^d} \rangle =  - \int_{\R^{d+1}} F(x,y) \Delta(\rho\Phi)(x,y) \dx\dy,
$$
for all $\Phi \in \mathcal{H}(\R^{d+1})$ and $\rho \in \mathcal{D}(\R^{d+1})$ such that $\rho \equiv 1$ on an $\R^{d+1}$-neighborhood of $K$. Moreover, there is $U \in \mathcal{H}(\R^{d+1})$ such that $F = P[f] + U$. 
\end{itemize}
\end{theorem}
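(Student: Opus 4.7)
The plan is to split the argument into (i) and (ii), with (i) largely computational and (ii) the delicate part. For (i), the structural properties of $P[f]$ are immediate: $\xi\mapsto P(x-\xi,y)$ is real analytic on a neighborhood of $K$ for every $(x,y)\in\R^{d+1}\setminus K$, so the pairing defines a function; harmonicity in $(x,y)$ follows by differentiating under the pairing and invoking $\Delta_{(x,y)}P=0$; oddness in $y$ is inherited from $P(x,-y)=-P(x,y)$; and $P[f]$ decays at infinity because $P(x-\xi,y)\to 0$ uniformly for $\xi$ in a fixed complex neighborhood of $K$.

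The representation formula in (i) would come from the fundamental solution identity
$$\int_{\R^{d+1}} E(x-\xi,\,y-\eta)\,\Delta(\rho\Phi)(x,y)\dx\dy \;=\; (\rho\Phi)(\xi,\eta),$$
valid for every compactly supported $C^2$ function $\rho\Phi$. Differentiating in $\eta$ and setting $\eta=0$ yields, for $\xi$ in the interior of $\{\rho\equiv 1\}$ (where $\partial_y\rho$ vanishes),
$$\int_{\R^{d+1}} P(x-\xi,y)\,\Delta(\rho\Phi)(x,y)\dx\dy \;=\; -\partial_y\Phi(\xi,0).$$
Since $(x,y)\mapsto P(\,\cdot\,-x,y)$ is continuous from $\supp\Delta(\rho\Phi)$ into $\mathcal{A}(\Omega)$ for some open $\Omega\supset K$ disjoint from $\supp\Delta(\rho\Phi)$, a Fubini-type swap moves $f$ under the integral and yields the stated identity.

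For (ii), I would define
$$f(\partial_y\Phi|_{\R^d}) \;:=\; -\int_{\R^{d+1}}F(x,y)\,\Delta(\rho\Phi)(x,y)\dx\dy, \qquad \Phi\in\mathcal{H}(\R^{d+1}),$$
with $\rho$ chosen to be even in $y$ and identically $1$ near $K$, and check independence of the auxiliary data. Independence of $\rho$ is a Green's identity computation: the integrand of a difference of two choices is compactly supported in $\R^{d+1}\setminus K$, where $F$ is harmonic, so integration by parts twice vanishes. Independence of the harmonic representative uses that the ambiguity $G=\Phi_1-\Phi_2$ is harmonic on $\R^{d+1}$ with $\partial_y G|_{\R^d}=0$, hence even in $y$ by reflection, so $F\cdot\Delta(\rho G)$ is odd in $y$ and integrates to $0$. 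To extend $f$ to $\mathcal{A}(\Omega)$ for any open $\Omega\supset K$, I would invoke the Cauchy-Kovalevski theorem: any $\psi\in\mathcal{A}(\Omega)$ admits a harmonic $\Phi$ on an $\R^{d+1}$-neighborhood $V$ of $K$ with $\Phi|_{\R^d}=0$ and $\partial_y\Phi|_{\R^d}=\psi$, and choosing $\rho$ supported in $V$ keeps the formula well-defined; continuity in the natural seminorms on $\mathcal{A}(\Omega)$ is tracked through the construction. Uniqueness of $f$ then follows from (i), since harmonic polynomials on $\R^{d+1}$ restrict via $\partial_y|_{\R^d}$ to every polynomial on $\R^d$, and polynomials are dense in $\mathcal{A}(\Omega)$.

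For the decomposition, set $U:=F-P[f]\in\mathcal{H}_-(\R^{d+1}\setminus K)$; combining the representation formula for $F$ with (i) applied to $P[f]$ gives $\int U\,\Delta(\rho\Phi)\dx\dy=0$ for all admissible $\rho,\Phi$. Integrating by parts over $\supp\rho\setminus K_1^\circ$, where $K_1$ is a compact smooth-boundary neighborhood of $K$ on which $\rho\equiv 1$, this becomes $\int_{\partial K_1}(U\,\partial_n\Phi-\Phi\,\partial_n U)\,dS=0$ for every $\Phi\in\mathcal{H}(\R^{d+1})$, and a Runge-type approximation combined with a standard reciprocity argument then forces $U$ to admit a harmonic extension across $K$. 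The main obstacle is this extension step together with the earlier passage from the restricted class $\{\partial_y\Phi|_{\R^d}:\Phi\in\mathcal{H}(\R^{d+1})\}$ to the full analytic class $\mathcal{A}(\Omega)$: a general real-analytic function on $\R^d$ need not extend to a globally harmonic function on $\R^{d+1}$, so one is forced to work with local Cauchy-Kovalevski extensions and verify that the resulting functional remains continuous in the compact-open topology.
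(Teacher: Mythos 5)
The paper offers no proof of this theorem: both parts are quoted verbatim from H\"ormander's book (Propositions 9.1.3 and 9.1.5 of \cite{Hormander}), so there is no in-paper argument to compare against. Your outline is a faithful and correct reconstruction of H\"ormander's original proof: part $(i)$ via the identity $E \ast \Delta(\rho\Phi) = \rho\Phi$, differentiation in $\eta$, and the continuity of $(x,y) \mapsto P(\cdot - x, y)$ from $\operatorname{supp}\Delta(\rho\Phi)$ into $\mathcal{A}(\Omega)$ to move $f$ under the integral; part $(ii)$ via the Green's-formula definition of $f$, the reflection/parity argument for independence of the harmonic representative, Cauchy--Kovalevski (with the explicit series $\sum_p (-1)^p y^{2p+1}\Delta^p\psi / (2p+1)!$ giving the needed continuity estimates) to extend $f$ to all of $\mathcal{A}(\Omega)$, density of harmonic polynomials' normal derivatives for uniqueness, and the layer-potential-plus-Runge argument for the harmonic extension of $U = F - P[f]$. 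The compressed steps all go through; the one hidden ingredient worth making explicit in the last step is that $\R^{d+1}\setminus K_1$ is connected for $K_1 = \{d_K \leq \varepsilon\}$ --- this holds precisely because $K$ sits in the hyperplane $\R^d\times\{0\}$, since $d_K(x,t) = (d_K(x,0)^2 + t^2)^{1/2}$ lets one escape any ``hole'' of $K$ in $\R^d$ by moving in the $y$-direction --- and it is what licenses the approximation of $E(z-\cdot)$, $z\notin K_1$, by entire harmonic functions in $C^1$ near $\partial K_1$, hence the vanishing of the $\partial K_1$ layer potential outside $K_1$ and the gluing of the $\partial B_R$ potentials into a global harmonic $U$.
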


\begin{corollary}\label{Hormander-cor}
 Let $K \subsetc \R^d$ and let $V$ be an open $\R^{d+1}$-neighborhood of $K$ that is symmetric with respect to $y$.  For each $F  \in \mathcal{H}_-(V \backslash K)$  there is a unique $f \in \mathcal{A}^{'}(K)$ such that
\begin{equation}
\langle f, \partial_{y} \Phi_{| \R^d} \rangle =  - \int_{\R^{d+1}} F(x,y) \Delta(\rho\Phi)(x,y) \dx\dy,
\label{green}
\end{equation}
for all $\Phi \in \mathcal{H}(\R^{d+1})$ and $\rho \in \mathcal{D}(V)$ such that $\rho \equiv 1$ on an $\R^{d+1}$-neighborhood of $K$. Moreover, there is $U \in \mathcal{H}(V)$ such that $F = P[f] + U$. 
\end{corollary}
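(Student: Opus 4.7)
\medskip

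The plan is to reduce the statement to Theorem \ref{Hormander-forever}(ii) by extending $F$ to a globally defined odd harmonic function on $\R^{d+1}\setminus K$ that vanishes at infinity. Choose a cutoff $\chi\in\mathcal{D}(V)$ with $\chi\equiv 1$ on an open neighborhood $W$ of $K$ and $\chi$ even in $y$ (this is possible because $V$ is symmetric in $y$), and let $F_1:=\chi F$, extended by zero outside $V$, which is smooth on $\R^{d+1}\setminus K$ and odd in $y$. Since $\chi\equiv 1$ near $K$, the function $g:=\Delta F_1 = 2\nabla\chi\cdot\nabla F+F\Delta\chi$ vanishes there, so $g\in C^\infty_c(\R^{d+1})$ with $\supp g\subseteq \overline{V\setminus W}$; being a Laplacian of an odd-in-$y$ function, $g$ is itself odd in $y$.

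Next, form $\widetilde F:=E\ast g$, which is smooth on $\R^{d+1}$ with $\Delta\widetilde F=g$. Because $E(x,y)=E(x,-y)$ and $g$ is odd in $y$, a change of variables shows $\widetilde F$ is odd in $y$; it is harmonic off $\supp g$, hence in particular on $W$. At infinity $\widetilde F\to 0$: for $d+1\geq 3$ this follows from decay of $E$, while for $d+1=2$ the potential-theoretic expansion $\widetilde F(x,y)=\tfrac{1}{2\pi}(\log R)\int g+O(R^{-1})$ combined with $\int g=0$ (from oddness of $g$) gives the same conclusion. Define $U:=F_1-\widetilde F$ on $\R^{d+1}\setminus K$. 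Then $\Delta U=0$, $U$ is odd in $y$, and $U$ vanishes at infinity, so $U\in \mathcal{H}_{0,-}(\R^{d+1}\setminus K)$. Theorem \ref{Hormander-forever}(ii) then produces a unique $f\in\mathcal{A}'(K)$ with
$$
\langle f,\partial_y\Phi_{|\R^d}\rangle=-\int_{\R^{d+1}} U(x,y)\,\Delta(\rho\Phi)(x,y)\,\dx\dy
$$
for every $\Phi\in\mathcal{H}(\R^{d+1})$ and every $\rho\in\mathcal{D}(\R^{d+1})$ equal to $1$ near $K$, and a function $U_1\in\mathcal{H}(\R^{d+1})$ with $U=P[f]+U_1$.

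The step I expect to be the main technical point is showing that $F-P[f]$ extends harmonically from $V\setminus K$ to all of $V$. On $W\setminus K$ one has $F_1=F$, so $F=U+\widetilde F=P[f]+U_1+\widetilde F$ there. Both $U_1$ (harmonic on $\R^{d+1}$) and $\widetilde F$ (harmonic on $\R^{d+1}\setminus \supp g$, which contains $W$) are harmonic on $W$. Hence on $W\setminus K$ the function $F-P[f]$ coincides with the function $U_1+\widetilde F$, which is harmonic on all of $W$; by real-analyticity of harmonic functions these agree on every connected component of $W\setminus K$, and pasting gives a well-defined $U_V\in\mathcal{H}(V)$ with $F=P[f]+U_V$ on $V\setminus K$.

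Finally I verify the Green identity and uniqueness. For $\rho\in\mathcal{D}(V)$ equal to $1$ near $K$ and $\Phi\in\mathcal{H}(\R^{d+1})$, splitting $F=P[f]+U_V$ gives
$$
-\int_{\R^{d+1}} F\,\Delta(\rho\Phi)\,\dx\dy = -\int_{\R^{d+1}} P[f]\,\Delta(\rho\Phi)\,\dx\dy-\int_{V} U_V\,\Delta(\rho\Phi)\,\dx\dy.
$$
The first integral equals $\langle f,\partial_y\Phi_{|\R^d}\rangle$ by Theorem \ref{Hormander-forever}(i) applied to $f$ (with $\rho$ regarded as an element of $\mathcal{D}(\R^{d+1})$). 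The second integral vanishes since $\rho\Phi\in C^\infty_c(V)$, $U_V$ is harmonic on $V$, and Green's identity yields $\int U_V\,\Delta(\rho\Phi) = \int(\Delta U_V)(\rho\Phi)=0$. For uniqueness, if $f_1,f_2\in\mathcal{A}'(K)$ both satisfy \eqref{green}, then $\langle f_1-f_2,\partial_y\Phi_{|\R^d}\rangle=0$ for every $\Phi\in\mathcal{H}(\R^{d+1})$; taking harmonic polynomials $\Phi$ one obtains a dense subspace of $\mathcal{A}(\R^d)$, so $f_1=f_2$.
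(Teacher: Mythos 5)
Your proof is correct, and it takes a genuinely different (more self-contained) route than the paper. The paper disposes of the whole issue in two lines: it invokes the Mittag--Leffler theorem for harmonic functions (Komatsu) on the cover $V\setminus K=(\R^{d+1}\setminus K)\cap V$ to write $F=F_1-F_2$ with $F_1\in\mathcal{H}_-(\R^{d+1}\setminus K)$ and $F_2\in\mathcal{H}_-(V)$, and then applies Theorem \ref{Hormander-forever}$(ii)$ to $F_1$. What you do instead is construct such a decomposition by hand: cut off with an even $\chi$, observe that $g=\Delta(\chi F)$ is a compactly supported smooth function, odd in $y$ and vanishing near $K$, and correct by the Newtonian potential $E\ast g$ to produce $U=\chi F-E\ast g\in\mathcal{H}_{0,-}(\R^{d+1}\setminus K)$. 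This is exactly the standard proof of the Cousin/Mittag--Leffler decomposition in the special case where one member of the cover is a neighbourhood of a compact set, so it buys you independence from the Runge-type approximation hidden in Komatsu's theorem, at the cost of the (correctly handled) case distinction on the decay of $E$ when $d+1=2$; your extra observation that $\int g=0$ by oddness is the right fix there. Two cosmetic remarks: the decay of $U$ at infinity is not actually needed to apply Theorem \ref{Hormander-forever}$(ii)$, which only assumes $F_1\in\mathcal{H}_-(\R^{d+1}\setminus K)$; and in the gluing step the identity $F-P[f]=U_1+E\ast g$ holds pointwise on all of $W\setminus K$ directly from the construction, so the appeal to real-analyticity on connected components is superfluous (though harmless). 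The verification of \eqref{green} and of uniqueness via harmonic polynomials is sound and matches what the paper's citation of Theorem \ref{Hormander-forever} implicitly relies on.
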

\begin{proof}
Since $V \backslash K = (\R^{d+1} \backslash K) \cap V$, the Mittag-Leffler theorem for harmonic functions \cite[Theorem 2.4]{Komatsu1991}
 implies that there are $F_1  \in \mathcal{H}_-(\R^{d+1} \backslash K)$ and $F_2 \in \mathcal{H}_-(V)$ such that $F = F_1 - F_2$ on $V \backslash K$. The result therefore follows from  Theorem \ref{Hormander-forever}$(ii)$ (applied to $F_1$).
\end{proof}

\begin{remark} \label{remark-bv}
Let $K \subsetc \R^d$ and let $V$ be an open $\R^{d+1}$-neighborhood of $K$ that is symmetric with respect to $y$. Let  $F  \in \mathcal{H}_{-}(V \backslash K)$ and consider the associated $f \in \mathcal{A}^{'}(K)$ from Corollary \ref{Hormander-cor}. By Green's theorem (cf.\ the proof of Proposition \ref{bv-theorem} below), we have 
\begin{align*}
\langle f, \partial_{y} \Phi_{| \R^d} \rangle &=  - \int_{\R^{d+1}} F(x,y) \Delta(\rho\Phi)(x,y) \dx\dy \\
&= \lim_{y \to 0^+} \int_{\R^d} (F(x,y) - F(x,-y)) \rho(x,0) \partial_{y} \Phi(x,0) \dx 
\end{align*}
for all $\Phi \in \mathcal{H}(\R^{d+1})$ and $\rho \in \mathcal{D}(V)$ even with respect to $y$ such that $\rho \equiv 1$ on an $\R^{d+1}$-neighborhood of $K$. Hence, $f$ may be interpreted as the boundary value of $F$ in  $\mathcal{A}^{'}(K)$ and we write $f = \operatorname{bv}(F)$.
\end{remark}

\subsection{Spaces of ultradistributions} \label{sect-ud}
Let $\mathfrak{M}$ be a weight matrix satisfying $[\mathfrak{M}.2]'$ and $(NA)$. Given  $\Omega \subseteq \R^d$ open, we denote by  $\mathcal{E}^{' [\mathfrak{M}]}(\Omega)$ the strong dual of $\mathcal{E}^{[\mathfrak{M}]}(\Omega)$. We have once again that the space of entire functions is dense in $\mathcal{E}^{ [\mathfrak{M}]}(\Omega)$ (cf.\ \cite[Proposition 3.2]{Hormander85}), we therefore obtain that  $\mathcal{E}^{' [\mathfrak{M}]}(\Omega)$ may be viewed as a subspace of $\mathcal{A}^{'}(\R^d)$.  A compact set $K$ in $\R^d$ is said to be an \emph{$[\mathfrak{M}]$-carrier} of $f \in \mathcal{E}^{' [\mathfrak{M}]}(\R^d)$ if $f \in \mathcal{E}^{' [\mathfrak{M}]}(\Omega)$ for all $\Omega \subseteq \R^d$ open with $K \subsetc \Omega$. We denote by $\mathcal{E}^{' [\mathfrak{M}]}(K)$ the space consisting of all $f \in \mathcal{E}^{' [\mathfrak{M}]}(\R^d)$ such that $K$ is an $[\mathfrak{M}]$-carrier of $f$.
We have the following canonical isomorphism of vector spaces
$$
\mathcal{E}^{' [\mathfrak{M}]}(K) \cong \varprojlim_{K \subsetc \Omega} \mathcal{E}^{' [\mathfrak{M}]}(\Omega).
$$
We endow $\mathcal{E}^{' [\mathfrak{M}]}(K)$ with the projective limit topology induced by this isomorphism. 

Suppose that $\mathfrak{M}$ is non-quasianalytic. Given  $\Omega \subseteq \R^d$ open, we denote by  $\mathcal{D}^{' [\mathfrak{M}]}(\Omega)$ the strong dual of $\mathcal{D}^{[\mathfrak{M}]}(\Omega)$.

\subsection{Boundary values of harmonic functions in $\mathcal{E}^{'[\mathfrak{M}]}(K)$} 
Let $M$ be a weight sequence satisfying $(NA)$. Let  $V \subseteq \R^{d+1}$ be open and symmetric with respect to $y$ and let $S \subseteq V \cap \R^d$ be closed in $V$. For $h > 0$ we write $\mathcal{H}^{M,h}_{\infty,-}(V\backslash S)$ for the Banach space consisting of all $F \in \mathcal{H}_-(V\backslash S)$ such that 
$$
\| F \|_{\mathcal{H}^{M,h}_{\infty,-}(V\backslash S)} := \sup_{(x,y) \in V \backslash S} |F(x,y)| e^{-\omega_{M^*}\left(\frac{1}{hd_S(x,y)}\right)} < \infty,
$$
where $d_S(x,y)$ denotes the distance from $(x,y)$ to $S$. We set
$$
\mathcal{H}^{(M)}_{\infty,-}(V\backslash S) :=  \varinjlim_{h \to 0^+} \mathcal{H}^{M,h}_{\infty,-}(V\backslash S), \qquad
\mathcal{H}^{\{M\}}_{\infty,-}(V\backslash S) :=  \varprojlim_{h \to \infty} \mathcal{H}^{M,h}_{\infty,-}(V\backslash S).
$$
Next, let $\mathfrak{M}$ be a weight matrix satisfying $(NA)$. Let $K \subsetc \R^d$ and let $V$ be an open $\R^{d+1}$-neighborhood of $K$ that is symmetric with respect to $y$.  Choose a sequence $(V_n)_{n \in \N}$ of relatively compact open sets in $\R^{d+1}$ that are symmetric with respect to $y$ such that $K \subsetc V_n$, $V_n \Subset V_{n+1}$ and $V = \bigcup_{n \in \N} V_n$, and a sequence $(K_n)_{n \in \N}$ of  compact sets in $\R^{d}$ such that $K \subsetc \operatorname{int} K_n$, $K_{n+1} \subsetc \operatorname{int} K_{n}$, $K_n \subsetc V_n$ and $K = \bigcap_{n \in \N} K_n$. We define
\begin{gather*}
\mathcal{H}^{(\mathfrak{M})}_-(V\backslash K) := \varprojlim_{n \in \N} \varinjlim_{M \in \mathfrak{M}}\mathcal{H}^{(M)}_{\infty,-}(V_n\backslash K_n) , \qquad
\mathcal{H}^{\{\mathfrak{M}\}}_-(V\backslash K) := \varprojlim_{n \in \N} \varprojlim_{M \in \mathfrak{M}} \mathcal{H}^{\{M\}}_{\infty,-}(V_n\backslash K_n) .
\end{gather*}
This definition is independent of the chosen sequences $(V_n)_{n \in \N}$  and $(K_n)_{n \in \N}$. 
For two weight matrices $\mathfrak{M}$  and $\mathfrak{N}$ with $\mathfrak{M} [\approx] \mathfrak{N}$ we have that $\mathcal{H}^{[\mathfrak{M}]}_-(V\backslash K) = \mathcal{H}^{[\mathfrak{N}]}_-(V\backslash K)$ as locally convex spaces.

Let $K \subsetc \R^d$ and let $V$ be an open $\R^{d+1}$-neighborhood of $K$ that is symmetric with respect to $y$. Recall from Remark \ref{remark-bv} that we employ the notation $f=\operatorname{bv}(F)$ for the analytic functional corresponding to a harmonic function $F\in\mathcal{H}_{-}(V\backslash K)$ via the relation \eqref{green}.  We now show that the elements of $\mathcal{H}^{[\mathfrak{M}]}_-(V\backslash K)$ have boundary values in $\mathcal{E}^{'[\mathfrak{M}]}(K)$.

\begin{proposition}\label{bv-theorem}
Let $\mathfrak{M}$ be a weight matrix  satisfying $[\mathfrak{M}.1]^{*}_{\mathrm{w}}$, $[\mathfrak{M}.2]'$, and $(NA)$. Let $K \subsetc \R^d$ and let $V$ be an open $\R^{d+1}$-neighborhood of $K$ that is symmetric with respect to $y$. For  each $F \in \mathcal{H}^{[\mathfrak{M}]}_{-}(V \backslash K)$ we have that $\operatorname{bv}(F)  \in  \mathcal{E}^{'[\mathfrak{M}]}(K)$ and this quasianalytic functional may be represented as follows: For all  $\Omega \subseteq V \cap \R^d$ open with $K \subsetc \Omega$ it holds that
$$
\langle \operatorname{bv}(F), \varphi \rangle = \lim_{y \to 0^+} \int_{\R^d} (F(x,y) - F(x,-y)) \chi(x) \varphi(x) \dx, \qquad  \varphi \in \mathcal{E}^{[\mathfrak{M}]}(\Omega),
$$
 where $\chi \in \mathcal{D}(\Omega)$ is such that $\chi \equiv 1$ on a neighborhood of $K$.  

Moreover, the boundary value mapping
$$
\operatorname{bv}: \mathcal{H}^{[\mathfrak{M}]}_{-}(V \backslash K) \rightarrow  \mathcal{E}^{'[\mathfrak{M}]}(K)
$$
is continuous.
\end{proposition}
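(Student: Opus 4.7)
The plan is to extend $\operatorname{bv}(F) \in \mathcal{A}'(K)$ from Corollary \ref{Hormander-cor}, which is defined via Green's theorem with genuinely harmonic test extensions, to a continuous functional on $\mathcal{E}^{[\mathfrak{M}]}(\Omega)$ by replacing the harmonic test extension with an \emph{almost} harmonic one supplied by Proposition \ref{almost-harmonic}. Given $\varphi \in \mathcal{E}^{[\mathfrak{M}]}(\Omega)$ and $\Theta$ with $K \subsetc \Theta \Subset \Omega$, apply Proposition \ref{almost-harmonic} to $(\varphi_0,\varphi_1) = (0,\varphi_{|\Theta})$ to produce $\Phi \in C^2(\Theta \times \R)$ with $\Phi_{|\Theta}=0$, $\partial_y\Phi_{|\Theta}=\varphi_{|\Theta}$, and $|\Delta\Phi(x,y)| \leq Ce^{-\omega_{N^*}(1/(Ah|y|))}$; pick a cutoff $\rho \in \mathcal{D}(V \cap (\Theta\times \R))$, even in $y$, with $\rho \equiv 1$ on an $\R^{d+1}$-neighborhood of $K$; and set
$$L(\varphi) := -\int_{\R^{d+1}} F(x,y)\Delta(\rho\Phi)(x,y)\dx\dy.$$

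The first task is to establish convergence of this integral. Decompose $\Delta(\rho\Phi) = \rho\Delta\Phi + 2\nabla\rho\cdot\nabla\Phi + \Phi\Delta\rho$; the last two terms are compactly supported in $V\setminus K$, where $F$ is smooth, so they contribute finite pieces. For the main term the elementary inequality $d_K(x,y) \geq |y|$ gives $\omega_{M^*}(1/(hd_K)) \leq \omega_{M^*}(1/(h|y|))$, so the growth weight of $F$ can be played against the decay weight of $\Delta\Phi$. Iterating $[\mathfrak{M}.2]'$ twice produces the chain \eqref{M2'double} required by Proposition \ref{almost-harmonic}, and the parameters are chosen accordingly: in the Beurling case the $M,h$ bounding $F$ are given, and one selects a larger $M'\in\mathfrak{M}$ for $\varphi$ together with $h'=h/A$; in the Roumieu case, $M',h'$ are dictated by $\varphi$, and one uses that $F$ lies in $\mathcal{H}^{N,Ah'}_{\infty,-}$ for the $N$ produced by the chain. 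In either case $|F\rho\Delta\Phi|$ becomes uniformly bounded on $\operatorname{supp}\rho$. This weight-matching is the main technical obstacle.

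With $L(\varphi)$ well-defined, derive the boundary-value representation by Green's theorem on $\R^{d+1}\setminus\{|y|\leq\varepsilon\}$, where $F$ is harmonic: the interior integral converges to $\int F\Delta(\rho\Phi)\dx\dy$ by dominated convergence, while the boundary contributions on $\{y=\pm\varepsilon\}$, combined using the oddness of $F$ in $y$ together with $\Phi_{|\Theta}=0$ and $\partial_y\Phi_{|\Theta}=\varphi$, collapse in the limit $\varepsilon\to 0^+$ to $\lim_{y\to 0^+}\int_{\R^d}(F(x,y)-F(x,-y))\rho(x,0)\varphi(x)\dx$. Since $F(\cdot,0)\equiv 0$ on $(\R^d\cap V)\setminus K$ by oddness, any other $\chi\in\mathcal{D}(\Omega)$ with $\chi\equiv 1$ near $K$ yields the same limit; this both proves the stated formula and shows independence of $L$ from the choices of $\Theta,\Phi,\rho$. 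Specializing to $\varphi\in\mathcal{A}(\Omega)$ with $\Phi$ a genuine harmonic extension (Cauchy--Kovalevski) identifies $L$ with $\operatorname{bv}(F)$ from Corollary \ref{Hormander-cor} on $\mathcal{A}(\Omega)$, so $L$ is the desired extension. Finally, continuity of $\operatorname{bv}:\mathcal{H}^{[\mathfrak{M}]}_-(V\setminus K)\to\mathcal{E}^{'[\mathfrak{M}]}(K)$ is immediate: the Sobolev-type estimate \eqref{sobolev-bounds} makes $\Phi$ depend linearly and continuously on $\varphi$, and the convergence bound is linear in $\|F\|$, giving $|L(\varphi)|\leq C\|F\|_{\mathcal{H}^{M,h}_{\infty,-}(V_n\setminus K_n)}\|\varphi\|_{\mathcal{B}^{M',h'}(\Theta)}$ with parameters matched as above.
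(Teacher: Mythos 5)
Your overall strategy --- pairing $F$ with $\Delta(\rho\Phi)$ for an almost harmonic extension $\Phi$ of $(0,\varphi)$ supplied by Proposition \ref{almost-harmonic} and controlling the product via the matched weights $e^{\pm\omega_{N^*}(1/(k|y|))}$ --- is exactly the paper's, and your treatment of well-definedness of the integral, of the weight bookkeeping via $[\mathfrak{M}.2]'$, and of continuity via \eqref{sobolev-bounds} is sound. The gap is in the step where you pass from $L(\varphi)=-\int F\Delta(\rho\Phi)$ to the boundary-value formula by applying Green's identity on $\{|y|>\varepsilon\}$ and letting $\varepsilon\to0^+$. The boundary terms produced there are
$$
\mp\int_{\{y=\pm\varepsilon\}}\Bigl(F\,\partial_y(\rho\Phi)-\rho\Phi\,\partial_yF\Bigr)\dx ,
$$
and only the first summand tends to the desired limit. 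For the second, all you know is that $\rho\Phi(x,\pm\varepsilon)=O(\varepsilon)$ (since $\Phi$ vanishes on $\{y=0\}$ and is merely $C^2$), while $\partial_yF(x,\pm\varepsilon)$ can grow like $\varepsilon^{-1}e^{\omega_{N^*}(c/\varepsilon)}$ for $x$ near $K$ by the interior estimates for harmonic functions; since $\varepsilon\, e^{\omega_{N^*}(c/\varepsilon)}\to\infty$ for every weight sequence, this product is not controlled, and the same superpolynomial blow-up of $F$ defeats the $O(\varepsilon)$ error you incur when replacing $\partial_y(\rho\Phi)(x,\pm\varepsilon)$ by $\chi(x)\varphi(x)$ in the first summand. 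Oddness of $F$ does not rescue you: combining the two faces leaves $\int\rho\,(\Phi(x,\varepsilon)-\Phi(x,-\varepsilon))\,\partial_yF(x,\varepsilon)\dx\approx 2\varepsilon\int\chi\varphi\,\partial_yF(x,\varepsilon)\dx$, which is exactly of the uncontrolled form. The same problem undermines your subsequent claims that $L$ is independent of the choice of $\Phi$ and hence agrees on $\mathcal{A}(\Omega)$ with the functional of Corollary \ref{Hormander-cor}.

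The paper avoids this entirely by translating $F$ rather than excising a strip: it sets $F_y(x,\eta)=F(x,\eta+y)$, which is harmonic in a neighborhood of $\overline{\Theta}\times[0,r]$, and applies Green's theorem to the pair $(F_y,\rho\Phi)$ on $\Theta\times(0,r)$. The boundary of the Green region then sits exactly on $\{\eta=0\}$, where $\rho\Phi\equiv 0$ and $\partial_\eta(\rho\Phi)=\chi\varphi$ hold \emph{exactly}, so no error terms appear and one obtains the identity $\int_{\R^d} F(x,y)\chi(x)\varphi(x)\dx=-\int_{\Theta\times(0,r)}F(x,\eta+y)\Delta(\rho\Phi)(x,\eta)\dx\deta$ for each fixed $y>0$; the limit $y\to0^+$ is then taken only in the volume integral, where your matched-weight domination (split into the region where $\rho\equiv1$ and its complement, on which $F$ is continuous up to the boundary) applies. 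If you replace your excision argument by this translation trick, the rest of your proof goes through.
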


As stated in the introduction, we shall show Proposition \ref{bv-theorem} by combining Green's theorem with our description of ultradifferentiable functions by almost harmonic functions (Proposition \ref{almost-harmonic}). This method is  suggested by \eqref{green} (see  Remark  \ref{remark-bv}).

\begin{proof}[Proof of Proposition \ref{bv-theorem}]
We only consider the Beurling case as the Roumieu case can be treated similarly. By Lemma \ref{equivalent}, we may assume that each $M \in \mathfrak{M}$ satisfies $(M.1)^{*}$. 

Fix an arbitrary open subset $\Omega \subseteq V \cap \R^d$ with $K \subsetc \Omega$ and let $\chi \in \mathcal{D}(\Omega)$ be such that $\chi \equiv 1$ on a neighborhood of $K$. 
Choose $\Theta \Subset \Omega$ with piecewise smooth boundary such that  $\supp \chi \subsetc \Theta$.  Let $r > 0$ be such that $\Theta \times (-r,r)  \Subset V$. Pick $L \subsetc \Theta$  such that $K \subsetc \operatorname{int} L$ and $\chi \equiv 1$ on a neighborhood of $L$.  It suffices to show that for all $N \in \mathfrak{M}$ and $k >0$ there is $M \in \mathfrak{M}$ such that 
\begin{equation}
\operatorname{bv}_\Omega: \mathcal{H}^{N,k}_{\infty,-}( \Theta  \times (-r,r) \backslash L) \rightarrow \mathcal{E}^{'(M)}(\Omega),
\label{inter-bv}
\end{equation}
where
 \begin{align*}
\langle \operatorname{bv}_\Omega(F) , \varphi \rangle &= \lim_{y \to 0^+} \int_{\R^d} (F(x,y) - F(x,-y)) \chi(x) \varphi(x) \dx \\ 
&= 2 \lim_{y \to 0^+} \int_{\R^d} F(x,y) \chi(x) \varphi(x) \dx, \qquad \varphi \in \mathcal{E}^{(M)}(\Omega),
\end{align*}
is well-defined and continuous. Choose $Q,M \in \mathfrak{M}$ such that \eqref{M2'double} holds. For $\varphi \in \mathcal{E}^{(M)}(\Omega)$ consider the function $\Phi = \Phi(0,\varphi_{|\Theta}) \in C^2(\Theta \times \R)$ from Proposition \ref{almost-harmonic} with $h = k/A$. Let $\varepsilon > 0$ be such that $\Theta \times (-r-\varepsilon,r+\varepsilon)  \Subset V$.
For  $F \in  \mathcal{H}^{N,k}_{\infty,-}( \Theta  \times (-r,r) \backslash L)$ and $0 < y < \varepsilon$ we set  $F_y(x,\eta) = F(x, \eta + y)$. Then, $F_y$ is harmonic in a neighborhood of $\overline{\Theta} \times [0,r]$. Choose $\rho \in \mathcal{D}(\Theta \times (-r,r))$ such that $\rho \equiv 1$ on an $\R^{d+1}$-neighborhood of $L$ and $\rho_{|\R^d} = \chi$. By applying Green's theorem to the pair $(F_y, \rho \Phi)$ on the region $\Theta \times (0,r)$, we obtain 
$$
\int_{\R^d} F(x,y) \chi(x) \varphi(x) \dx = - \int_{\Theta \times (0,r)} F(x, \eta + y) \Delta( \rho \Phi)(x,\eta) \dx \deta
$$
for all $0 < y < \varepsilon$. Let $J \subsetc \Theta \times (-r,r)$ be such that $L \subsetc \operatorname{int} J$ and $\rho \equiv 1$ on $J$.  Property $(ii)$ of Proposition \ref{almost-harmonic} implies that
$$
\lim_{y \to 0^+} \int_{\Theta \times (0,r) \cap J} F(x, \eta + y) \Delta (\rho \Phi) (x,\eta) \dx \deta = \int_{\Theta \times (0,r) \cap J} F(x, \eta) \Delta \Phi (x,\eta) \dx \deta 
$$
and
\begin{align*}
\left | \int_{\Theta \times (0,r) \cap J} F(x, \eta) \Delta \Phi (x,\eta) \dx \deta \right | 
\leq |\Theta \times (0,r) \cap J| \|F\| ||| \Phi |||, 
\end{align*}
 where $ \| F \| =  \|F\|_{\mathcal{H}^{N,k}_{\infty,-}(\Theta  \times (-r,r)\backslash L)}$ and $||| \Phi |||= \sup_{(x,\eta) \in \Theta \times \R}  |\Delta\Phi(x,\eta)| e^{\omega_{N^*}\left(\frac{1}{k|\eta|}\right)}$ . Since  $F$ is continuous on a neighborhood of $\overline{\Theta} \times [0,r] \backslash J$, we have 
$$
\lim_{y \to 0^+} \int_{\Theta \times (0,r) \backslash J} F(x, \eta + y) \Delta( \rho \Phi)(x,\eta) \dx \deta = \int_{\Theta \times (0,r) \backslash J} F(x, \eta) \Delta( \rho \Phi)(x,\eta) \dx \deta
$$
and, if $\delta > 0$ is such that $d_L(x,y) \geq \delta$ for all $(x,y) \in \Theta \times (0,r) \backslash J$, then
$$
\left |\int_{\Theta \times (0,r) \backslash J} F(x, \eta) \Delta( \rho \Phi)(x,\eta) \dx \deta \right | \leq e^{\omega_{N^*}\left(\frac{1}{k\delta}\right)} | \Theta \times (0,r) \backslash J|  \|F\| \|\Delta( \rho \Phi)\|_{L^\infty(\Theta \times \R)}.
$$
Note that there is $C > 0$ (independent of $\varphi$) such that 
$$
 \|\Delta( \rho \Phi)\|_{L^\infty(\Theta \times \R)} \leq C \max\{ ||| \Phi |||,  \max_{\substack{ \alpha \in \N^{d+1}; \\ |\alpha| \leq 1}} \| \partial^\alpha\Phi\|_{L^\infty(\Theta\times \R)}\}.
$$
Hence,
$$
\langle \operatorname{bv}_\Omega(F) , \varphi \rangle =  - 2\int_{\Theta \times (0,r)} F(x, \eta) \Delta( \rho \Phi)(x,\eta) \dx \deta, \qquad \varphi \in  \mathcal{E}^{(M)}(\Omega),
$$
and there is $C > 0$ such that
$$
|\langle \operatorname{bv}_\Omega(F) , \varphi \rangle| \leq C\|F\| \max\{ ||| \Phi(0,\varphi_{|\Theta}) |||, \max_{\substack{ \alpha \in \N^{d+1}; \\ |\alpha| \leq 1}} \| \partial^{\alpha}\Phi(0,\varphi_{|\Theta})\|_{L^\infty(\Theta\times \R)} \}
$$
for all  $F \in  \mathcal{H}^{N,k}_{\infty,-}(\Theta  \times (-r,r) \backslash L)$ and $\varphi \in \mathcal{E}^{(M)}(\Omega)$. Therefore,  \eqref{sobolev-bounds} implies that the mapping in \eqref{inter-bv} is well-defined and continuous.
\end{proof}

Our next goal is to study the Poisson transform of elements of $\mathcal{E}^{'[\mathfrak{M}]}(K)$. To this end, we need to introduce some additional spaces of harmonic functions.  Let $M$ be a weight sequence satisfying $(NA)$.  Let $K \subsetc \R^d$. Recall that $\mathcal{H}_{0,-}(\R^{d+1}\backslash K)$ stands for the space of harmonic functions on $\R^{d+1}\backslash K$ that are odd with respect to $y$ and vanish at infinity. For $h > 0$ we write $\mathcal{H}^{M,h}_{\infty,0,-}(\R^{d+1}\backslash K)$ for the Banach space consisting of all $F \in \mathcal{H}_{0,-}(\R^{d+1}\backslash K)$ such that 
$\| F \|_{\mathcal{H}^{M,h}_{\infty,-}(\R^{d+1}\backslash K)}< \infty $. We set
\begin{gather*}
\mathcal{H}^{(M)}_{\infty,0,-}(\R^{d+1}\backslash K) :=  \varinjlim_{h \to 0^+} \mathcal{H}^{M,h}_{\infty,0,-}(\R^{d+1}\backslash K), \\
\mathcal{H}^{\{M\}}_{\infty,0,-}(\R^{d+1}\backslash K) :=  \varprojlim_{h \to \infty} \mathcal{H}^{M,h}_{\infty,0,-}(\R^{d+1}\backslash K).
\end{gather*}
Next, let $\mathfrak{M}$ be a weight matrix satisfying $(NA)$. Let $K \subsetc \R^d$. Choose a sequence $(K_n)_{n \in \N}$ of  compact sets in $\R^{d}$ such that $K \subsetc \operatorname{int} K_n$, $K_{n+1} \subsetc \operatorname{int} K_{n}$ and $K = \bigcap_{n \in \N} K_n$. We define
\begin{gather*}
\mathcal{H}^{(\mathfrak{M})}_{0,-}(\R^{d+1} \backslash K) := \varprojlim_{n \in \N} \varinjlim_{M \in \mathfrak{M}}\mathcal{H}^{(M)}_{\infty,0,-}(\R^{d+1} \backslash K_n), \\
\mathcal{H}^{\{\mathfrak{M}\}}_{0,-}(\R^{d+1}\backslash K) := \varprojlim_{n \in \N} \varprojlim_{M \in \mathfrak{M}} \mathcal{H}^{\{M\}}_{\infty,0,-}(\R^{d+1}\backslash K_n) .
\end{gather*}
This definition is independent of the chosen sequence $(K_n)_{n \in \N}$. 

\begin{proposition}\label{poisson-prop}
Let $\mathfrak{M}$ be a weight matrix satisfying $[\mathfrak{M}.2]'$ and $(NA)$. Let $K \subsetc \R^d$. Then, the Poisson transform
$$
P[\, \cdot \, ]:  \mathcal{E}^{'[\mathfrak{M}]}(K) \rightarrow \mathcal{H}^{[\mathfrak{M}]}_{0,-}(\R^{d+1}\backslash K)
$$
is well-defined and continuous. 
\end{proposition}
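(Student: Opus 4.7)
The strategy is to derive a single pointwise bound for $P[f]$ on each annulus $\R^{d+1}\setminus K_n$, from which the target-space membership, harmonicity, oddness in $y$, decay at infinity, and continuity of $P[\,\cdot\,]$ all follow simultaneously.

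First, the pairing $P[f](x,y)=\langle f(\xi),P(x-\xi,y)\rangle$ is well defined for every $(x,y)\in\R^{d+1}\setminus K$: the kernel $\xi\mapsto P(x-\xi,y)$ is real analytic in an $\R^d$-neighborhood of $K$ (it is entire in $\xi$ when $y\neq 0$ and vanishes near $K$ when $y=0$ and $x\notin K$), and therefore belongs to $\mathcal{A}(\Omega)\subset\mathcal{E}^{[\mathfrak{M}]}(\Omega)$ for some suitable $\Omega\supset K$ by $(NA)$. Harmonicity of $P[f]$ in $(x,y)$ and its oddness in $y$ then transfer from the Poisson kernel by differentiating or reflecting inside the duality bracket, once one checks that $(x,y)\mapsto P(x-\cdot,y)$ is smooth (indeed analytic) into $\mathcal{E}^{[\mathfrak{M}]}(\Omega)$ locally in $(x,y)\in\R^{d+1}\setminus K$.

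The main work is the growth estimate. Fix $n$ and pick an open $\Theta_n$ with $K_{n+1}\subset\Theta_n\Subset\operatorname{int} K_n$. Since $K$ is an $[\mathfrak{M}]$-carrier of $f$, continuity of $f$ on $\mathcal{E}^{[\mathfrak{M}]}(\operatorname{int} K_n)$ yields, in the Beurling case, some $M_n\in\mathfrak{M}$, $h_n>0$, $C_n>0$ with $|\langle f,\varphi\rangle|\leq C_n\|\varphi\|_{\mathcal{B}^{M_n,h_n}(\Theta_n)}$, while in the Roumieu case such a bound holds for every $M\in\mathfrak{M}$ and $h>0$ with a suitable constant $C_{n,M,h}$. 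Substituting $\varphi=P(x-\cdot,y)$, invoking Lemma~\ref{bounds}(ii), rewriting $|\alpha|!/M_{|\alpha|}=1/M^{*}_{|\alpha|}$, and recognizing the resulting supremum over $\alpha\in\N^d$ as the exponential of $\omega_{M^{*}}$, I obtain
\[
|P[f](x,y)|\leq\frac{C}{d_{\Theta_n}(x,y)^{d}}\,\exp\!\Bigl(\omega_{M^{*}}\!\Bigl(\tfrac{H}{h\,d_{\Theta_n}(x,y)}\Bigr)\Bigr).
\]
Since $\overline{\Theta_n}\subset K_n$, one has $d_{\Theta_n}(x,y)\geq d_{K_n}(x,y)$ for every $(x,y)\in\R^{d+1}\setminus K_n$. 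The polynomial prefactor $d_{\Theta_n}^{-d}$ is absorbed into the exponential via the elementary bound $t^{d}\leq M^{*}_d\,\exp\omega_{M^{*}}(t)$, and the resulting doubled exponent is dominated by $\omega_{N^{*}}(\cdot/c)$ for a suitable $N\in\mathfrak{M}$ produced by $[\mathfrak{M}.2]'$. This yields a bound of the desired form $|P[f](x,y)|\leq C'\exp\omega_{N^{*}}(1/(h'd_{K_n}(x,y)))$, which places $P[f]$ in $\mathcal{H}^{(N),h'}_{\infty,0,-}(\R^{d+1}\setminus K_n)$ in the Beurling case and in $\mathcal{H}^{\{M\},h}_{\infty,0,-}(\R^{d+1}\setminus K_n)$ for every $M\in\mathfrak{M}$, $h>0$ in the Roumieu case. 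Vanishing at infinity follows from the same inequality, since $d_{\Theta_n}(x,y)\to\infty$ forces both factors to tend to their limits.

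Continuity of $P[\,\cdot\,]$ is then immediate from the linear dependence of each constant on the corresponding seminorm of $f$: assembling the estimates over all $n$ (and over all $M$, $h$ in the Roumieu case) gives continuity into the projective-inductive limit topology of $\mathcal{H}^{[\mathfrak{M}]}_{0,-}(\R^{d+1}\setminus K)$. The main obstacle is precisely the bookkeeping in the absorption step—collapsing the polynomial prefactor and the combined exponent back into a single $\omega_{N^{*}}$ with $N$ still inside the weight matrix—which is routine but is exactly where the stability condition $[\mathfrak{M}.2]'$ enters.
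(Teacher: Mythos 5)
Your overall strategy matches the paper's: reduce to a pointwise bound on $P[f]$ by feeding the kernel $P(x-\cdot,y)$ into a seminorm estimate for $f$ and invoking Lemma~\ref{bounds}$(ii)$ (the paper gets harmonicity, oddness and decay at infinity for free by citing Theorem~\ref{Hormander-forever}$(i)$ rather than re-deriving them, and replaces Banach--Steinhaus on bounded sets for your single seminorm bound, but these are cosmetic differences). The problem is your absorption step, and it is a genuine gap, not bookkeeping. You first pull the polynomial prefactor into an exponential via $t^{d}\leq M^{*}_{d}e^{\omega_{M^{*}}(t)}$, arriving at a product of two exponentials, and then claim that the ``doubled exponent'' $\omega_{M^{*}}(t)+\omega_{M^{*}}(Ht/h)\leq 2\,\omega_{M^{*}}(\max(t,Ht/h))$ is dominated by $\omega_{N^{*}}(\cdot)$ for some $N$ produced by $[\mathfrak{M}.2]'$. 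That last implication is false: unwinding the definitions, $2\omega_{M^{*}}(t)\leq\omega_{N^{*}}(Ht)+\log C$ amounts to $N^{*}_{p+q}\leq CH^{p+q}M^{*}_{p}M^{*}_{q}$ for all $p,q$, which is a moderate-growth condition of type $(M.2)$, strictly stronger than the derivation-closedness $[\mathfrak{M}.2]'$ assumed here. For instance $M_{p}=p!\,e^{p^{2}}$ satisfies $(M.2)'$ but the doubling inequality for $\omega_{M^{*}}$ fails, and nothing in the hypotheses lets you escape to a better $N$ in the matrix.

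The correct way to absorb the factor $d_{S}(x,y)^{-d}$ --- and what the paper does --- is to shift the index \emph{inside} the supremum rather than split off a separate exponential. Iterating $[\mathfrak{M}.2]'$ $d$ times yields $N\in\mathfrak{M}$ with $N_{p+d}\leq C_{0}H_{0}^{p}M_{p}$; then
$$
\frac{H^{|\alpha|}\,|\alpha|!}{h^{|\alpha|}M_{|\alpha|}\,d_{S}(x,y)^{|\alpha|+d}}
\leq C_{0}\left(\frac{h}{HH_{0}}\right)^{d}\frac{(HH_{0})^{|\alpha|+d}(|\alpha|+d)!}{\bigl(h\,d_{S}(x,y)\bigr)^{|\alpha|+d}N_{|\alpha|+d}}
\leq C'\exp\!\left(\omega_{N^{*}}\!\left(\frac{HH_{0}}{h\,d_{S}(x,y)}\right)\right),
$$
so that the polynomial prefactor and the exponential are produced together as a \emph{single} supremum over the shifted index, with no doubling of $\omega_{M^{*}}$ ever occurring. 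With this replacement your argument goes through; the remaining steps (well-definedness of the pairing via $(NA)$, the Roumieu bookkeeping, continuity from the linear dependence on the seminorm of $f$) are sound.
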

\begin{proof}
We only consider the Beurling case as the Roumieu case is similar. It suffices to show that for all $\Omega \Subset \R^d$ with $K \subsetc \Omega$ and all $M \in \mathfrak{M}$ there is $N \in \mathfrak{M}$ such that
$$ 
P[\, \cdot \, ]:  \mathcal{E}^{'(M)}(\Omega) \rightarrow  \mathcal{H}^{(N)}_{\infty,0,-}(\R^{d+1}\backslash \overline{\Omega})
$$
is well-defined and continuous. Choose $N \in \mathfrak{M}$ such that
$$
N_{p+d} \leq C_0H_0^p M_p, \qquad p \in \N,
$$
for some $C_0,H_0 > 0$. By Theorem \ref{Hormander-forever}$(i)$, we have that $P[f] \in \mathcal{H}_{0,-}(\R^{d+1}\backslash \overline{\Omega})$ for all $f \in  \mathcal{E}^{'(M)}(\Omega) \subset \mathcal{A}^{'}(\overline{\Omega})$. Since  $\mathcal{E}^{'(M)}(\Omega)$ is bornological, it is enough to show that for every bounded set $B \subset \mathcal{E}^{'(M)}(\Omega)$  the set $\{ P[f] \, | \, f \in B \}$ is contained and bounded in $\mathcal{H}^{(N)}_{\infty,0,-}(\R^{d+1}\backslash \overline{\Omega})$. By the Banach-Steinhaus theorem, there are $C_1,h > 0$ and $\Theta \Subset \Omega$
such that
$$
|\langle f, \varphi \rangle | \leq C_1 \| \varphi \|_{\mathcal{B}^{M,h}(\Theta)}, \qquad \varphi \in  \mathcal{E}^{(M)}(\Omega), 
$$
for all $f \in B$. Lemma \ref{bounds}$(ii)$ therefore implies that for all $(x,y) \in \R^{d+1} \backslash \overline{\Omega}$
\begin{align*}
|P[f](x,y)| &= |\langle f(\xi), P(x-\xi,y) \rangle| \\
&\leq C_1 \sup_{\xi \in \Theta} \sup_{\alpha \in \N^d} \frac{|\partial^\alpha_\xi P(x-\xi,y)|}{h^{|\alpha|}M_{|\alpha|}} \\
&\leq CC_1 \sup_{\xi \in \Theta} \sup_{\alpha \in \N^d}\frac{H^{|\alpha|} |\alpha|!}{h^{|\alpha|}M_{|\alpha|}(|x-\xi|^2 + y^2)^{(|\alpha|+d)/2}} \\
&\leq CC_0C_1(h/HH_0)^{d} \sup_{\alpha \in \N^d}\frac{(HH_0)^{|\alpha|+d}(|\alpha|+d)!}{(hd_{\overline{\Omega}}(x,y))^{|\alpha|+d}N_{|\alpha|+d}}  \\
&\leq CC_0C_1(h/HH_0)^{d}  e^{\omega_{N^*}\left(\frac{HH_0}{hd_{\overline{\Omega}}(x,y)}\right)}
\end{align*}
for all $f \in B$.
\end{proof}
We are ready to prove the main result of this article.
\begin{theorem}\label{main-theorem} Let $\mathfrak{M}$ be a weight matrix satisfying $[\mathfrak{M}.1]^{*}_{\mathrm{w}}$, $[\mathfrak{M}.2]'$, and $(NA)$. Let $K \subsetc \R^d$.
\begin{itemize}
\item[$(i)$] Let $V$ be an open $\R^{d+1}$-neighborhood of $K$ that is symmetric with respect to $y$. Then, the sequence
$$
0 \longrightarrow \mathcal{H}_-(V) \longrightarrow  \mathcal{H}^{[\mathfrak{M}]}_-(V \backslash K) \xrightarrow{\phantom,\operatorname{bv}\phantom,}  \mathcal{E}^{'[\mathfrak{M}]}(K) \longrightarrow 0
$$
is  exact. Moreover, the boundary value mapping is continuous and it has  the Poisson transform
$$
P[\, \cdot \, ]: \mathcal{E}^{'[\mathfrak{M}]}(K)  \rightarrow \mathcal{H}^{[\mathfrak{M}]}_-(V \backslash K)
$$
 as a continuous linear right inverse.
 \item[$(ii)$] The boundary value mapping
$$
\operatorname{bv}: \mathcal{H}^{[\mathfrak{M}]}_{0,-}(\R^{d+1} \backslash K) \rightarrow  \mathcal{E}^{'[\mathfrak{M}]}(K)
$$
is a topological isomorphism whose inverse is given by the Poisson transform
$$
P[\, \cdot \, ]:  \mathcal{E}^{'[\mathfrak{M}]}(K) \rightarrow \mathcal{H}^{[\mathfrak{M}]}_{0,-}(\R^{d+1} \backslash K). 
$$ 
\end{itemize}
\end{theorem}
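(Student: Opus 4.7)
The plan is that both parts follow by assembling the already-established pieces---Proposition \ref{bv-theorem}, Proposition \ref{poisson-prop}, and Corollary \ref{Hormander-cor}---together with a small uniqueness argument using Liouville's theorem. The only identity not already essentially done is $P[\,\cdot\,]\circ\operatorname{bv}=\operatorname{id}$ on $\mathcal{H}^{[\mathfrak{M}]}_{0,-}(\R^{d+1}\backslash K)$.

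For part $(i)$, continuity of $\operatorname{bv}$ is Proposition \ref{bv-theorem}. Proposition \ref{poisson-prop} places $P[f]$ in $\mathcal{H}^{[\mathfrak{M}]}_{0,-}(\R^{d+1}\backslash K)$, and restriction to $V\backslash K$ gives a continuous linear embedding into $\mathcal{H}^{[\mathfrak{M}]}_-(V\backslash K)$, since the weighted sup-bounds on $V_n\backslash K_n$ are only weaker than those on $\R^{d+1}\backslash K_n$. The identity $\operatorname{bv}\circ P[\,\cdot\,]=\operatorname{id}$ follows from Theorem \ref{Hormander-forever}$(i)$: that theorem identifies the analytic functional associated with $P[f]$ via \eqref{green} with $f$ itself, and Remark \ref{remark-bv} ensures that this coincides with the boundary value defined in Proposition \ref{bv-theorem}. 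For exactness at the middle term, the restriction $\mathcal{H}_-(V)\to\mathcal{H}^{[\mathfrak{M}]}_-(V\backslash K)$ is injective by the unique continuation of harmonic functions, its image lies in $\ker\operatorname{bv}$ because any $F\in\mathcal{H}_-(V)$ is harmonic across $V\cap\R^d$, so in the Green's-theorem computation of Proposition \ref{bv-theorem} the $y\to 0^+$ boundary contribution cancels its $y\to 0^-$ counterpart (using the odd symmetry), and conversely if $\operatorname{bv}(F)=0$ then Corollary \ref{Hormander-cor} delivers $F=P[0]+U=U\in\mathcal{H}_-(V)$.

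For part $(ii)$, continuity of $\operatorname{bv}$ comes from restricting $(i)$ to $V=\R^{d+1}$ along the continuous inclusion $\mathcal{H}^{[\mathfrak{M}]}_{0,-}(\R^{d+1}\backslash K)\hookrightarrow\mathcal{H}^{[\mathfrak{M}]}_-(\R^{d+1}\backslash K)$, and continuity of $P[\,\cdot\,]$ is Proposition \ref{poisson-prop}. We already have $\operatorname{bv}\circ P[\,\cdot\,]=\operatorname{id}$. For the reverse identity, fix $F\in\mathcal{H}^{[\mathfrak{M}]}_{0,-}(\R^{d+1}\backslash K)$ and set $f=\operatorname{bv}(F)$. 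Theorem \ref{Hormander-forever}$(ii)$ gives $U\in\mathcal{H}(\R^{d+1})$ with $F=P[f]+U$; both $F$ and $P[f]$ are odd w.r.t.\ $y$ and vanish at infinity, so the same holds for $U$, and Liouville's theorem forces $U\equiv 0$. Hence $F=P[f]$.

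The main technical point---and the only one requiring slight care---is the consistency of the two notions of boundary value: the ultradistributional one from Proposition \ref{bv-theorem}, defined via almost harmonic extensions, and the analytic-functional pairing from Corollary \ref{Hormander-cor}. Since Remark \ref{remark-bv} identifies them on $\mathcal{A}(\Omega)$ and $\mathcal{A}(\Omega)$ is densely embedded in $\mathcal{E}^{[\mathfrak{M}]}(\Omega)$ under $(NA)$ (cf.\ the beginning of Section \ref{sect-ud}), the two functionals agree as elements of $\mathcal{E}^{'[\mathfrak{M}]}(K)$; once this identification is invoked, the rest of the argument is essentially bookkeeping.
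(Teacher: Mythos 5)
Your argument is correct and follows essentially the same route as the paper: continuity from Propositions \ref{bv-theorem} and \ref{poisson-prop}, the right-inverse identity from Theorem \ref{Hormander-forever}$(i)$ together with Remark \ref{remark-bv}, the kernel computation from Corollary \ref{Hormander-cor}, and Liouville's theorem for part $(ii)$. Your explicit treatment of the consistency between the analytic-functional and ultradistributional boundary values (via density of entire functions under $(NA)$) is a worthwhile elaboration of what the paper leaves implicit in Remark \ref{remark-bv}, but it is not a different method.
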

\begin{proof}
$(i)$ The boundary value mapping and the Poisson transform are well-defined and continuous by Proposition \ref{bv-theorem} and Proposition \ref{poisson-prop}, respectively. Theorem \ref{Hormander-forever}$(i)$ and Remark \ref{remark-bv} yield that $P$ is a right inverse of $\operatorname{bv}$. Finally, the equality  $\ker \operatorname{bv} = \mathcal{H}_-(V)$  follows from Corollary \ref{Hormander-cor} and Remark \ref{remark-bv}. 

$(ii)$ This follows from part $(i)$, Proposition \ref{poisson-prop}, and Liouville's theorem for harmonic functions.

\end{proof}

Theorem \ref{main-theorem} particularly applies to $\mathcal{E}^{'[M]}(K)$, where $M$ is a weight sequence satisfying $(M.2)'$, $(M.1)^{*}_{\mathrm{w}}$, and $(NA)$. Finally, we give two representations of $\mathcal{E}^{'[\omega]}(K)$ by boundary values of harmonic functions. Let $\omega$ be a weight function satisfying $\omega(t) = o(t)$. Let $V \subseteq \R^{d+1}$ be open and symmetric with respect to $y$ and let $S \subseteq V \cap \R^d$ be closed in $V$. For $h > 0$ we write $\mathcal{H}^{\omega,h}_{\infty,-}(V\backslash S)$ for the Banach space consisting of all $F \in \mathcal{H}_-(V\backslash S)$ such that 
$$
\| F \|_{\mathcal{H}^{\omega,h}_{\infty,-}(V\backslash S)} := \sup_{(x,y) \in V \backslash S} |F(x,y)| e^{-\frac{1}{h}\omega^\star(h d_S(x,y))} < \infty.
$$
 We set
$$
\mathcal{H}^{(\omega)}_{\infty,-}(V\backslash S) :=  \varinjlim_{h \to 0^+} \mathcal{H}^{\omega,h}_{\infty,-}(V\backslash S), \qquad
\mathcal{H}^{\{\omega\}}_{\infty,-}(V\backslash S) :=  \varprojlim_{h \to \infty} \mathcal{H}^{\omega,h}_{\infty,-}(V\backslash S).
$$
Let $K \subsetc \R^d$ and let $V$ be an open $\R^{d+1}$-neighborhood of $K$ that is symmetric with respect to $y$. The spaces $\mathcal{H}^{[\omega]}_-(V\backslash K)$ and $\mathcal{H}^{[\omega]}_{0,-}(\R^{d+1}\backslash K)$ are defined in the natural way.
\begin{corollary}
Let $\omega$ be a weight function satisfying $(\alpha_0)$ and $\omega(t) = o(t)$. Let $K \subsetc \R^d$.
\begin{itemize}
\item[$(i)$] Let $V$ be an open $\R^{d+1}$-neighborhood of $K$ that is symmetric with respect to $y$. Then, the sequence
$$
0 \longrightarrow \mathcal{H}_-(V) \longrightarrow  \mathcal{H}^{[\omega]}_-(V \backslash K) \xrightarrow{\phantom,\operatorname{bv}\phantom,}  \mathcal{E}^{'[\omega]}(K) \longrightarrow 0
$$
is  exact. Moreover, the boundary value mapping is continuous and it has  the Poisson transform
$$
P[\, \cdot \, ]: \mathcal{E}^{'[\omega]}(K)  \rightarrow \mathcal{H}^{[\omega]}_-(V \backslash K)
$$
as  a continuous linear right inverse.
 \item[$(ii)$] The boundary value mapping
$$
\operatorname{bv}: \mathcal{H}^{[\omega]}_{0,-}(\R^{d+1} \backslash K) \rightarrow  \mathcal{E}^{'[\omega]}(K)
$$
is a topological isomorphism whose inverse is given by the Poisson transform
$$
P[\, \cdot \, ]:  \mathcal{E}^{'[\omega]}(K) \rightarrow \mathcal{H}^{[\omega]}_{0,-}(\R^{d+1} \backslash K). 
$$ 
\end{itemize}
\end{corollary}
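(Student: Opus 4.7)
The plan is to deduce this corollary from Theorem \ref{main-theorem} by reducing the weight-function setting to the weight-matrix setting, exactly in the spirit of the two corollaries at the end of Section \ref{sect-ah}. Since $\omega$ satisfies $(\alpha_0)$ and $\omega(t) = o(t)$, Lemma \ref{reduction}$(ii)$ supplies a weight matrix $\mathfrak{M}$ with $\mathfrak{M}_\omega [\approx] \mathfrak{M}$ satisfying $[\mathfrak{M}.1]^*_{\mathrm{w}}$, $[\mathfrak{M}.2]'$, and $(NA)$, so that Theorem \ref{main-theorem} is applicable to $\mathfrak{M}$. On the ultradistribution side, Lemma \ref{reduction}$(i)$ combined with the $[\approx]$-invariance of $\mathcal{E}^{[\cdot]}(\Omega)$ gives $\mathcal{E}^{[\omega]}(\Omega) = \mathcal{E}^{[\mathfrak{M}]}(\Omega)$ as locally convex spaces for every open $\Omega \subseteq \R^d$; taking strong duals and then the projective limit over open neighborhoods of $K$ (as in Subsection \ref{sect-ud}) yields $\mathcal{E}^{'[\omega]}(K) = \mathcal{E}^{'[\mathfrak{M}]}(K)$ as LCS.

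The main step is the parallel identification on the harmonic-function side: $\mathcal{H}^{[\omega]}_{-}(V\backslash K) = \mathcal{H}^{[\mathfrak{M}]}_{-}(V\backslash K)$ and $\mathcal{H}^{[\omega]}_{0,-}(\R^{d+1}\backslash K) = \mathcal{H}^{[\mathfrak{M}]}_{0,-}(\R^{d+1}\backslash K)$ as locally convex spaces. At the level of the defining Banach spaces, this amounts to comparing the weights $\frac{1}{h}\omega^\star(hs)$ and $\omega_{M^*}(1/(hs))$ for $M \in \mathfrak{M}_\omega$, which is precisely what Lemma \ref{reduction-1} delivers. In the Beurling case, part $(i)$ of that lemma shows that each Banach space $\mathcal{H}^{M,h}_{\infty,-}$ with $M \in \mathfrak{M}_\omega$ embeds continuously into some $\mathcal{H}^{\omega,k}_{\infty,-}$, while part $(iv)$ furnishes the reverse embedding, so the inner inductive limits $\varinjlim_{h \to 0^+} \mathcal{H}^{\omega,h}_{\infty,-}(V_n \backslash K_n)$ and $\varinjlim_{M \in \mathfrak{M}_\omega}\mathcal{H}^{(M)}_{\infty,-}(V_n \backslash K_n)$ coincide as LCS; parts $(ii)$ and $(iii)$ do the analogous work for the projective limits defining the Roumieu case. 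Applying the outer $\varprojlim_n$ and invoking the $[\approx]$-invariance of $\mathcal{H}^{[\cdot]}_{-}$ (and the analogous invariance for $\mathcal{H}^{[\cdot]}_{0,-}$) then produces the desired identifications.

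With all three identifications in hand, parts $(i)$ and $(ii)$ of the corollary become direct transcriptions of the corresponding parts of Theorem \ref{main-theorem}, since the boundary-value map $\operatorname{bv}$ and the Poisson transform $P[\,\cdot\,]$ are defined by intrinsic formulas independent of the weight data. The only delicate point is the bookkeeping in the second paragraph, namely matching the quantifier structure of Lemma \ref{reduction-1}$(i)$--$(iv)$ against the nested inductive/projective limits defining $\mathcal{H}^{[\omega]}$ and $\mathcal{H}^{[\mathfrak{M}]}$; once this is organized, nothing further is required.
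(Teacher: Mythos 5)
Your proposal is correct and is precisely the argument the paper intends: its proof of this corollary is the one-line citation of Lemma \ref{reduction}, Lemma \ref{reduction-1}, and Theorem \ref{main-theorem}, and your unpacking (matrix replacement via Lemma \ref{reduction}, identification of the test-function/dual spaces via Lemma \ref{reduction}$(i)$, and identification of the weighted harmonic spaces via the four quantifier-matched estimates of Lemma \ref{reduction-1}) is exactly what that citation encodes. No gaps; the quantifier bookkeeping you flag is handled correctly in both the Beurling and Roumieu cases.
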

\begin{proof}
This follows from  Lemma \ref{reduction},  Lemma \ref{reduction-1},  and Theorem \ref{main-theorem}.
\end{proof}

\subsection{Application: The support theorem for quasianalytic functionals}
A fundamental result in the theory of analytic functionals states that each $f \in \mathcal{A}^{'}(\R^d)$ has a unique minimal carrier, called the \emph{support} of $f$ and denoted by $\operatorname{supp}_{\mathcal{A}^{'}}f$. Martineau \cite{Martineau} (see also \cite{Morimoto}) showed this by using  cohomological properties of the sheaf of germs of analytic functions. 

Theorem \ref{Hormander-forever} may be used to give a simpler proof of the existence of  $\operatorname{supp}_{\mathcal{A}^{'}}f$ (cf.\ \cite[Theorem 9.1.6]{Hormander}). In fact, by Theorem \ref{Hormander-forever}, a compact set $K$ in $\R^d$ is a  carrier of $f$ if and only if its Poisson transform $P[f]$ can be continued as a harmonic function to $\R^{d+1} \backslash K$. Hence,   $\operatorname{supp}_{\mathcal{A}^{'}}f$ is given by the compact set $K\subset \R^{d}$ with the property that $\R^{d+1}\backslash K$ is the largest open set in $\R^{d+1}$ on which $P[f]$ has  a harmonic extension and, in particular, this notion is well-defined.

The existence of a unique minimal carrier can also be established for quasianalytic functionals, but the only known treatment in the literature, due to H\"ormander \cite{Hormander85}, turns out to be much harder. Howeover, in view of Theorem \ref{main-theorem}, we can now repeat the simple reasoning involving the harmonic continuation of the Poisson transform to directly infer the ensuing support theorem for $\mathcal{E}^{' [\mathfrak{M}]}(\R^d)$.

\begin{theorem}
\label {support-theorem} 
Let $\mathfrak{M}$ be a weight matrix satisfying $[\mathfrak{M}.1]^{*}_{\mathrm{w}}$, $[\mathfrak{M}.2]'$, and $(NA)$. For each $f \in \mathcal{E}^{' [\mathfrak{M}]}(\R^d)$  there exists a smallest compact set $K \subset \R^d$ such that $f \in  \mathcal{E}^{' [\mathfrak{M}]}(K)$; in fact, $K = \operatorname{supp}_{\mathcal{A}^{'}}f$.
\end{theorem}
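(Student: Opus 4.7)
I would show that $K := \operatorname{supp}_{\mathcal{A}'} f$ is itself an $[\mathfrak{M}]$-carrier of $f$. The reverse inclusion---every $[\mathfrak{M}]$-carrier $K'$ of $f$ contains $K$---is immediate: condition $(NA)$ gives the continuous inclusion $\mathcal{E}^{' [\mathfrak{M}]}(K') \subset \mathcal{A}'(K')$, so $K'$ is an analytic carrier of $f$ and hence $K' \supseteq K$. The substance of the theorem is thus to establish $f \in \mathcal{E}^{' [\mathfrak{M}]}(K)$.

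Since $f \in \mathcal{A}'(\R^d)$, $K$ is well-defined by the classical Martineau--H\"ormander result recalled just before the statement, and by Theorem \ref{Hormander-forever}(i) the Poisson transform $F := P[f]$ extends to an element of $\mathcal{H}_{0,-}(\R^{d+1}\setminus K)$. By Theorem \ref{main-theorem}(ii), proving $f \in \mathcal{E}^{' [\mathfrak{M}]}(K)$ reduces to showing $F \in \mathcal{H}^{[\mathfrak{M}]}_{0,-}(\R^{d+1}\setminus K)$, which amounts to producing, for each member $K_n$ of a decreasing compact exhaustion of $K$ in $\R^d$ (with $K \subsetc \operatorname{int}(K_n)$, $K = \bigcap K_n$), a growth bound
$$|F(x,y)| \leq C_n\,\exp\!\left(\omega_{M^*}\!\left(\tfrac{1}{h\,d_{K_n}(x,y)}\right)\right), \qquad (x,y) \in \R^{d+1}\setminus K_n.$$
Once this is in place, the identification $\operatorname{bv}(F) = f$ follows from the consistency of the integral-limit formula in Proposition \ref{bv-theorem} with the analytic formula in Remark \ref{remark-bv}, and $f \in \mathcal{E}^{' [\mathfrak{M}]}(K)$ as required.

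The principal anticipated obstacle is establishing this growth bound. My plan is to combine three ingredients: (a) the estimate $F \in \mathcal{H}^{[\mathfrak{M}]}_{0,-}(\R^{d+1}\setminus K_0)$ supplied by Proposition \ref{poisson-prop} for any initial $[\mathfrak{M}]$-carrier $K_0 \supseteq K$, which controls $F$ on the exterior region $\R^{d+1}\setminus K_0$; (b) the continuous-seminorm representation $F(x,y) = \langle f(\xi), P(x-\xi,y)\rangle$ together with the derivative bound of Lemma \ref{bounds}(ii), which converts the continuity of $f$ on $\mathcal{E}^{[\mathfrak{M}]}(\Omega_0)$ (for $\Omega_0 \supseteq K_0$) into an $|y|$-growth estimate for $F$ uniform in $x$; and (c) the harmonicity of $F$ on all of $\R^{d+1}\setminus K$, which yields uniform boundedness on compact subsets thereof. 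Using the inequalities $d_{K_n}(x,y) \geq |y|$ in general and $d_{K_n}(x,y) = |y|$ exactly when $x \in K_n$ (since $K_n \subset \R^d$), I would patch (a) on the exterior of $K_0$, (c) on compact regions bounded away from $K_n$, and (b) on the remaining region where $x$ sits close to $K_n$ and $|y|$ is small, into a single $\mathcal{H}^{[\mathfrak{M}]}_{0,-}$-bound. Reconciling these three regimes into one coherent $d_{K_n}$-estimate---rather than the easier $|y|$-estimate produced by (b) alone---is where the technical effort of the argument is concentrated.
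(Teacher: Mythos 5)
Your plan is correct and is essentially the argument the paper intends: the paper gives no written proof of Theorem \ref{support-theorem}, only the remark that one ``repeats the simple reasoning involving the harmonic continuation of the Poisson transform'' through Theorem \ref{main-theorem}$(ii)$, which is exactly your reduction. The patching you flag as the main technical effort is in fact easier than you anticipate, and your ingredient (b) is redundant with (a): since $\omega_{M^*}\geq 0$, on the region away from $K$ the local boundedness of the harmonic continuation (plus decay at infinity) already yields the required estimate with the trivial lower bound $e^{\omega_{M^*}}\geq 1$, while on the region near $K$ one has $x\in K_n\cap K_{0,m}$, hence $d_{K_n}(x,y)=|y|=d_{K_{0,m}}(x,y)$, so the bound from Proposition \ref{poisson-prop} applied to an initial carrier $K_0$ transfers verbatim.
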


It should be noted that Theorem \ref{support-theorem} contains the corresponding support theorem for  $\mathcal{E}^{'[\omega]}(\R^d)$, where  $\omega$ is a  weight function satisfying $(\alpha_0)$ and $\omega(t) = o(t)$, which was earlier obtained by Heinrich and Meise in \cite{Heinrich} via the method from \cite{Hormander85} (without the assumption $(\alpha_0)$). We end this subsection with two remarks.

\begin{remark}
H\"ormander \cite{Hormander85} showed the support theorem for $\mathcal{E}^{'\{M\}}(\R^d)$, where $M$ is a weight sequence satisfying $(M.2)'$ and $(NA)$. His technique can be adapted to show that Theorem \ref{support-theorem} is still valid if one removes the hypothesis $[\mathfrak{M}.1]^{*}_{\mathrm{w}}$ from its statement. We omit details since it is out of the scope of this article.
\end{remark}
\begin{remark}
Suppose that $\mathfrak{M}$ is a non-quasianalytic weight matrix. The assignment $\Omega \mapsto \mathcal{D}^{' [\mathfrak{M}]}(\Omega)$ is a soft sheaf on $\R^d$. For $K \subsetc \R^d$ the space $\{ f \in \mathcal{D}^{' [\mathfrak{M}]}(\R^d) \, | \, \operatorname{supp} f \subseteq K \}$  is canonically isomorphic to $\mathcal{E}^{' [\mathfrak{M}]}(K)$. Hence, there exists a unique minimal  $[\mathfrak{M}]$-carrier  for each $f \in \mathcal{E}^{' [\mathfrak{M}]}(\R^d)$, which is well-known to coincide with $\operatorname{supp}_{\mathcal{A}^{'}} f$ (cf.\ \cite[Lemma 7.4]{Komatsu}), a fact that also follows from Theorem \ref{main-theorem-nqa} below.
\end{remark}

\subsection{Boundary values of harmonic functions in $\mathcal{D}^{'[\mathfrak{M}]}(\Omega)$}  
Let $\mathfrak{M}$ be a non-quasianalytic weight matrix. Let $\Omega \subseteq \R^d$ be open and let $V \subseteq \R^{d+1}$ be open and symmetric with respect to $y$ such that $V \cap \R^d = \Omega$.  Choose  a sequence $(V_n)_{n \in \N}$ of relatively compact open sets in $\R^{d+1}$ that are symmetric with respect to $y$ such that  $V_n \Subset V_{n+1}$ and $V = \bigcup_{n \in \N} V_n$. Set $\Omega_n = V_n \cap \R^d$. We define
\begin{gather*}
\mathcal{H}^{(\mathfrak{M})}_-(V\backslash \Omega) := \varprojlim_{n \in \N} \varinjlim_{M \in \mathfrak{M}}\mathcal{H}^{(M)}_{\infty,-}(V_n\backslash \Omega_n) , \qquad
\mathcal{H}^{\{\mathfrak{M}\}}_-(V\backslash \Omega) := \varprojlim_{n \in \N} \varprojlim_{M \in \mathfrak{M}} \mathcal{H}^{\{M\}}_{\infty,-}(V_n\backslash \Omega_n) .
\end{gather*}
This definition is  independent of the chosen sequence $(V_n)_{n \in \N}$. For two weight matrices $\mathfrak{M}$  and $\mathfrak{N}$ with $\mathfrak{M} [\approx] \mathfrak{N}$ we have that $\mathcal{H}^{[\mathfrak{M}]}_-(V\backslash \Omega) = \mathcal{H}^{[\mathfrak{N}]}_-(V\backslash \Omega)$ as locally convex spaces.

We now show that the elements of $\mathcal{H}^{[\mathfrak{M}]}_-(V\backslash \Omega)$ have boundary values in $\mathcal{D}^{'[\mathfrak{M}]}(\Omega)$.
\begin{proposition}\label{bv-prop-nqa}
Let $\mathfrak{M}$ be a non-quasianalytic weight matrix satisfying $[\mathfrak{M}.1]^{*}_{\mathrm{w}}$ and $[\mathfrak{M}.2]'$. Let $\Omega \subseteq \R^d$ be open and let $V \subseteq \R^{d+1}$ be open and symmetric with respect to $y$ such that $V \cap \R^d = \Omega$. For $F \in \mathcal{H}^{[\mathfrak{M}]}_{-}(V \backslash \Omega)$ we set
$$
\langle \operatorname{bv}(F), \varphi \rangle := \lim_{y \to 0^+} \int_{\R^d} (F(x,y) - F(x,-y)) \varphi(x) \dx, \qquad  \varphi \in \mathcal{D}^{[\mathfrak{M}]}(\Omega).
$$
Then, $\operatorname{bv}(F)$ belongs to  $\mathcal{D}^{'[\mathfrak{M}]}(\Omega)$. 
Moreover, the boundary value mapping
$$
\operatorname{bv}: \mathcal{H}^{[\mathfrak{M}]}_{-}(V \backslash \Omega) \rightarrow  \mathcal{D}^{'[\mathfrak{M}]}(\Omega)
$$
is continuous.
\end{proposition}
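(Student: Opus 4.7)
The plan is to adapt the argument of Proposition \ref{bv-theorem}, substituting Proposition \ref{almost-harmonic-compact-1} for Proposition \ref{almost-harmonic} in order to produce almost harmonic extensions that are compactly supported inside $V$. First, via Lemma \ref{equivalent}, I would assume without loss of generality that each $M \in \mathfrak{M}$ satisfies $(M.1)^{*}$, and treat only the Beurling case, the Roumieu case following by reversing the direction of the quantifiers.

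Since $\mathcal{D}^{[\mathfrak{M}]}(\Omega) = \varinjlim_{K \subsetc \Omega} \mathcal{D}^{[\mathfrak{M}]}_K$, it suffices to prove continuity of $\operatorname{bv}(F)$ on each $\mathcal{D}^{[\mathfrak{M}]}_K$. I would fix $K \subsetc \Omega$, choose $\varepsilon > 0$ with $K \times [-\varepsilon, \varepsilon] \subsetc V$, and pick $n$ with $K \times [-\varepsilon, \varepsilon] \subsetc V_n$; write $\Omega_n := V_n \cap \R^d$. The goal then reduces to showing that for every $N \in \mathfrak{M}$ and $k > 0$ there exist $M, Q \in \mathfrak{M}$ satisfying \eqref{M2'double} (obtained from $N$ by iterating $[\mathfrak{M}.2]'$) and $h, C > 0$ such that
\begin{equation*}
|\langle \operatorname{bv}(F), \varphi \rangle| \leq C \| F \|_{\mathcal{H}^{N,k}_{\infty,-}(V_n \setminus \Omega_n)} \| \varphi \|_{\mathcal{B}^{M,h}(\R^d)}
\end{equation*}
for all such $F$ and all $\varphi \in \mathcal{D}^{M,h}_K$. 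For the construction, given such a $\varphi$, I would apply Proposition \ref{almost-harmonic-compact-1} (with parameter $k/A$ in place of $h$) to the pair $(0, \varphi)$ to obtain $\Phi \in C^2_c(\R^{d+1})$ with $\operatorname{supp} \Phi \subseteq K \times [-\varepsilon, \varepsilon] \subsetc V_n$, $\Phi_{|\R^d} = 0$, $\partial_y \Phi_{|\R^d} = \varphi$, a bound of the form $|\Delta \Phi(x, \eta)| \leq C' \|\varphi\|_{\mathcal{B}^{M,h}(\R^d)} e^{-\omega_{N^*}(1/(k|\eta|))}$, and the $L^\infty$ bound \eqref{sobolev-bounds}.

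For $0 < y < \varepsilon / 2$, the translate $F_y(x, \eta) := F(x, \eta + y)$ is harmonic on a neighborhood of $\overline{\Omega_n \times [0, \varepsilon/2]}$ intersected with $\operatorname{supp} \Phi$. Applying Green's theorem to the pair $(F_y, \Phi)$ on the slab $\Omega_n \times (0, \varepsilon)$ and exploiting the vanishing of $\Phi$ on the lateral and top faces yields
\begin{equation*}
\int_{\R^d} F(x, y) \varphi(x) \dx = -\int_{\Omega_n \times (0, \varepsilon)} F(x, \eta + y) \Delta \Phi(x, \eta) \dx \deta,
\end{equation*}
and the analogous identity for $-F(x, -y)$ follows by oddness. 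The crucial point is that for $(x, \eta) \in \Omega_n \times (0, \varepsilon)$ one has $d_{\Omega_n}(x, \eta) = \eta$, so the $\omega_{N^*}$-growth of $F$ exactly cancels the $\omega_{N^*}$-decay of $\Delta \Phi$, producing the pointwise bound $|F(\cdot, \eta + y)\, \Delta \Phi(\cdot, \eta)| \leq \|F\| \cdot ||| \Phi |||$. Dominated convergence as $y \to 0^+$ then identifies $\langle \operatorname{bv}(F), \varphi \rangle$ with $-2 \int_{\Omega_n \times (0, \varepsilon)} F \Delta \Phi \dx \deta$, and the required bilinear estimate follows from \eqref{sobolev-bounds}.

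The main technical obstacle, as in Proposition \ref{bv-theorem}, is the weight-sequence bookkeeping mediated by \eqref{M2'double}: condition $[\mathfrak{M}.2]'$ must be iterated to bridge the two-derivative loss from $M$ to $Q$ and from $Q$ to $N$, and this is precisely what $(\mathfrak{M}.2)'$ is designed to permit. In contrast to Proposition \ref{bv-theorem}, no cutoff in the $x$-variable is required, since $\varphi$ already has compact support in $\Omega$, and the $y$-cutoff previously supplied by the auxiliary bump $\rho$ is now furnished directly by the compact-support statement in Proposition \ref{almost-harmonic-compact-1}; consequently the present argument is in fact a mild simplification of its compactly-carried counterpart.
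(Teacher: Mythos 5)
Your proposal is correct and follows exactly the route the paper intends: the paper's own proof of this proposition consists of the single remark that one argues as in Proposition \ref{bv-theorem} but with Proposition \ref{almost-harmonic-compact-1} in place of Proposition \ref{almost-harmonic}, which is precisely what you carry out. The details you supply (the choice $h=k/A$, the identity $d_{\Omega_n}(x,\eta)=\eta$ giving the exact cancellation of the $\omega_{N^*}$ factors, the disappearance of the cutoff $\rho$, and the iteration of $[\mathfrak{M}.2]'$ to obtain \eqref{M2'double}) are all consistent with the paper's proof of Proposition \ref{bv-theorem}, up to harmless bookkeeping of the region on which Green's theorem is applied.
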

\begin{proof}
This can be shown in a similar way to Proposition \ref{bv-theorem} but by using Proposition \ref{almost-harmonic-compact-1} instead of Proposition \ref{almost-harmonic}. 
\end{proof}

Next, we show an ultradistributional version of the Schwarz reflection principle.

\begin{proposition}\label{reflection}
Let $\mathfrak{M}$ be a non-quasianalytic weight matrix satisfying $[\mathfrak{M}.2]'$. Let $\Omega \subseteq \R^d$ be open and let $V \subseteq \R^{d+1}$ be open and symmetric with respect to $y$ such that $V \cap \R^d = \Omega$. Let $F \in \mathcal{H}_{-}(V \backslash \Omega)$ be such that
$$
\lim_{y \to 0^+} \int_{\R^d} (F(x,y) - F(x,-y)) \varphi(x) \dx = 0, \qquad   \varphi \in \mathcal{D}^{[\mathfrak{M}]}(\Omega).
$$
Then, $F$ extends to a harmonic function on $V$.
\end{proposition}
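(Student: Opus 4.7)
The plan is to verify that $F$, extended by zero across $\Omega$, defines an ultradistribution $\widetilde F\in\mathcal{D}^{'[\mathfrak{M}]}(V)$ with $\Delta\widetilde F=0$, and then invoke analytic hypoellipticity of the Laplacian (solutions of $\Delta u=0$ in the ultradistribution sense are real analytic) to conclude that $\widetilde F$ is a real-analytic function on $V$ restricting to $F$ off $\Omega$. The motivating heuristic is the jump formula $\Delta\widetilde F=[F]\,\delta'(y)+[\partial_yF]\,\delta(y)$, whose right-hand side vanishes automatically since $[\partial_yF]=0$ (because $\partial_yF$ is even in $y$) and $[F]=2F(\cdot,0^+)=0$ (by hypothesis).

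For $\Phi\in\mathcal{D}^{[\mathfrak{M}]}(V)$ I would split $\Phi=\Phi_e+\Phi_o$ into parts even and odd in $y$ (both in $\mathcal{D}^{[\mathfrak{M}]}(V)$ since $V$ is $y$-symmetric) and, exploiting oddness of $F$, define
\[
\langle\widetilde F,\Phi\rangle:=2\lim_{\epsilon\to 0^+}\int_{V\cap\{y>\epsilon\}}F(x,y)\,\Phi_o(x,y)\dx\dy.
\]
By Hadamard's lemma $\Phi_o(x,y)=y\,\psi(x,y)$ with $\psi$ ultradifferentiable and $\psi(\cdot,0)=\partial_y\Phi(\cdot,0)$, so the integrand carries an extra $|y|$-factor; combined with uniform boundedness of $\{F(\cdot,y)\}_{0<y<\delta}$ in $\mathcal{D}^{'[\mathfrak{M}]}(\Omega)$ furnished by the uniform boundedness principle applied to the hypothesis, the pairing $y\mapsto\langle F(\cdot,y),\Phi_o(\cdot,y)\rangle$ is $O(y)$ hence integrable down to $y=0$, so the limit exists and yields $\widetilde F\in\mathcal{D}^{'[\mathfrak{M}]}(V)$. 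Green's second identity on the symmetric slabs $V^\pm_\epsilon:=V\cap\{\pm y>\epsilon\}$, using $\Delta F=0$ on $V\setminus\Omega$ and the $y$-parities of $F$ and $\Phi_{e/o}$, then gives
\[
\int_{V\setminus V^0_\epsilon}F\,\Delta\Phi\dx\dy=-2\int_{\R^d}F(x,\epsilon)\,\partial_y\Phi_o(x,\epsilon)\dx+2\int_{\R^d}\Phi_o(x,\epsilon)\,\partial_yF(x,\epsilon)\dx
\]
with $V^0_\epsilon:=V\cap\{|y|\leq\epsilon\}$.

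As $\epsilon\to 0^+$ the first surface integral vanishes because $\partial_y\Phi_o(\cdot,\epsilon)\to\partial_y\Phi(\cdot,0)$ in $\mathcal{D}^{[\mathfrak{M}]}(\Omega)$ and the hypothesis delivers $\int F(\cdot,\epsilon)\,\partial_y\Phi(\cdot,0)\dx\to 0$. For the second, writing $\Phi_o(x,\epsilon)=\epsilon\,\partial_y\Phi(x,0)+O(\epsilon^3)$ turns its leading piece into $\epsilon\,g'(\epsilon)$ with $g(\epsilon):=\int F(x,\epsilon)\,\partial_y\Phi(x,0)\dx$, and the Taylor comparison $g(2\epsilon)=g(\epsilon)+\epsilon\,g'(\epsilon)+\tfrac{1}{2}\epsilon^2\,g''(\xi)$ forces $\epsilon\,g'(\epsilon)\to 0$ once both $g(\epsilon)\to 0$ (direct hypothesis) and $g''(\epsilon)=-\int F(\cdot,\epsilon)\,\Delta_x\partial_y\Phi(\cdot,0)\dx\to 0$ (hypothesis applied to $\Delta_x\partial_y\Phi(\cdot,0)\in\mathcal{D}^{[\mathfrak{M}]}(\Omega)$, via the harmonicity $\partial_y^2F=-\Delta_xF$). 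Iterating the same tauberian argument on higher Taylor terms of $\Phi_o$ disposes of the $O(\epsilon^3)$-remainder, so $\langle\widetilde F,\Delta\Phi\rangle=0$ for every $\Phi\in\mathcal{D}^{[\mathfrak{M}]}(V)$ and analytic hypoellipticity concludes. The technical heart will be the tauberian passage $g,g''\to 0\Rightarrow\epsilon g'\to 0$, where the harmonic equation plays an essential role in converting the hypothesis on $F$ itself into control of its normal derivative.
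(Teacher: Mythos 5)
Your proposal is correct and pursues the same overall strategy as the paper --- extend $F$ to an ultradistribution $\widetilde F$ on $V$ with $\Delta\widetilde F=0$ and invoke elliptic regularity of $\Delta$ in $\mathcal{D}^{'[\mathfrak{M}]}$ --- but the two technical steps are implemented differently. The paper defines $\widetilde F$ as the limit of the shifted truncations $\widetilde F^{\pm}_\eta$ obtained from $F(\cdot,\cdot\pm\eta)$ on the half-slabs $\{\pm y>0\}$, and proves convergence via a Cauchy-net estimate resting on boundedness of $\{\partial_yF(\cdot,y)\}_{0<y<\delta}$ in $\mathcal{D}^{'[\mathfrak{M}]}(\Theta)$; that boundedness is extracted exactly as in your Tauberian step, from $\langle\partial_y^2F(\cdot,y),\varphi\rangle=-\langle F(\cdot,y),\Delta_x\varphi\rangle\to0$ together with the mean value theorem (which even yields that $\langle\partial_yF(\cdot,y),\varphi\rangle$ converges, a bit more than your $\epsilon g'(\epsilon)\to0$). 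A payoff of shifting $F$ rather than shrinking the domain is that in Green's identity the test function is evaluated at $y=0$ on both faces, so the $\partial_yF$ boundary term cancels identically by evenness of $\partial_yF$, leaving only $-\int(F(x,\eta)-F(x,-\eta))\partial_y\varphi(x,0)\,\dx$, which the hypothesis kills directly. Your version, with the domain $\{|y|>\epsilon\}$, the even/odd splitting, and Hadamard's lemma (both admissible here precisely because $[\mathfrak{M}.2]'$ keeps $\Phi_o/y$ and its $y$-Taylor quotients inside the class), instead leaves the genuine term $\int\Phi_o(x,\epsilon)\partial_yF(x,\epsilon)\,\dx$, whose leading piece your comparison $g(2\epsilon)=g(\epsilon)+\epsilon g'(\epsilon)+\tfrac12\epsilon^2 g''(\xi)$ handles correctly. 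The one place your write-up is too loose is the $O(\epsilon^3)$ remainder: ``iterating on higher Taylor terms'' does not literally terminate, since the remainder is an integral rather than a finite sum of such terms. The clean repair is to upgrade your pointwise conclusion $\epsilon\langle\partial_yF(\cdot,\epsilon),\varphi\rangle\to0$ by the Banach--Steinhaus theorem to equicontinuity of the family $\{\epsilon\,\partial_yF(\cdot,\epsilon)\}_{0<\epsilon<\delta}$; the remainder, being $\epsilon^3$ times a bounded family of test functions, then contributes $O(\epsilon^2)\to0$. With that adjustment your argument is complete and equivalent in strength to the paper's.
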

\begin{proof}
Let $\Theta \Subset \Omega$ be arbitrary and choose $r > 0$ such that $\Theta \times (-r,r) \Subset V$. It suffices to show that $F$ extends to a harmonic function on $\Theta \times (-r,r)$. Since $\Delta$ is elliptic, it is enough to show that there is $\widetilde{F} \in  \mathcal{D}^{'[\mathfrak{M}]}(\Theta \times (-r,r))$ such that $\widetilde{F}_{|\Theta \times (-r,r) \backslash \Theta} = F$ and $\Delta \widetilde{F} = 0$ in $ \mathcal{D}^{'[\mathfrak{M}]}(\Theta \times (-r,r))$. To this end, we use the same technique as in \cite[Satz 1.2]{Langenburch}. Let $\varepsilon > 0$ be such that $\Theta \times (-r-\varepsilon,r+\varepsilon) \Subset V$. For $0 < \eta < \varepsilon$ we define $\widetilde{F}^+_\eta, \widetilde{F}^-_\eta \in \mathcal{D}^{'[\mathfrak{M}]}(\Theta \times (-r,r))$ via 
$$
\langle \widetilde{F}^+_\eta, \varphi \rangle := \int_0^r \int_{\Theta} F(x,y+\eta) \varphi(x,y) \dx\dy, \qquad   \varphi \in \mathcal{D}^{[\mathfrak{M}]}(\Theta \times (-r,r)),
$$
and
$$
\langle \widetilde{F}^-_\eta, \varphi \rangle := \int_{-r}^0 \int_{\Theta} F(x,y-\eta) \varphi(x,y) \dx\dy, \qquad   \varphi \in \mathcal{D}^{[\mathfrak{M}]}(\Theta \times (-r,r)).
$$
We claim that  $(\widetilde{F}^\pm_\eta)_{0 < \eta < \varepsilon}$ is a Cauchy net in $\mathcal{D}^{'[\mathfrak{M}]}(\Theta \times (-r,r))$. Before we prove the claim, let us show how it entails the result. Since $\mathcal{D}^{'[\mathfrak{M}]}(\Theta \times (-r,r))$ is complete, there exist $\widetilde{F}^\pm \in \mathcal{D}^{'[\mathfrak{M}]}(\Theta \times (-r,r))$ such that $\lim_{\eta\to0^+} \widetilde{F}^\pm_\eta = \widetilde{F}^\pm$. Set $\widetilde{F} = \widetilde{F}^+ + \widetilde{F}^- \in \mathcal{D}^{'[\mathfrak{M}]}(\Theta \times (-r,r))$. It is clear that $\widetilde{F}_{|\Theta \times (-r,r) \backslash \Theta} = F$. We now show that  $\Delta \widetilde{F} = 0$ in $ \mathcal{D}^{'[\mathfrak{M}]}(\Theta \times (-r,r))$.  Let $\varphi \in  \mathcal{D}^{[\mathfrak{M}]}(\Theta \times (-r,r))$ be arbitrary. Green's theorem (cf.\ the proof of Proposition \ref{bv-theorem}) and the fact that $F$ is odd imply that for all $\varphi \in \mathcal{D}^{[\mathfrak{M}]}(\Theta \times (-r,r))$
\begin{align*}
\langle \Delta \widetilde{F}, \varphi \rangle &= \lim_{\eta \to 0^+}  \int_0^r \int_{\Theta} F(x,y+\eta) \Delta\varphi(x,y) \dx\dy +  \int_{-r}^0 \int_{\Theta} F(x,y-\eta) \Delta\varphi(x,y) \dx\dy \\
&= - \lim_{\eta \to 0^+}  \int_{\Theta} (F(x,\eta) - F(x,-\eta)) \partial_y \varphi(x,0) \dx =0.
\end{align*}
We now show the claim. We only consider $(\widetilde{F}^+_\eta)_{0 < \eta < \varepsilon}$ as  $(\widetilde{F}^-_\eta)_{0 < \eta < \varepsilon}$ can be treated similarly. We have 
\begin{align*}
\lim_{y \to 0^+} \int_{\R^d} \partial^2_y F(x,y) \varphi(x) \dx &= - \lim_{y \to 0^+} \int_{\R^d} \Delta_x F(x,y) \varphi(x) \dx \\
& = -  \lim_{y \to 0^+} \int_{\R^d}  F(x,y) \Delta_x\varphi(x) \dx = 0
\end{align*}
for all $\varphi \in \mathcal{D}^{[\mathfrak{M}]}(\Omega)$. Using the mean-value theorem, we obtain that
$$
\lim_{y \to 0^+} \int_{\R^d} \partial_y F(x,y) \varphi(x) \dx 
$$
exists and is finite for all $\varphi \in \mathcal{D}^{[\mathfrak{M}]}(\Omega)$. Hence, the set $\{ \partial_yF(\, \cdot \,, y) \, | \, 0 < y < r+ \varepsilon\}$ is bounded in $\mathcal{D}'^{[\mathfrak{M}]}(\Theta)$. Let $B$ be an arbitrary bounded subset of $\mathcal{D}^{[\mathfrak{M}]}(\Theta \times (-r,r))$. Then,
$B' = \{ \varphi( \, \cdot \,, y) \, | \,  0 < y <r \}$ is bounded in $\mathcal{D}^{[\mathfrak{M}]}(\Theta)$. For all $0 < \eta, \eta' < \varepsilon$ it holds that
\begin{align*}
\sup_{\varphi \in B}| \langle \widetilde{F}^+_\eta - \widetilde{F}^+_{\eta'} , \varphi \rangle | &= \sup_{\varphi \in B} \left |\int_0^r \int_{\Theta} F(x,y+\eta) - F(x,y+\eta') \varphi(x,y) \dx\dy \right | \\
& = \sup_{\varphi \in B} \left |\int_{\eta'}^\eta \int_0^r \int_{\Theta} \partial_yF(x,y+\lambda) \varphi(x,y) \dx\dy \dlambda \right | \\
&\leq r |\eta - \eta'| \sup_{\psi \in B'} \sup_{0 < y < r+\varepsilon} \left|\int_{\Theta} \partial_yF(x,y) \psi(x) \dx \right|.
\end{align*}
This proves the claim.
\end{proof}

We can now give the representation of  $\mathcal{D}^{'[\mathfrak{M}]}(\Omega)$ by boundary values of harmonic functions.
\begin{theorem}\label{main-theorem-nqa} Let $\mathfrak{M}$ be a non-quasianalytic weight matrix satisfying $[\mathfrak{M}.1]^{*}_{\mathrm{w}}$ and $[\mathfrak{M}.2]'$. Let $\Omega \subseteq \R^d$ be open and let $V \subseteq \R^{d+1}$ be open and symmetric with respect to $y$ such that $V \cap \R^d = \Omega$. Then, the sequence
\begin{equation}
0 \longrightarrow \mathcal{H}_-(V) \longrightarrow  \mathcal{H}^{[\mathfrak{M}]}_-(V \backslash \Omega) \xrightarrow{\phantom,\operatorname{bv}\phantom,}  \mathcal{D}^{'[\mathfrak{M}]}(\Omega) \longrightarrow 0
\label{exact}
\end{equation}
is exact and the boundary value mapping is a topological homomorphism. 
\end{theorem}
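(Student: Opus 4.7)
The plan is to first verify the easy parts of exactness (injectivity of the inclusion of $\mathcal{H}_-(V)$ and identification of the kernel of $\operatorname{bv}$), then tackle the surjectivity of $\operatorname{bv}$ as the principal new ingredient, and finally conclude with the topological homomorphism property via the open mapping theorem. Continuity of $\operatorname{bv}$ itself is Proposition \ref{bv-prop-nqa}.

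The restriction $\mathcal{H}_-(V)\to\mathcal{H}^{[\mathfrak{M}]}_-(V\setminus\Omega)$ is injective, because each connected component of the open set $V$, being a nonempty open subset of $\R^{d+1}$, contains points with $y\neq 0$ and hence meets $V\setminus\Omega$. For $G\in\mathcal{H}_-(V)$, oddness in $y$ gives $G(\cdot,0)\equiv 0$, so $G(x,y)-G(x,-y)=2G(x,y)\to 0$ locally uniformly as $y\to 0^+$, whence $\operatorname{bv}(G)=0$ in $\mathcal{D}'^{[\mathfrak{M}]}(\Omega)$. The converse inclusion $\ker\operatorname{bv}\subseteq\mathcal{H}_-(V)$ is precisely the ultradistributional reflection principle (Proposition \ref{reflection}).

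The crux is surjectivity. Given $f\in\mathcal{D}'^{[\mathfrak{M}]}(\Omega)$, the non-quasianalyticity of $\mathfrak{M}$ supplies a locally finite partition of unity $(\chi_n)_{n\in\N}\subset\mathcal{D}^{[\mathfrak{M}]}(\Omega)$ with $\sum_n\chi_n\equiv 1$ on $\Omega$. Setting $K_n:=\operatorname{supp}\chi_n$ and $f_n:=\chi_n f\in\mathcal{E}'^{[\mathfrak{M}]}(K_n)$, Theorem \ref{main-theorem} yields Poisson transforms $F_n:=P[f_n]\in\mathcal{H}^{[\mathfrak{M}]}_{0,-}(\R^{d+1}\setminus K_n)$ with $\operatorname{bv}(F_n)=f_n$. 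Since $\sum_n F_n$ need not converge in the target space, I would apply a Mittag-Leffler-type correction: fix an exhaustion $V_1\Subset V_2\Subset\cdots$ of $V$ by relatively compact symmetric open sets, reindex the $K_n$'s (using local finiteness) so that $K_n\cap\overline{V_n}=\emptyset$, and for each $n$ use Runge-type approximation for harmonic functions (the ingredient underlying \cite[Theorem 2.4]{Komatsu1991} already invoked in Corollary \ref{Hormander-cor}) to pick $H_n\in\mathcal{H}_-(V)$ approximating $F_n$ on $\overline{V_n}$ sufficiently well in the weighted seminorms of $\mathcal{H}^{[\mathfrak{M}]}_-$, say with error at most $2^{-n}$. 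Then $F:=\sum_n(F_n-H_n)$ converges in $\mathcal{H}^{[\mathfrak{M}]}_-(V\setminus\Omega)$, and since each $H_n\in\mathcal{H}_-(V)$ has zero boundary value, continuity of $\operatorname{bv}$ combined with local finiteness of the partition gives $\operatorname{bv}(F)=\sum_n f_n=f$.

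The topological homomorphism property then follows from the open mapping theorem applied to the induced continuous bijection $\mathcal{H}^{[\mathfrak{M}]}_-(V\setminus\Omega)/\mathcal{H}_-(V)\to\mathcal{D}'^{[\mathfrak{M}]}(\Omega)$, after verifying that both sides have suitable topological structure (of Fréchet type in the Beurling case, of $(LF)$ type in the Roumieu case). The main obstacle I anticipate is the quantitative Runge step in the surjectivity argument: the approximants $H_n$ must retain the exponentially weighted bounds defining $\mathcal{H}^{[\mathfrak{M}]}_-$, which forces one to carefully track how the weight sequence shifts controlled by $[\mathfrak{M}.2]'$ interact with the geometry of the exhaustion, especially in the Roumieu case where one must find a single $M\in\mathfrak{M}$ (or at worst a slowly growing sequence thereof) handling all $n$ uniformly.
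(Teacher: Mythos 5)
Your argument is correct in substance and rests on the same two pillars as the paper's proof: local solvability via the Poisson transform of cut-off pieces of $f$ (Theorem \ref{main-theorem}), plus the Runge approximation theorem for harmonic functions to glue the local solutions, with the kernel identified by Proposition \ref{reflection} and the homomorphism property obtained from De Wilde's open mapping theorem. The only genuine difference is the packaging of surjectivity: the paper fixes an exhaustion $(V_n)_n$ such that $V_{n+2}\setminus\overline{V}_n$ has no component relatively compact in $V_{n+2}$, notes that $F=P[\chi f]$ with $\chi\equiv 1$ on $\Omega_n$ solves the problem on $V_n\setminus\Omega_n$, deduces $\operatorname{Proj}^1(\mathcal{H}_-(V_n))_{n}=0$ from Runge's theorem, and concludes by \cite[Proposition 3.1.8]{Wengenroth}; you carry out the Mittag--Leffler correction by hand with a partition of unity. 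Both work; the abstract route outsources exactly the bookkeeping your version has to do explicitly.

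Three points in your version need repair, none fatal. First, you cannot literally reindex a locally finite family so that $K_n\cap\overline{V_n}=\emptyset$ for every $n$; instead choose $N_k\nearrow\infty$ with $K_m\cap\overline{V_k}=\emptyset$ for $m\ge N_k$ and correct only the tail terms (the finitely many remaining $F_m$ already lie in $\mathcal{H}^{[\mathfrak{M}]}_-(V_k\setminus\Omega_k)$ because $d_{\Omega_k}\le d_{K'}$ for any compact neighborhood $K'\subseteq\Omega_k$ of $K_m$). Second, the Runge step needs the topological hypothesis on the exhaustion stated above, and the approximants $H_n$ must be taken odd in $y$, which one gets by antisymmetrizing over the symmetric set $\overline{V_n}$. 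Third, the ``main obstacle'' you anticipate is not one: since $\omega_{M^*}\ge 0$, the weight $e^{-\omega_{M^*}(1/(hd_S))}$ is bounded by $1$, so for tail terms, which are harmonic in a full neighborhood of $\overline{V_k}$, plain sup-norm approximation with error $2^{-n}$ on $\overline{V_n}$ already gives convergence in every $\mathcal{H}^{M,h}_{\infty,-}(V_k\setminus\Omega_k)$; no interaction with $[\mathfrak{M}.2]'$ is required. Finally, in the Beurling case the source is a projective limit of (LB)-spaces rather than a Fr\'echet space; what matters for the last step is that it is webbed and that $\mathcal{D}^{'[\mathfrak{M}]}(\Omega)$ is ultrabornological, which is exactly the setting of De Wilde's theorem used in the paper.
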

\begin{proof}
The boundary value mapping is well-defined and continuous by Proposition \ref{bv-prop-nqa}, while Proposition \ref{reflection} yields that  $\ker \operatorname{bv} = \mathcal{H}_-(V)$. Next, we show that the boundary value mapping is surjective. To this end, we shall use some basic facts about the derived projective limit functor (see the book \cite{Wengenroth} for more information). Choose a sequence $(V_n)_{n \in \N}$ of relatively compact open sets in $\R^{d+1}$ such that $V_n \Subset V_{n+1}$, $V_{n+2} \backslash \overline{V}_n$ has no connected component that is relatively compact in $V_{n+2}$ and $V = \bigcup_{n \in \N} V_n$. Set $\Omega_n = V_n \cap \R^d$. We need to show that the mapping
$$
\operatorname{bv}: \mathcal{H}^{[\mathfrak{M}]}_-(V \backslash \Omega) = \operatorname{Proj} \, (\mathcal{H}^{[\mathfrak{M}]}_-(V_n \backslash \Omega_n))_{n \in \N} \rightarrow  \mathcal{D}^{'[\mathfrak{M}]}(\Omega) = \operatorname{Proj} \, (\mathcal{D}^{'[\mathfrak{M}]}(\Omega_n))_{n \in \N} 
$$
is surjective.
 Consider the following spectrum of short exact sequences
\begin{center}
\begin{tikzpicture}
  \matrix (m) [matrix of math nodes, row sep=2em, column sep=2em]
    {
    0 & \mathcal{H}_-(V_1) & \mathcal{H}^{[\mathfrak{M}]}_-(V_1 \backslash \Omega_1)  & \mathcal{D}^{'[\mathfrak{M}]}(\Omega_1)   \\
   0 & \mathcal{H}_-(V_2) & \mathcal{H}^{[\mathfrak{M}]}_-(V_2\backslash \Omega_2) & \mathcal{D}^{'[\mathfrak{M}]}(\Omega_2)   \\ 
        & \vdots & \vdots & \vdots   \\ };
  
  { [start chain] \chainin (m-1-1);
\chainin (m-1-2);
\chainin (m-1-3)[join={node[above,labeled] {}}];
\chainin (m-1-4)[join={node[above,labeled] {\operatorname{bv}}}];

}
  
  { [start chain] \chainin (m-2-1);
\chainin (m-2-2);
\chainin (m-2-3)[join={node[above,labeled] {}}];
\chainin (m-2-4)[join={node[above,labeled] {\operatorname{bv}}}];
}

{ [start chain] \chainin (m-3-2);
\chainin (m-2-2);
\chainin (m-1-2);
 }

{ [start chain] \chainin (m-3-3);
\chainin (m-2-3);
\chainin (m-1-3);
 }

{ [start chain] \chainin (m-3-4);
\chainin (m-2-4);
\chainin (m-1-4);
 }
\end{tikzpicture}
\end{center}
The Runge approximation theorem for harmonic functions \cite[Theorem 2.3]{Komatsu1991} (see also \cite[Theorem 4.4.5]{Hormander}) and \cite[Theorem 3.2.1]{Wengenroth} yield that $\operatorname{Proj}^1 \, (\mathcal{H}_-(V_n))_{n \in \N} = 0$. By \cite[Proposition 3.1.8]{Wengenroth}, it  therefore suffices to show that for each $f \in \mathcal{D}^{'[\mathfrak{M}]}(\Omega_{n+1})$ there is $F \in \mathcal{H}^{[\mathfrak{M}]}_-(V_n \backslash \Omega_n)$ such that $\operatorname{bv}(F) = f_{|\Omega_n}$. Choose $\chi \in \mathcal{D}^{[\mathfrak{M}]}(\Omega_{n+1})$ such that $\chi \equiv 1$ on $\Omega_n$. Then, $\chi f \in \mathcal{E}^{'[\mathfrak{M}]}(\overline{\Omega}_{n+1})$. Set $F = P[\chi f]$. Theorem \ref{main-theorem} implies that  $F \in \mathcal{H}^{[\mathfrak{M}]}_-(\R^{d+1} \backslash \overline{\Omega}_{n+1}) \subseteq \mathcal{H}^{[\mathfrak{M}]}_-(V_n \backslash \Omega_n)$ and $\operatorname{bv}(F) = f_{|\Omega_n}$. Finally, the boundary value mapping is a topological homomorphism by De Wilde's open mapping theorem.
\end{proof}
\begin{remark}\label{comparison}
Theorem \ref{main-theorem-nqa} particularly applies to $\mathcal{D}^{'[M]}(\Omega)$, where $M$ is a non-quasianalytic weight sequence satisfying $(M.2)'$ and $(M.1)^{*}_{\mathrm{w}}$. For weight sequences satisfying the more restrictive assumptions $(M.2)$ and $(M.3)$ \cite{Komatsu} this result also follows from the work of Komatsu \cite{Komatsu1991}. He had to assume these stronger conditions because he employed the parametrix method.
\end{remark}

 Finally, we can also represent $\mathcal{D}^{'[\omega]}(\Omega)$ via boundary values of harmonic functions. Let $\omega$ be a non-quasianalytic weight function. Let $\Omega \subseteq \R^d$ be open and let $V \subseteq \R^{d+1}$ be open and symmetric with respect to $y$ such that $V \cap \R^d = \Omega$. The space $\mathcal{H}^{[\mathfrak{\omega}]}_-(V \backslash \Omega)$ is defined in the natural way.
\begin{corollary}
Let $\omega$ be a non-quasianalytic weight function satisfying $(\alpha_0)$. Let $\Omega \subseteq \R^d$ be open and let $V \subseteq \R^{d+1}$ be open and symmetric with respect to $y$ such that $V \cap \R^d = \Omega$. Then, the sequence
$$
0 \longrightarrow \mathcal{H}_-(V) \longrightarrow  \mathcal{H}^{[\omega]}_-(V \backslash \Omega) \xrightarrow{\phantom,\operatorname{bv}\phantom,}  \mathcal{D}^{'[\omega]}(\Omega) \longrightarrow 0
$$
is exact and the boundary value mapping is a topological homomorphism.
\end{corollary}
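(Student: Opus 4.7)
The plan is to reduce to Theorem \ref{main-theorem-nqa} via the weight matrix $\mathfrak{M}_\omega$ associated to $\omega$, exactly in the spirit of the earlier corollaries in Sections \ref{sect-ah} and \ref{sect-bv} that infer weight-function results from weight-matrix ones.

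First I would replace $\omega$ by a suitable weight matrix. Since $\omega$ is non-quasianalytic, $\omega(t)=o(t)$, and as noted after \eqref{star-ws} the matrix $\mathfrak{M}_\omega$ is then non-quasianalytic. Applying Lemma \ref{reduction}$(ii)$ produces a weight matrix $\mathfrak{M}$ with $\mathfrak{M}_\omega [\approx]\mathfrak{M}$ satisfying $[\mathfrak{M}.1]^*_{\mathrm{w}}$, $[\mathfrak{M}.2]'$, and $(NA)$; Lemma \ref{equivalent} then lets me further arrange that $\mathfrak{M}$ is non-quasianalytic (this is what is needed to invoke Theorem \ref{main-theorem-nqa}). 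By Lemma \ref{reduction}$(i)$ and the discussion of the $[\approx]$-relation in Sections \ref{wm-sect} and on $\mathcal{D}^{[\mathfrak{M}]}$, we have $\mathcal{E}^{[\omega]}(\Omega)=\mathcal{E}^{[\mathfrak{M}]}(\Omega)$ and $\mathcal{D}^{[\omega]}(\Omega)=\mathcal{D}^{[\mathfrak{M}]}(\Omega)$ as locally convex spaces, and therefore $\mathcal{D}^{\prime[\omega]}(\Omega)=\mathcal{D}^{\prime[\mathfrak{M}]}(\Omega)$.

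The key step, and the one that requires a little bookkeeping, is to identify the harmonic function spaces. The definitions use the weights $e^{\frac{1}{h}\omega^\star(h\, d_S(x,y))}$ and $e^{\omega_{M^*}(1/(h\, d_S(x,y)))}$ respectively, and the four inequalities of Lemma \ref{reduction-1} are precisely what is needed to convert between them. Concretely, in the Beurling case: given $F\in\mathcal{H}^{(\omega)}_{\infty,-}(V_n\setminus\Omega_n)$ with some exponent $k$, part $(ii)$ of Lemma \ref{reduction-1} produces $M\in\mathfrak{M}_\omega$ and $C,h>0$ so that $F\in\mathcal{H}^{M,h}_{\infty,-}(V_n\setminus\Omega_n)$; conversely, part $(iii)$ takes any such $M,h$ back to a valid $k>0$. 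In the Roumieu case, parts $(i)$ and $(iv)$ play the symmetric roles. Together with the $[\approx]$-invariance of the matrix spaces already noted in the paragraph defining $\mathcal{H}^{[\mathfrak{M}]}_-(V\setminus\Omega)$, this yields the identification
$$
\mathcal{H}^{[\omega]}_-(V\setminus\Omega)=\mathcal{H}^{[\mathfrak{M}_\omega]}_-(V\setminus\Omega)=\mathcal{H}^{[\mathfrak{M}]}_-(V\setminus\Omega)
$$
as locally convex spaces.

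With these identifications in place, the exact sequence and the fact that $\operatorname{bv}$ is a topological homomorphism follow at once from Theorem \ref{main-theorem-nqa} applied to $\mathfrak{M}$, because the kernel term $\mathcal{H}_-(V)$ is intrinsic and does not depend on the ultradifferentiable structure. The only mildly technical obstacle is the weight-equivalence argument in the previous paragraph, which is otherwise entirely routine given Lemma \ref{reduction-1}; no new analytic input is needed.
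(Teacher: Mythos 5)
Your proposal is correct and follows essentially the same route as the paper, which likewise deduces this corollary by reducing to Theorem \ref{main-theorem-nqa} through Lemma \ref{reduction} and the weight conversions of Lemma \ref{reduction-1}. One minor point: Lemma \ref{equivalent} only \emph{preserves} non-quasianalyticity rather than producing it, so the non-quasianalyticity of the replacement matrix $\mathfrak{M}$ should instead be read off from the non-triviality of $\mathcal{D}^{[\mathfrak{M}]}(\Omega)=\mathcal{D}^{[\mathfrak{M}_\omega]}(\Omega)$ as discussed in Section \ref{wm-sect}; this is a one-line repair, not a gap.
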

\begin{proof}
This follows from Lemma \ref{reduction}, Lemma \ref{reduction-1},  and Theorem \ref{main-theorem-nqa}.
\end{proof}







\end{document}